\newcommand{\TODO}[3]{\hbox to 0pt{\textcolor{#1}{$^\bullet$}}\marginpar{\footnotesize \textcolor{#1}{\begin{flushleft}#2: #3\end{flushleft}}}}
\newcommand\restr[2]{{
  \left.\kern-\nulldelimiterspace 
  #1 
  \vphantom{\big|} 
  \right|_{#2} 
  }}
\theoremstyle{definition}
\newtheorem{theorem}{Theorem}[section]
\newtheorem{lemma}[theorem]{Lemma}
\newtheorem{proposition}[theorem]{Proposition}
\newtheorem{corollary}[theorem]{Corollary}
\newtheorem{definition}[theorem]{Definition}
\theoremstyle{remark}
\newtheorem{remark}[theorem]{Remark}
\crefname{figure}{\textbf{Figure}}{\textbf{Figures}}
\crefname{section}{\textbf{Section}}{\textbf{Sections}}
\crefname{appendix}{\textbf{Appendix}}{\textbf{Appendices}}
\crefname{equation}{\textbf{Equation}}{\textbf{Equations}}
\crefname{remark}{\textbf{Remark}}{\textbf{Remarks}}
\crefname{theorem}{\textbf{Theorem}}{\textbf{Theorems}}
\crefname{definition}{\textbf{Definition}}{\textbf{Definitions}}
\crefname{lemma}{\textbf{Lemma}}{\textbf{Lemmas}}
\crefname{proposition}{\textbf{Proposition}}{\textbf{Propositions}}
\crefname{corollary}{\textbf{Corollary}}{\textbf{Corollaries}}
\crefname{table}{\textbf{Table}}{\textbf{Tables}}
\newcommand {\mm}[1] {\ifmmode{#1}\else{\mbox{\(#1\)}}\fi}
\newcommand{\Rspace}        {\mm{{\mathbb R}}}
\newcommand{\Lcal}        {\mm{{\mathcal L}}}
\newcommand{\Bcal}        {\mm{{\mathcal B}}}
\newcommand{\Hom}           {\mm{\mathrm{Hom}}}
\newcommand{\Hgroup}           {H}
\newcommand{\diag}        {\mm{\mathrm{diag}}}
\newcommand{\kernel}        {\mm{\mathrm{ker}}\,}
\newcommand{\image}        {\mm{\mathrm{im}}\,}
\newcommand{\adjoint}{\ast}
\newcommand{\dual}{\vee}
\newcommand{\Rcal}{\mathcal{R}}
\newcommand{\caB}{\mathcal{B}}
\newcommand{\chainOne}{\alpha}
\newcommand{\chainTwo}{\beta}
\newcommand{\ERoperator}{\mathcal{T}}
\newcommand{\nullity}{\mathrm{null}}
\newcommand{\cochainLup}[1]{\mathcal{L}_{\mathrm{up}}^{#1}}
\newcommand{\cochainLdown}[1]{\mathcal{L}_{\mathrm{down}}^{#1}}
\newcommand{\chainLup}[1]{\mathcal{L}^{\mathrm{up}}_{#1}}
\newcommand{\chainLdown}[1]{\mathcal{L}^{\mathrm{down}}_{#1}}
\newcommand{\Lupmatrix}[2]{L_{#1,\mathrm{up}}^{\mathrm{#2}}}
\newcommand{\Ldownmatrix}[2]{L_{#1,\mathrm{down}}^{\mathrm{#2}}}
\title{Effective Resistance in Simplicial Complexes as Bilinear Forms: Generalizations and Properties}
\author[1]{Inés García-Redondo}
\author[2]{Claudia Landi}
\author[3]{Sarah Percival}
\author[4]{Anda Skeja}
\author[5]{Bei Wang}
\author[6]{\\Ling Zhou}
\affil[1]{Artificial Intelligence for Data-Oriented Science Lab, University of Fribourg, Switzerland}
\affil[2]{DISMI, University of Modena and Reggio Emilia, Italy}
\affil[3]{Department of Mathematics and Statistics, University of New Mexico, USA}
\affil[4]{Department of Mathematics, Uppsala University, Sweden}
\affil[5]{School of Computing, SCI Institute, University of Utah, USA}
\affil[6]{Department of Mathematics, Duke University, USA}
\begin{document}
\date{}
\maketitle

\begin{abstract}
The concept of effective resistance, originally introduced in electrical circuit theory, has been extended to the setting of graphs by interpreting each edge as a resistor. In this context, the effective resistance between two vertices quantifies the total opposition to current flow when a unit current is injected at one vertex and extracted at the other. Beyond its physical interpretation, the effective resistance encodes rich structural and geometric information about the underlying graph: it defines a metric on the vertex set, relates to the topology of the graph through Foster’s theorem, and determines the probability of an edge appearing in a random spanning tree. Generalizations of effective resistance to simplicial complexes have been proposed in several forms, often formulated as matrix products of standard operators associated with the complex.

In this paper, we present a twofold generalization of the effective resistance. First, we introduce a novel, basis-independent bilinear form, derived from an algebraic reinterpretation of circuit theory, that extends the classical effective resistance from graphs. Second, we extend this bilinear form to simplices, chains, and cochains within simplicial complexes.
This framework subsumes and unifies all existing matrix-based formulations of effective resistance. Moreover, we establish higher-order analogues of several fundamental properties known in the graph case: (i) we prove that effective resistance induces a pseudometric on the space of chains and a metric on the space of cycles, and (ii) we provide a generalization of Foster’s Theorem to simplicial complexes.

\end{abstract}

\tableofcontents

\section{Introduction}
\label{sec:introduction}

The concept of effective resistance, originating from electrical network theory, models each edge in a graph as a resistor with a given resistance. 
The effective resistance between two vertices reflects the total resistance to the flow of electrical current when one unit of current is injected at one vertex and extracted at the other, assuming unit resistance across all edges. 
This measure reflects both the direct and indirect paths between the vertices, incorporating the overall structure of the graph. 
It enjoys a number of important geometric and algebraic properties; see a recent review by Devriendt~\cite{Devriendt2022b}. In particular, the effective resistance is proportional to the commute time between a  pair of vertices~\cite{ChandraRaghavanRuzzo1996} (i.e.,~the expected length of a random walk from one vertex to another and back); its maximum value characterizes the cover time~\cite{ChandraRaghavanRuzzo1996} (i.e.,~the expected length of a random walk visiting all vertices); and it determines a metric between the vertices of a graph~\cite{GvishianiGurvich1987,KleinRandic1993}. In addition, the effective resistance between the endpoints of an edge is proportional to the probability that such edge appears in a random spanning tree~\cite{Kirchhoff1847}. 

Effective resistance in graphs has been widely applied in graph theory, network science, and machine learning due to its intrinsic connection with structural properties. 
For instance, effective resistance has been used to measure centrality~\cite{Newman2005}, characterize chemical networks~\cite{BabicKleinLukovits2002}, study graph sparsification~\cite{SpielmanSrivastava2011},  node clustering (community detection)~\cite{AlevAnariLau2018} and label propagation~\cite{OstingPalandeWang2020}.   

Extensions of effective resistance to simplicial complexes have been proposed in several distinct formulations. A notion of effective resistance has been introduced by Kook and Lee~\cite{KookLee2018} in a simplicial complex equipped with positive resistances on its top-dimensional faces. A different formulation has been defined by Osting, Palande and Wang~\cite{OstingPalandeWang2020}, as a means for sparsifying a simplicial complex, generalizing graph sparsification. {Their approach is more general, as it applies not only to top-dimensional faces but also to simplices of arbitrary dimension (together with their associated weights).}
Black and Maxwell~\cite{BlackMaxwell2021} discussed effective resistance of cycles in a simplicial complex. 
In all of these approaches, the effective resistance is defined from matrices obtained as a product of various matrix operators acting on the simplicial complexes.

In this paper, we generalize the notion of effective resistance by providing a basis-free definition, from which the matrix representations above follow, and we describe its theoretical properties. Our contributions are as follows:
\begin{itemize}[noitemsep,leftmargin=*]
\item We introduce the \emph{effective resistance bilinear form}, which extends the notion of effective resistance from edges in graphs to simplices, chains, and cochains in simplicial complexes. {See \cref{sec:resistance-operator} and \cref{sec:ER simplicial bilinear-form}.}
\item We show that such a bilinear form unifies existing theory of effective resistance on simplicial complexes (i.e.,~\cite{BlackMaxwell2021,KookLee2018,OstingPalandeWang2020}), as all formulations can be retrieved from appropriate choices of bases in the chain or cochain spaces. {See \cref{sec:ER simplicial matrix-form}.}
\item We provide a partial extension of the result that effective resistance is a metric on graphs, showing that effective resistance induces a pseudometric on the space of $p$-chains. {See \cref{sec:metric-properties}.} 
\item We generalize Foster's Theorem~\cite{Foster1949} from graphs to simplicial complexes, establishing a deep connection between the structure of a complex and our notion of effective resistance. {See \cref{sec:fosters}.} 
\end{itemize}

\section{Related Work}
\label{sec:related-work}

Effective resistance has been extensively studied in graph theory and related fields as a tool for analysing graph connectivity, with deep connections to random walks, electrical flows, and spectral properties. In recent years, this concept has been extended to more general combinatorial structures such as hypergraphs and simplicial complexes. In what follows, we first review key properties and applications of effective resistance in graphs, and then explore how it has been generalized to these higher-dimensional settings.
We omit detailed definitions here and refer the reader to the original papers for full treatment. 

\subsection{Properties of Effective Resistance for Graphs}
Effective resistance originated in circuit theory~\cite{Kirchhoff1847}, but has since become a widely-studied graph-theoretic concept due to its deep connections with the Laplacian and spanning trees. 
In addition to the properties mentioned in \cref{sec:introduction}, it can also be characterized via current flow according to Thomson’s principle, which underlies Rayleigh’s monotonicity law: increasing edge conductance reduces the effective resistance between any pair of nodes~\cite{DoyleSnell1984}.
A geometric interpretation, developed by Fiedler~\cite{fiedler2011matrices} and Sharpe and Moore~\cite{SharpeMoore1968}, and later synthesized in the survey by Devriendt~\cite{Devriendt2022b}, shows that the square root of effective resistance defines an embedding of the graph into an Euclidean simplex, offering a geometric perspective on graph structure. Related properties include the fact that the Schur complement of a Laplacian matrix yields another Laplacian and corresponds geometrically to a map between simplices. Fiedler~\cite{Fiedler1973} also introduced a matrix identity connecting the Laplacian and the effective resistance matrix, bringing together algebraic and geometric views of graph structure.

\subsection{Applications of Effective Resistance for Graphs}
Effective resistance has found broad applications across a range of domains in graph theory and network science. One important use is as a measure of network robustness~\cite{Ellens2011}: it captures how well-connected a network is by accounting for both the number of alternative paths between nodes and their lengths, and it decreases as new edges are added. It also admits a natural interpretation via random walks: lower resistance between two nodes corresponds to shorter expected commute times, signaling greater resilience to node or edge failures. Effective resistance also plays a key role in graph sparsification~\cite{SpielmanSrivastava2011}. Using effective resistance as a sampling guide enables the construction of spectral sparsifiers that improve over prior approaches by yielding subgraphs with fewer edges that preserve spectral properties with high accuracy. Effective resistance has also found applications in graph clustering \cite{AlevAnariLau2018}, where a graph can be partitioned into subgraphs with bounded effective resistance diameter, discarding only a small fraction of the edges. Other related applications of effective resistance include computing maximum flows \cite{ChristianoKelnerMadry2011}, finding thin trees \cite{AnariOveis2015}, and generating random spanning trees \cite{KelnerAleksander2009,MadryStraszakTarnawski2014}. More recently, effective resistance has been used as a positional encoding technique in graph neural networks and graph transformers. A key challenge in this area is identifying which positional encodings provide the greatest expressive power. Notably, graph transformers that use effective resistance as a positional encoding have been shown to identify cut vertices, whereas those relying on shortest path encodings have not shown this capability~\cite{ZhangLuoWang2023}. Another recent application of effective resistance is in forms of queries to graph inference problems \cite{Bennett2025}. The latter includes graph reconstruction, verification, and
property testing as special cases. 

\subsection{Effective Resistance Formulations for Hypergraphs}

Aghdaei and Feng~\cite{AghdaeiFeng2022} extended the concept of effective resistance from graphs to hypergraphs using the nonlinear Laplacian framework of Chan et al.~\cite{Chan2018}. They defined a nonlinear quadratic form on the hypergraph that quantifies the weighted potential variation across its hyperedges. The effective resistance of a hyperedge is then characterized through an optimization problem comparing the potential difference between two of its vertices with this quadratic form. 
Building on this idea, Aghdaei and Feng developed a scalable spectral coarsening method that estimates hyperedge resistances efficiently and clusters nodes connected by low-resistance hyperedges. They further introduced a node weight propagation scheme to extend the approach to a multilevel decomposition framework.

\subsection{Effective Resistance Formulations for Simplicial Complexes}

Kook and Lee \cite{KookLee2018} introduced a notion of effective resistance for a simplicial network, where the network is given by a simplicial complex together with resistances assigned to its top-dimensional simplices. They considered a current generator in the form of a $d$-dimensional simplex attached to the complex, producing an extended cell complex. Using a combinatorial Hodge Laplacian approach, they assigned a unique harmonic class to the generator simplex. This harmonic class induces both the current and the voltage on the extended complex, and satisfies a higher-dimensional analogue of Thomson’s principle. The effective resistance is then defined as the ratio between the voltage and current components associated with the generator simplex.

Another contribution of Kook and Lee \cite{KookLee2018} is a combinatorial interpretation of effective resistance, obtained through a connection with weighted high-dimensional tree numbers \cite{DuvalKlivansMartin2009}. This extends the classical correspondence between effective resistance and spanning trees in one-dimensional networks. They also provided a matrix-based extension of Foster’s theorem.

Osting et al.~\cite{OstingPalandeWang2020} introduced effective resistance for simplicial complexes in the context of sparsification, using a modified formulation of the up Laplacian. Since the up Laplacian is not symmetric (see discussions in \cref{sec:laplacian-matrix}), they constructed a symmetric, positive semi-definite matrix to define effective resistance. In this framework, the effective resistance of a simplex is determined from the diagonal entries of a derived matrix. Another contribution of Osting et al. \cite{OstingPalandeWang2020} is an extension of the Cheeger constant introduced by Gundert and Szedlák for unweighted simplicial complexes \cite{GundertSzedlak2014}. They further established that the corresponding Cheeger inequality, which links the first non-zero eigenvalue of the weighted up Laplacian to the Cheeger constant, continues to hold under sparsification.

Hansen and Ghrist \cite{HansenGhrist2019} defined effective resistance in the setting of cellular (co)sheaves, which generalize simplicial complexes. Their formulation characterizes effective resistance between homologous cycles in terms of an optimization problem, and it applies equally to weighted sheaves since every weighted sheaf has a canonical dual weighted cosheaf. They also explored applications of this framework to sparsification. 

Black and Maxwell \cite{BlackMaxwell2021} defined effective resistance for null-homologous cycles in simplicial complexes, in a way that is equivalent to the framework of Hansen and Ghrist \cite{HansenGhrist2019}. Their formulation generalizes the graph case and is consistent with other higher-dimensional definitions \cite{KookLee2018, OstingPalandeWang2020, HansenGhrist2019}, though it provides limited intuition on its own. To address this, they showed that effective resistance can be equivalently described through chains with a given boundary. They further extended classical properties, including the parallel and series laws and Rayleigh’s monotonicity principle. In addition, they proposed a quantum algorithm for testing whether a cycle is null-homologous in a simplicial complex, using a parametrization involving both effective resistance and effective capacitance.

\section{Background}
\label{sec:background}

In this section, we review foundational definitions from linear algebra and illustrate their applications to chain and cochain spaces. This establishes the essential framework and notations used throughout this work. We assume a basic understanding of simplicial homology, see~\cite{Munkres1984} for an introductory text and \cref{sec:notations} for a summary of notations.
 
\subsection{A Primer on Linear Algebra}
\label{sec:linear-algebra}

\paragraph{Duals.}
Let \(U\) be a vector space over a field \(\Bbbk\). The \emph{dual vector space}, denoted \(U^\dual\), is given by the linear forms on \(U\), that is, \(U^\dual \coloneqq \Hom_{\Bbbk}(U, \Bbbk)\). Given a linear map \(f: U \to V\) between vector spaces, the \emph{dual map} \[f^\dual: V^\dual \to U^\dual\] maps each linear form \(\varphi\) on \(V\) to the linear form \(f^\dual (\varphi)\) on \(U\)  defined for all \(u \in U\) as \[f^\dual (\varphi) (u) \coloneqq \varphi (f(u)).\]
Let \(\mathcal{B}_U = \{u_1, \dots, u_n\}\) be a basis of a finite-dimensional vector space \(U\). We can define the following basis on the dual space \(U^\dual\), known as the \emph{dual basis}, denoted by \(\mathcal{B}_U^\dual = \{u_1^\dual, \dots, u_n^\dual\}\). Each element \(u_i^\dual \in U^\dual\) is a linear form defined by its action on the basis vectors of \(U\) according to the rule:
\[u_i^\dual(u_j) = \delta_{ij},\]
where \(\delta_{ij}\) is the Kronecker delta. This defines an isomorphism \(U\cong U^\dual\) which is not canonical, i.e.,~it depends on the choice of the basis \(\Bcal_U\).

\paragraph{Adjoints.}
If we additionally assume that \(U\) and \(V\) have inner products \(\langle \cdot, \cdot \rangle_{U}\) and \(\langle \cdot, \cdot \rangle_{V}\), respectively, given a linear map \(f: U \to V\) between them, we can define the \emph{adjoint map}
\[f^\adjoint: V \to U\]
as the map satisfying
\[\langle u, f^\adjoint(v)\rangle_U = \langle f(u), v\rangle_V\]
for all \(u \in U\) and \(v \in V\).

\paragraph{Matrix representations.}
Given a linear map \(f: U \to V\) and bases \(\Bcal_U\) of \(U\) and \(\Bcal_V \) of \(V\), we denote by \([f]^{\Bcal_V}_{\Bcal_U}\) the matrix representation of \(f\) with respect to these bases, which will be a matrix of dimension \(\dim(V) \times \dim(U)\). 
If instead we have a bilinear form \(\varphi: U \times V \to \Rspace\), we will denote its matrix representation as \([\varphi]_{\Bcal_U, \Bcal_V}\), where we omit the basis in the target space $\Rspace$ as it will always be the basis given by \(\{1\}\).

\paragraph{Matrix representation of the adjoint map.}
Let \(f: U \to V\) be a linear map between the vector spaces \(U\) and \(V\) with inner products \(\langle \cdot, \cdot \rangle_{U}\) and \(\langle \cdot, \cdot \rangle_{V}\), respectively. Choose bases \(\Bcal_U\) and \(\Bcal_V\) in \(U\) and \(V\) respectively, and denote \([f]_{\Bcal_U}^{\Bcal_V}=A\), \([\langle \cdot, \cdot\rangle_U]_{\Bcal_U, \Bcal_U} = M\) and \([\langle \cdot, \cdot\rangle_V]_{\Bcal_V, \Bcal_V} = N\). It is easy to check that the matrix representation of the adjoint map \(f^\adjoint: V \to U\) with respect to these bases is then given by
\begin{equation}
    \label{eq:adjoint-matrix-expression}
    [f^\adjoint]^{\Bcal_U}_{\Bcal_V} = M^{-1} A^\top N.
\end{equation}

\subsection{Chain and Cochain Inner Product Spaces}
\label{sec:linear-algebra-chains-cochains}

Let \(K\) denote a \(d\)-dimensional finite simplicial complex, and for \(0 \leq p \leq d\), let \(K_p\) be the set of \(p\)-simplices in \(K\). We denote by \(n_p\coloneqq |K_p|\) the number of elements in \(K_p\). We pick an \emph{orientation} for $K$ by choosing an ordering of its vertices (called the \emph{standard orientation}).

\paragraph{Chain inner product space.} 
Fixing a field \(\Bbbk\) (typically, \(\Bbbk=\Rspace\)), the \emph{chain group} $C_p(K, \Rspace)$ is defined as the vector space spanned by the oriented simplices in \(K_p\).  We just write \(C_p(K)\) when the field is clear from the context. Chain groups are endowed with \emph{boundary maps} \(\partial_p : C_p(K) \to C_{p-1}(K) \) (see \cite{Hatcher2002} for more details on these definitions). 

When, in addition, we assume that \(K\) is \emph{weighted}, that is, we have a \emph{weight function} \(w_p: K_p \to \Rspace_{>0}\) for any \(p\ge 0\), we can translate this extra information to the chain spaces by endowing them with the following inner products \(\langle \cdot, \cdot\rangle_{C_p} : C_p(K) \times C_p(K) \to \Rspace_{\geq0}\) defined on the \(p\)-simplices $\sigma, \, \tau \in K_p$ as
\[
\langle \sigma, \tau\rangle_{C_p} \coloneqq
    \begin{cases}
        w_p(\sigma)^{-1}, & \text{if }\sigma = \tau, \\
        0, & \text{otherwise}; 
    \end{cases}
\]
and extended linearly to general chains.

We denote by $\partial_p^\adjoint$ the adjoint of the boundary operator $\partial_p$ with respect to the chosen inner product. 
In summary, we have the chain complex:
\begin{align}
\label{eq:chain_complex}
\cdots \ \ 
C_{p+1}(K) 
\underset{\partial_{p+1}}{\overset{\partial^\adjoint_{p+1}}{\leftrightarrows}}
C_{p}(K) 
\underset{\partial_p}{\overset{\partial^\adjoint_p}{\leftrightarrows}}
C_{p-1}(K) 
\ \ \cdots
\end{align}

The kernel of $\partial_p$ is called the $p$-th \emph{cycle group}, and its elements are called the $p$-th \emph{cycles}.
The image of $\partial_{p+1}$ is called the $p$-th \emph{boundary group}, and its elements are called the $p$-th \emph{boundaries}.
The $p$-th homology group of $K$, denoted $\Hgroup_p(K)$, is defined as the quotient of the $p$-th cycle group by the $p$-th boundary group.
The dimension of $\Hgroup_p(K)$ is called the $p$-th \emph{Betti number}, denoted $\beta_p(K)$. 

\paragraph{Cochain inner product space.} 
Similarly,  we can define \emph{cochain spaces} \(C^p(K, \Bbbk)= C_p(K, \Bbbk)^\dual\) as the dual vector spaces of the chain spaces, also denoted \(C^p(K)\) if the field is clear from the context. The \emph{coboundary maps} \(\delta_p: C^p(K)\to C^{p+1}(K)\) are then defined as the duals of the boundary maps: \(\delta_p = \partial_{p+1}^\dual\). 
We also have an inner product in the space of cochains
\(\langle \cdot, \cdot\rangle_{C^p} : C^p(K) \times C^p(K) \to \Rspace_{\geq0}\)
defined as
\begin{equation}\label{eq:inner product cochain}
\langle \sigma^\dual, \tau^\dual\rangle_{C^p} \coloneqq
    \begin{cases}
        w_p(\sigma), & \text{if }\sigma = \tau, \\
        0, & \text{otherwise}; 
    \end{cases}
\end{equation}
for all \(\sigma, \tau \in K_p\) and extended linearly.

We denote by $\delta^\adjoint_{p}$ the adjoint of the coboundary operator $\delta_p$ with respect to the chosen inner product. 
In summary, we have the cochain complex:
\begin{align}
\label{eq:cochain_complex}
\cdots \ \ 
C^{p+1}(K) 
\underset{\delta^\adjoint_p}{\overset{\delta_p}{\leftrightarrows}}
C^{p}(K) 
\underset{\delta^\adjoint_{p-1}}{\overset{\delta_{p-1}}{\leftrightarrows}}
C^{p-1}(K) 
\ \ \cdots
\end{align}

The kernel of $\delta^p$ is called the $p$-th \emph{cocycle group}, and its elements are called the $p$-th \emph{cocycles}.
The image of $\delta^{p-1}$ is called the $p$-th \emph{coboundary group}, and its elements are called the $p$-th \emph{coboundaries}.
The $p$-th cohomology group of $K$, denoted $H^p(K)$, is defined as the quotient of the $p$-th cocycle group by the $p$-th coboundary group.
With real coefficients, the cohomology groups are simply the dual vector spaces of the homology groups \(\Hgroup^p (K) = \Hom_{\Rspace} \left(\Hgroup_p(K), \Rspace\right)\). Consequently, the dimension of $H^p(K)$ coincides with the $p$-th Betti number.

\paragraph{Musical isomorphisms.} 

As noted in \cref{sec:linear-algebra}, identifying a space with its dual via a chosen basis yields a non-canonical isomorphism. In contrast, the inner product on \(C_p(K)\) defined above induces the musical (flat and sharp) maps, providing canonical isomorphisms \(C_p(K) \cong C^p(K)\) (cf.~\cite{VazDaRocha2016}); these correspond to the vertical arrows in diagram~\eqref{eq:combined diagram}.

\begin{definition}[{\bf Musical isomorphisms}]
\label[definition]{def:flat-sharp iso}
    The \emph{flat isomorphism} \( \flat_p \colon C_p(K) \to C^p(K) \) sends each chain $\chainOne \in C_p(K)$ to the linear form 
    \[\flat_p(\chainOne) \coloneqq \langle \chainOne, \cdot \rangle_{C_p}.\] 
    Conversely, the \emph{sharp isomorphism} \( \sharp_p \colon C^p(K) \to C_p(K) \) sends each cochain \( f \in C^p(K) \) to the unique chain \( \chainOne \in C_p(K) \) such that
    \[
    f(\chainTwo) = \langle \chainOne, \chainTwo \rangle_{C_p}, \quad \forall \chainTwo \in C_p(K).
    \]
\end{definition}

By construction, $\sharp_p = \flat_p^{-1}$. Indeed, since $\flat_p (\chainOne)(\chainTwo) = \langle\chainOne, \chainTwo \rangle_{C_p} \in C^p(K)$ for all $\chainTwo\in C_p(K)$, and since $\sharp_p( \flat_p(\chainOne))$ is a chain $\chainOne'$ such that $\flat_p(\chainOne)(\chainTwo) = \langle \chainOne', \chainTwo\rangle_{C_p}$ for all $\chainTwo \in C_p(K)$, one sees that $\chainOne=\chainOne'$. 
For notational simplicity, we will often write $\flat_p^{-1}$ instead of $\sharp_p$.

\begin{lemma}\label[lemma]{lem:music}
The flat isomorphism
\(\flat_p \colon C_p(K) \to C^p(K)\) 
is an \emph{orthogonal map}, that is, it preserves the inner product:
for any $\chainOne, \chainTwo \in C_p(K)$,
\[
\langle \chainOne, \chainTwo\rangle_{C_p}
= \langle \flat_p(\chainOne), \flat_p(\chainTwo)\rangle_{C^p}.
\]
Similarly, the sharp isomorphism
\(\sharp_p \colon C^p(K) \to C_p(K)\) 
is orthogonal, and the two are adjoint to each other:
\(
\flat_p^* = \sharp_p 
\)
and \(
\sharp_p^* = \flat_p.
\)
\end{lemma}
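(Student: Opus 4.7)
The plan is to reduce everything to a short computation on a basis, then bootstrap from there. The key observation is that if we write out what $\flat_p$ does to a basis simplex $\sigma \in K_p$, we get, for any $\tau \in K_p$,
\[
\flat_p(\sigma)(\tau) = \langle \sigma, \tau\rangle_{C_p} = w_p(\sigma)^{-1}\,\delta_{\sigma\tau},
\]
so in terms of the dual basis $\flat_p(\sigma) = w_p(\sigma)^{-1}\,\sigma^{\vee}$. Note that $\flat_p$ is not the map $\sigma \mapsto \sigma^\vee$: the weight rescaling is exactly what makes the two inner products compatible.

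To establish orthogonality, I would verify the identity $\langle \flat_p(\sigma),\flat_p(\tau)\rangle_{C^p} = \langle \sigma,\tau\rangle_{C_p}$ on basis elements and extend by bilinearity. Using the formula above and \cref{eq:inner product cochain}, if $\sigma = \tau$ both sides equal $w_p(\sigma)^{-1}$ (the cochain side being $w_p(\sigma)^{-1}\cdot w_p(\sigma)^{-1}\cdot w_p(\sigma)$), and if $\sigma \neq \tau$ both sides vanish. Orthogonality of $\sharp_p$ is then immediate because $\sharp_p = \flat_p^{-1}$: for any $f,g \in C^p(K)$, writing $f = \flat_p(\alpha)$, $g = \flat_p(\beta)$ with $\alpha = \sharp_p(f)$, $\beta = \sharp_p(g)$, orthogonality of $\flat_p$ gives $\langle f,g\rangle_{C^p} = \langle \flat_p(\alpha), \flat_p(\beta)\rangle_{C^p} = \langle \alpha, \beta\rangle_{C_p} = \langle \sharp_p(f), \sharp_p(g)\rangle_{C_p}$.

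For the adjoint identities, my approach is to verify the defining relation of the adjoint directly. Given $\alpha \in C_p(K)$ and $f \in C^p(K)$, set $\beta = \sharp_p(f)$ so that $f = \flat_p(\beta)$. Then, using orthogonality of $\flat_p$,
\[
\langle \flat_p(\alpha), f\rangle_{C^p} = \langle \flat_p(\alpha), \flat_p(\beta)\rangle_{C^p} = \langle \alpha, \beta\rangle_{C_p} = \langle \alpha, \sharp_p(f)\rangle_{C_p}.
\]
By the uniqueness of the adjoint, this forces $\flat_p^\adjoint = \sharp_p$. Taking adjoints of both sides (or running the symmetric argument with roles swapped) yields $\sharp_p^\adjoint = \flat_p$.

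There is no substantial obstacle here; the only point to be careful about is not to confuse the flat map with the naive identification $\sigma \mapsto \sigma^\vee$. That identification is an isometry only when all weights equal $1$; in general one must carry along the factor $w_p(\sigma)^{-1}$, and it is precisely this factor, paired with the reciprocal weighting in \cref{eq:inner product cochain}, that produces the cancellation making $\flat_p$ orthogonal.
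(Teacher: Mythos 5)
Your proposal is correct and follows essentially the same route as the paper's proof: compute $\flat_p(\sigma) = w_p(\sigma)^{-1}\sigma^\dual$ on basis simplices, verify the inner-product identity there (with the same $w_p(\sigma)^{-1}\cdot w_p(\sigma)^{-1}\cdot w_p(\sigma)$ cancellation), extend by bilinearity, and then derive the adjoint identities from the isometry property together with $\sharp_p = \flat_p^{-1}$. Your only departure is cosmetic — you deduce orthogonality of $\sharp_p$ explicitly from that of $\flat_p$ rather than invoking ``a similar argument,'' and you verify $\flat_p^\adjoint = \sharp_p$ where the paper verifies the mirror-image identity $\sharp_p^\adjoint = \flat_p$ — so the two proofs are interchangeable.
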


\begin{proof}
For any simplex $\sigma \in K_p$, we have 
\(
\flat_p(\sigma) = \langle \sigma, \cdot \rangle_{C_p}
= w_p(\sigma)^{-1}\sigma^\dual
\)
by definition. Thus, for any two simplices $\sigma, \tau \in K_p$,
\[
\langle \flat_p(\sigma), \flat_p(\tau)\rangle_{C^p}
= w_p(\sigma)^{-1} w_p(\tau)^{-1} 
\langle \sigma^\dual, \tau^\dual \rangle_{C^p}
= 
\begin{cases}
w_p(\sigma)^{-1}, & \sigma = \tau,\\
0, & \text{otherwise}.
\end{cases}
\]
This coincides with $\langle \sigma, \tau\rangle_{C_p}$.
By bilinearity of the inner product and linearity of $\flat_p$,
the equality extends to all $\chainOne, \chainTwo \in C_p(K)$,
so $\flat_p$ preserves inner products. A similar argument holds for $\sharp_p$.

Since $\sharp_p = \flat_p^{-1}$ and both maps are isometries, we have $\sharp_p^* = \flat_p$ and, equivalently, $\flat_p^* = \sharp_p$. 
Indeed, for any $f \in C^p(K)$ and $\alpha \in C_p(K)$,
\[
\langle f, \sharp_p^* \alpha\rangle_{C^p}
= \langle \sharp_p f, \alpha\rangle_{C_p}
= \langle \flat_p \sharp_p f, \flat_p \alpha\rangle_{C^p}
= \langle f, \flat_p \alpha\rangle_{C^p},
\]
where the first equality is the definition of the adjoint, the second uses that $\flat_p$ preserves inner products, and the last follows from $\flat_p = \sharp_p^{-1}$.
\end{proof}

\begin{lemma}
\label[lemma]{lem:commutativity of operators}
    The following diagram commutes:
    \begin{equation}\label{eq:combined diagram}
    \begin{tikzcd}
        \,
        & C_{p}(X) 
        \arrow[l, phantom, "\cdots"] 
        \arrow[d, "\flat_{p}"', shift right=2] \arrow[r, "\partial_{p}"] 
        & C_{p-1}(X)  
        \arrow[d, "\flat_{p-1}"', shift right=2] 
        & \, \arrow[l, phantom, "\cdots"] \\
        \, 
        & C^{p}(X) 
        \arrow[l, phantom, "\cdots"] \arrow[u, "\sharp_{p}"', shift right] \arrow[r, "\delta_{p-1}^*"', shift right]  
        & C^{p-1}(X) \arrow[u, "\sharp_{p-1}"']           
        & \, \arrow[l, phantom, "\cdots"] 
    \end{tikzcd}
 \text{ and }
    \begin{tikzcd}
        \,
        & C_{p}(X) \arrow[l, phantom, "\cdots"] \arrow[d, "\flat_{p}"', shift right=2] 
        & C_{p-1}(X) \arrow[l, "\partial_p^*"'] 
        \arrow[d, "\flat_{p-1}"', shift right=2] 
        & \, \arrow[l, phantom, "\cdots"] \\
        \, \arrow[r, phantom, "\cdots"] 
        & C^{p}(X) \arrow[u, "\sharp_p"', shift right] 
        & C^{p-1}(X) \arrow[l, "\delta_{p-1}"] \arrow[u, "\sharp_{p-1}"']           
        & \, \arrow[l, phantom, "\cdots"] 
    \end{tikzcd}
    \end{equation}
    In particular, \(
    \flat_{p-1} \circ \partial_p = \delta^{\adjoint}_{p-1} \circ \flat_p\) and 
    \(
    \delta_{p-1} \circ \flat_{p-1} =  \flat_p \circ \partial_p^\adjoint \).
\end{lemma}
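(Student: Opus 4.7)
The plan is to prove the two identities — (A) $\flat_{p-1} \circ \partial_p = \delta^{\adjoint}_{p-1} \circ \flat_p$ and (B) $\delta_{p-1} \circ \flat_{p-1} = \flat_p \circ \partial_p^{\adjoint}$ — by establishing (A) directly and then deducing (B) by taking adjoints. The single ingredient I would set up first is a ``bridge'' identity translating between the duality pairing and the cochain inner product: for any $f \in C^p(K)$ and $\gamma \in C_p(K)$,
\[
f(\gamma) = \langle f, \flat_p \gamma\rangle_{C^p}.
\]
This follows immediately from \cref{lem:music}: writing $f = \flat_p \chainOne$ (using bijectivity of $\flat_p$) and applying the isometry property yields $\flat_p \chainOne(\gamma) = \langle \chainOne, \gamma\rangle_{C_p} = \langle \flat_p \chainOne, \flat_p \gamma\rangle_{C^p}$.

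To prove (A), I would fix $\chainOne \in C_p(K)$ and test both sides, as elements of $C^{p-1}(K)$, against an arbitrary $\chainTwo \in C_{p-1}(K)$. The LHS is $\langle \partial_p \chainOne, \chainTwo\rangle_{C_{p-1}}$ directly by the definition of $\flat_{p-1}$. For the RHS, I would chain four steps — the bridge identity, the defining property of the adjoint $\delta^\adjoint_{p-1}$, the bridge identity again, and the definition $\delta_{p-1} = \partial_p^{\dual}$ — to obtain
\[
(\delta^\adjoint_{p-1}\flat_p\chainOne)(\chainTwo)
= \langle \delta^\adjoint_{p-1}\flat_p\chainOne, \flat_{p-1}\chainTwo\rangle_{C^{p-1}}
= \langle \flat_p\chainOne, \delta_{p-1}\flat_{p-1}\chainTwo\rangle_{C^p}
= (\delta_{p-1}\flat_{p-1}\chainTwo)(\chainOne)
= \flat_{p-1}\chainTwo(\partial_p \chainOne)
= \langle \partial_p\chainOne, \chainTwo\rangle_{C_{p-1}},
\]
matching the LHS. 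For (B), taking adjoints in (A) and invoking $\flat^{\adjoint}=\sharp$ from \cref{lem:music} yields $\partial_p^{\adjoint}\circ\sharp_{p-1} = \sharp_p \circ \delta_{p-1}$; pre-composing with $\flat_{p-1}$ and post-composing with $\flat_p$, and using $\flat\circ\sharp = \sharp\circ\flat = \mathrm{id}$, delivers (B).

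The main obstacle is the coexistence of three distinct pairings — the duality pairing $C^p(K) \times C_p(K) \to \Rspace$, the chain inner product $\langle \cdot, \cdot\rangle_{C_p}$, and the cochain inner product $\langle \cdot, \cdot\rangle_{C^p}$ — together with four operators $\partial_p, \partial_p^{\adjoint}, \delta_{p-1}, \delta_{p-1}^{\adjoint}$ whose domains and codomains must be tracked carefully. Once the bridge identity is isolated, however, both commutativity statements reduce to routine bookkeeping, with (B) coming essentially for free by adjunction.
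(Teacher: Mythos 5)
Your proposal is correct, and it is essentially the mirror image of the paper's argument. The paper proves the \emph{second} identity directly: it shows $\delta_{p-1} = \flat_p \circ \partial_p^\adjoint \circ \flat_{p-1}^{-1}$ by a single chain of equalities on a test chain $\alpha$ --- definition of $\flat_p$, the defining property of the adjoint $\partial_p^\adjoint$ in the \emph{chain} inner products, the definition of $\sharp_{p-1}$, and $\delta_{p-1} = \partial_p^\dual$ --- and then asserts that this suffices (the first identity following implicitly by the same adjunction you spell out). You instead prove the \emph{first} identity directly, working on the cochain side: your ``bridge'' identity $f(\gamma) = \langle f, \flat_p\gamma\rangle_{C^p}$ requires the isometry property of \cref{lem:music}, and you then use the adjoint of $\delta_{p-1}$ in the cochain inner products before translating back, finally deducing the second identity by taking adjoints via $\flat^\adjoint = \sharp$. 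Both routes use the same toolkit and both are valid; the paper's direct computation is marginally leaner, since it never invokes the isometry or the cochain inner product at all (note that your bridge identity is, after applying $\sharp_p$, just the definition of the sharp map, so the detour through $\langle\cdot,\cdot\rangle_{C^p}$ can be avoided). On the other hand, your version has the merit of making fully explicit the adjunction by which each square of the diagram implies the other --- a step the paper compresses into ``it suffices to show,'' which itself tacitly relies on $\flat^\adjoint = \sharp$ from \cref{lem:music}.
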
 

\begin{proof}
It suffices to show that \(\delta_{p-1} = \flat_p \circ \partial_p^* \circ \flat_{p-1}^{-1}\). Indeed, since the coboundary map is the \emph{dual} of the boundary map, for \(f \in C^{p-1}(K)\) and \(\alpha \in C_p(K)\),
\begin{align*}
(\flat_p \partial_p^* \flat_{p-1}^{-1} f)(\alpha)
= \langle \partial_p^*(\flat_{p-1}^{-1} f),\, \alpha \rangle_{C_p}
= \langle \flat_{p-1}^{-1} f,\, \partial_p \alpha \rangle_{C_{p-1}} = f(\partial_p \alpha)
= (\partial_p^\dual f)(\alpha) = (\delta_{p-1} f)(\alpha).
\end{align*}
showing that \(\delta_{p-1} = \flat_p \circ \partial_p^* \circ \flat_{p-1}^{-1}\).
\end{proof} 

\begin{remark}
In our exposition, we first equip both chain and cochain spaces with inner products and then show that the musical isomorphisms preserve them. This is equivalent to endowing only the chain space with an inner product, constructing the flat isomorphism from it, and inducing the corresponding inner product on the cochain space via the push-forward through this isomorphism. 
\end{remark}

\subsection{Bases in the Spaces of Chains and Cochains}
\label{sec:bases-chains-cochains}

Let \(K\) be a \(d\)-dimensional simplicial complex. The set of simplices provides a standard choice of basis for the chain groups, which in turn induces a dual basis for the cochain groups.

\begin{definition}[{\bf Standard bases}]\label[definition]{def:standard basis}
    For \(0 \leq p \leq \dim(K)\), the \emph{standard basis in the space of chains} \(C_p(K)\), is defined by the ordered set of \(p\)-simplices in \(K\), that is,
    \[\Bcal_p \coloneqq \left\{ \sigma_1, \dots, \sigma_{n_p} \right\}.\]
    The \emph{standard basis in the space of cochains} is defined as the dual basis, that is, 
    \[\Bcal^p \coloneqq \left\{ \sigma_1^\dual, \dots, \sigma_{n_p}^\dual \right\}.\]
\end{definition}

\begin{remark}
We already implicitly consider this choice of basis when we define the inner products $\langle\cdot, \cdot\rangle_{C_p}$ and $\langle\cdot, \cdot\rangle_{C^p}$ in \cref{sec:linear-algebra-chains-cochains}. These inner products are precisely defined so that the standard bases are \emph{orthogonal bases}.
\end{remark}

By normalizing the bases above, we obtain the following orthonormal bases in the chain and cochain spaces. 
\begin{definition}[{\bf Orthonormal bases}]
\label[definition]{def:orthonormal basis}
    For \(0 \leq p \leq \dim(K)\), the \emph{orthonormal basis in the space of chains} \(C_p(K)\) with respect to \(\langle \cdot,\cdot \rangle_{C_p}\) is given by
    \[    \widetilde{\Bcal}_p \coloneqq \left\{\sqrt{w_p(\sigma_1)}\,  \sigma_1,\, \dots, \, \sqrt{w_p(\sigma_{n_p})}\, \sigma_{n_p} \right\}.\]
    Similarly, the \emph{orthonormal basis in the space of cochains} with respect to \(\langle \cdot,\cdot \rangle_{C^p}\) is given by
    \[    \widetilde{\Bcal}^p \coloneqq \left\{\dfrac{\sigma_1^\dual}{\sqrt{w_p(\sigma_1)}}  , \dots, \dfrac{\sigma_{n_p}^\dual}{\sqrt{w_p(\sigma_{n_p})}} \right\}.\]
\end{definition}

\begin{remark} 
The ``standard'' and ``orthonormal'' bases serve different roles. 
The standard bases depend only on the combinatorial structure of \(K\), while the orthonormal bases also depend on the choice of inner products, or equivalently, on the weights \(w_p\). Consequently, neither choice is strictly canonical once a weighting scheme is introduced; each is convenient in a different context. For this reason, we avoid referring to either as \emph{canonical}. 
\end{remark}

\subsection{Matrix Representations}
\label{sec:matrix-representations}
We can now provide matrix representations for all the operators defined in \cref{sec:linear-algebra-chains-cochains} with respect to the bases in \cref{sec:bases-chains-cochains}.

\paragraph{Inner product matrices and changes of basis.}
In the standard bases the inner product matrices in the spaces of chains and cochains are respectively
\[[\langle\cdot, \cdot\rangle_{C_p}]_{\Bcal_p, \Bcal_p} = W_p^{-1}, \qquad \text{and} \qquad [\langle\cdot, \cdot\rangle_{C^p}]_{\Bcal^p, \Bcal^p} = W_p ,\]
where $W_p := \diag(w_p(\sigma_1), \dots , w_p(\sigma_{n_p}))$ denotes the diagonal matrix with values given by the weights of the $p$-simplices. In the orthonormal bases, by definition, these matrices will be the identity.
The change of basis matrix between the standard and orthonormal basis is given by 
\[[\mathrm{Id}]^{\Bcal_p}_{\widetilde{\Bcal}_p} = W_p^{1/2}, \qquad \text{and} \qquad [\mathrm{Id}]^{\Bcal^p}_{\widetilde{\Bcal}^p} = W_p^{-1/2},\]
in the space of chains and cochains, respectively.

\paragraph{Boundary and coboundary operators.}
Let $B_p:=[\partial_p]^{\Bcal_{p-1}}_{\Bcal_p}$ be the matrix representation of the boundary operator with respect to the standard bases, which, in some sources is called the \emph{incidence matrix}. With respect to the orthonormal bases, taking the change of basis we have  
\[[\partial_p]^{\widetilde{\Bcal}_{p-1}}_{\widetilde{\Bcal}_p} = W_{p-1}^{-1/2} B_p W_{p}^{1/2}.\]
\begin{lemma}\label[lemma]{lem:coboundary matrix representation}
    The matrix representations for the coboundary operator are given by \[[\delta_p]^{\Bcal^{p+1}}_{\Bcal^p} = B_{p+1}^\top, \quad \text{and}\quad[\delta_p]^{\widetilde{\Bcal}^{p+1}}_{\widetilde{\Bcal}^p} = W_{p+1}^{1/2} B_{p+1}^\top W_p^{-1/2}.\]
\end{lemma}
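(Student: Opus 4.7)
The plan is to establish the two identities in sequence, first in the standard (dual) bases and then transporting the result to the orthonormal bases via a change of basis.

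For the first identity, I would use the defining relation $\delta_p = \partial_{p+1}^\dual$ together with the standard fact from linear algebra that the matrix of a dual map, expressed in the dual bases, is the transpose of the original matrix. Concretely, for each $\sigma \in K_p$ and $\tau \in K_{p+1}$, I would compute
\[
[\delta_p(\sigma^\dual)](\tau) \;=\; [\partial_{p+1}^\dual(\sigma^\dual)](\tau) \;=\; \sigma^\dual(\partial_{p+1}\tau) \;=\; (B_{p+1})_{\sigma,\tau},
\]
so that $\delta_p(\sigma^\dual) = \sum_{\tau\in K_{p+1}} (B_{p+1})_{\sigma,\tau}\,\tau^\dual$. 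Reading off the matrix in the ordered basis $\Bcal^{p+1}$ shows that the $(\tau,\sigma)$ entry of $[\delta_p]^{\Bcal^{p+1}}_{\Bcal^p}$ is $(B_{p+1})_{\sigma,\tau}$, which is exactly $(B_{p+1}^\top)_{\tau,\sigma}$.

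For the second identity, I would apply the standard change-of-basis rule
\[
[\delta_p]^{\widetilde{\Bcal}^{p+1}}_{\widetilde{\Bcal}^p}
\;=\;
[\mathrm{Id}]^{\widetilde{\Bcal}^{p+1}}_{\Bcal^{p+1}}\,\cdot\,[\delta_p]^{\Bcal^{p+1}}_{\Bcal^p}\,\cdot\,[\mathrm{Id}]^{\Bcal^p}_{\widetilde{\Bcal}^p}.
\]
The excerpt already records $[\mathrm{Id}]^{\Bcal^p}_{\widetilde{\Bcal}^p} = W_p^{-1/2}$; inverting gives $[\mathrm{Id}]^{\widetilde{\Bcal}^{p+1}}_{\Bcal^{p+1}} = W_{p+1}^{1/2}$. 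Substituting the first identity $[\delta_p]^{\Bcal^{p+1}}_{\Bcal^p} = B_{p+1}^\top$ yields $W_{p+1}^{1/2} B_{p+1}^\top W_p^{-1/2}$, as claimed.

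There is no substantive obstacle here; the proof is pure bookkeeping. The only point that requires care is keeping the direction of the change-of-basis matrices straight, since the cochain identification $[\mathrm{Id}]^{\Bcal^p}_{\widetilde{\Bcal}^p} = W_p^{-1/2}$ runs in the opposite direction from the analogous chain identification $[\mathrm{Id}]^{\Bcal_p}_{\widetilde{\Bcal}_p} = W_p^{1/2}$, a sign (or rather exponent) flip that is easy to miss. As a sanity check, one can alternatively derive the second identity from \eqref{eq:adjoint-matrix-expression} applied to $\delta_p$ viewed as the adjoint of some map, or from \cref{lem:commutativity of operators} which expresses $\delta_{p-1}$ as $\flat_p \circ \partial_p^\adjoint \circ \flat_{p-1}^{-1}$; both routes produce the same conjugation pattern by $W_\bullet^{\pm 1/2}$.
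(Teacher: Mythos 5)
Your proof is correct and follows essentially the same route as the paper: the first identity is established by the identical evaluation $\delta_p(\sigma^\dual)(\tau) = \sigma^\dual(\partial_{p+1}\tau)$ against the dual basis, and the second by a change of basis, which the paper dismisses with ``the second follows after change of basis'' and you merely spell out explicitly (with the correct exponents $[\mathrm{Id}]^{\Bcal^p}_{\widetilde{\Bcal}^p} = W_p^{-1/2}$ and $[\mathrm{Id}]^{\widetilde{\Bcal}^{p+1}}_{\Bcal^{p+1}} = W_{p+1}^{1/2}$). No gaps; the extra bookkeeping detail and the sanity checks are consistent with the paper's conventions.
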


\begin{proof}
Let $\sigma_i^\dual \in \Bcal^{p}$, with $1 \leq i \leq n_{p}$, and $\sigma_j' \in \Bcal_{p+1}$, with $1 \leq j \leq n_{p+1}$. Then, the value at position $(j, i)$ in $[\delta_p]^{\Bcal^{p+1}}_{\Bcal^p}$ is precisely given by $\delta_p(\sigma_i^\dual) (\sigma_j')$. By definition, we know that 
\[\delta_p(\sigma_i^\dual) (\sigma_j') = \sigma_i^\dual\left(\partial_{p+1}(\sigma'_j)\right),\] 
the left-hand side being precisely the value of $[\partial_{p+1}]^{\Bcal_{p}}_{\Bcal_{p+1}}$ at position $(i,j)$, proving the first representation in the lemma. The second follows after change of basis.
\end{proof}

Applying \cref{eq:adjoint-matrix-expression}, we get the matrix representations of the adjoint of the boundary map: 
\begin{equation}\label{eq:adjoint of boundary matrix representation}
    [\partial_p^\adjoint]_{\Bcal_{p-1}}^{\Bcal_{p}} = W_p B_{p}^\top W_{p-1}^{-1}, \quad \text{and} \quad [\partial_p^\adjoint]_{\widetilde{\Bcal}_{p-1}}^{\widetilde{\Bcal}_{p}} = W_p^{1/2} B_{p}^\top W_{p-1}^{-1/2} = \left([\partial_p]^{\widetilde{\Bcal}^{p-1}}_{\widetilde{\Bcal}^p}\right)^\top,
\end{equation}
and the matrix representations of the adjoint of the coboundary map: 
\begin{equation}\label{eq:adjoint of coboundary matrix representation}
    [\delta_p^\adjoint]_{\Bcal_{p+1}}^{\Bcal_p} = W_p^{-1} B_{p+1} W_{p+1}, \quad \text{and} \quad [\delta_p^\adjoint]_{\widetilde{\Bcal}_{p+1}}^{\widetilde{\Bcal}_p} = W_p^{-1/2} B_{p+1} W_{p+1}^{1/2} = \left([\delta_p]^{\widetilde{\Bcal}^{p-1}}_{\widetilde{\Bcal}^p}\right)^\top, 
\end{equation}
where the second representation is to be expected, as in general the matrix representation of the adjoint of a linear map with respect to orthonormal basis equals the transpose of the matrix representation of the original map. 

\paragraph{Musical isomorphisms.}
For each simplex $\sigma \in K_p$, the flat map sends $\sigma$ to $w_p(\sigma)^{-1}\sigma^\dual$, and the sharp map sends $\sigma^\dual$ to $w_p(\sigma)\sigma$.
Thus, with respect to the standard bases we have that the musical isomorphisms have the following matrix representations
\begin{equation}
    \label{eq:matrix-musical-standard}
    [\flat_p]_{\Bcal_p}^{\Bcal^p} = W_p^{-1}, \qquad \text{and} \qquad [\sharp_p]_{\mathcal{B}^p}^{\Bcal_p} = W_p.
\end{equation}

We can also compute the adjoint of the flat isomorphism, using \cref{eq:adjoint-matrix-expression,eq:matrix-musical-standard}:
\[[\flat_p^\adjoint]_{\Bcal^p}^{\Bcal_p} = W_p W_p^{-1} W_p = W_p = [\flat_p^{-1}]_{\mathcal{B}^p}^{\Bcal_p}.\]
\section{Laplacians of Graphs and Simplicial Complexes}
\label{sec:laplacian}

We review Laplacians of simplicial complexes and obtain the graph Laplacian as a particular case of this formulation. We use $\Lcal$ to denote a Laplace operator and $L$ to denote its matrix representation with respect to certain bases. Let $K$ be a $d$-dimensional finite simplicial complex, and recall that $K_p$ denotes its set of $p$-simplices, $0 \leq p \leq d$, and $n_p = |K_p|$. 

\subsection{Laplacians as Operators}
\label{sec:laplacian-operator}

In this paper, following~\cite{HorakJost2013}, we work with the combinatorial \emph{up Laplace  operator}, \emph{down Laplace operator}, and \emph{Hodge Laplace  operator}. As operators on the \(p\)-cochain space, they are defined as 
\begin{align}
    \cochainLup{p}(K) & \coloneqq  \delta_p^\adjoint \delta_p;\label{eq:up-laplacian}\\
    \Lcal_{\textrm{down}}^p(K) & \coloneqq \delta_{p-1} \delta_{p-1}^\adjoint; 
    \notag\\
    \Lcal_{\textrm{Hodge}}^p(K) & \coloneqq \chainLup{p}(K) +  \chainLdown{p}(K).  
    \notag
\end{align}
When the underlying simplicial complex $K$ is clear from the context, we will omit it from the notations. By construction, the Laplace operators $\cochainLup{p}, \cochainLdown{p}$, and $\Lcal_{\textrm{Hodge}}^p$ of $K$ are self-adjoint, non-negative (therefore positive semi-definite) and compact operators~\cite{HorakJost2013}. As a consequence, their  eigenvalues are non-negative reals and their eigenspaces are orthogonal and sum (directly) to the whole space.
The up Laplace operator, $\cochainLup{p}(K)$, in particular, gives rise to an effective resistance bilinear form (\cref{sec:resistance-operator}). 

Analogously, as operators on \(p\)-chain spaces, the combinatorial \emph{up Laplace  operator}, \emph{down Laplace operator}, and \emph{Hodge Laplace  operator} are defined as
\begin{align}
    \chainLup{p}(K) & \coloneqq \partial_{p+1}\partial^\adjoint_{p+1}
    ;\label{eq:up-laplacian-chain}\\
    \chainLdown{p}(K) & \coloneqq \partial^\adjoint_{p}\partial_{p}; 
    \notag\\
    \Lcal^{\textrm{Hodge}}_p(K) & \coloneqq \chainLup{p}(K) + \chainLdown{p}(K). 
    \notag
\end{align}
Again, we omit $K$ from these notations when the context is clear.

The chain and cochain Laplacians are related via the musical isomorphisms, as summarized in the following lemma. 
Analogous statements for the down Laplacians hold but are omitted here, since they are not needed in this paper.

\begin{lemma}
\label[lemma]{lem:L pseudoinverse chains cochains}
The chain and cochain up Laplacians, as well as their pseudoinverses, are related via the musical isomorphisms: 
\begin{enumerate}
    \item \(\chainLup{p} = \flat_{p}^{-1} \cochainLup{p}\flat_{p}\);
    \item  \( (\cochainLup{p})^+ 
    = \flat_{p} \, (\chainLup{p})^+ \, \flat_{p}^{-1}.\)
\end{enumerate}
\end{lemma}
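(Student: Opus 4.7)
The plan is to prove (1) by a direct computation using \cref{lem:commutativity of operators}, and then derive (2) from (1) by invoking the orthogonality of the musical isomorphisms (\cref{lem:music}) together with the Moore--Penrose characterization of the pseudoinverse.

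For (1), I would first shift the index \(p \mapsto p+1\) in the two identities of \cref{lem:commutativity of operators} to obtain
\[
\partial_{p+1} \;=\; \flat_p^{-1}\, \delta_p^\adjoint\, \flat_{p+1},
\qquad
\partial_{p+1}^\adjoint \;=\; \flat_{p+1}^{-1}\, \delta_p\, \flat_p.
\]
Substituting these into the definition \(\chainLup{p} = \partial_{p+1}\partial_{p+1}^\adjoint\), the inner factor \(\flat_{p+1}\flat_{p+1}^{-1}\) collapses to the identity, leaving \(\flat_p^{-1}\,\delta_p^\adjoint \delta_p\,\flat_p = \flat_p^{-1}\,\cochainLup{p}\,\flat_p\), which is the claim.

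For (2), I would rewrite (1) in the equivalent form \(\cochainLup{p} = \flat_p\,\chainLup{p}\,\flat_p^{-1}\) and propose \(S := \flat_p\,(\chainLup{p})^+\,\flat_p^{-1}\) as the candidate pseudoinverse of \(T := \cochainLup{p}\). I would then verify the four Moore--Penrose conditions: \(TST = T\), \(STS = S\), \((TS)^\adjoint = TS\), and \((ST)^\adjoint = ST\). The first two follow immediately by telescoping the pairs \(\flat_p^{-1}\flat_p\) and invoking the analogous identities \(\chainLup{p}(\chainLup{p})^+\chainLup{p}=\chainLup{p}\) and \((\chainLup{p})^+\chainLup{p}(\chainLup{p})^+ = (\chainLup{p})^+\). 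For the self-adjointness conditions, the key input is \cref{lem:music}: since \(\flat_p\) is an orthogonal isomorphism, \(\flat_p^\adjoint = \sharp_p = \flat_p^{-1}\), and hence \((\flat_p^{-1})^\adjoint = \flat_p\). This reduces \((TS)^\adjoint\) and \((ST)^\adjoint\) to conjugates of the self-adjoint operators \(\chainLup{p}(\chainLup{p})^+\) and \((\chainLup{p})^+\chainLup{p}\), which yields the required symmetry.

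The main subtlety is in part (2): in general \((AB)^+ \ne B^+ A^+\), so the identity does not follow merely by taking pseudoinverses of both sides of (1). What makes the argument work is that \(\flat_p\) is \emph{orthogonal}, not just invertible; this is precisely what \cref{lem:music} provides, and it is the step that has to be invoked carefully. Once orthogonality is in hand, the verification of the Moore--Penrose conditions is routine algebra.
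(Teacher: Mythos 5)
Your proposal is correct and takes essentially the same route as the paper: part (1) is the paper's own argument (direct substitution of the identities from \cref{lem:commutativity of operators} into \(\chainLup{p}=\partial_{p+1}\partial_{p+1}^\adjoint\)), and part (2) is exactly \cref{prop:Lchain_Lcochain} specialized to \(\psi=\flat_p\), whose proof in the paper includes verbatim your Moore--Penrose four-condition verification (alongside an equivalent check of the three conditions of \cref{def:pseudoinverse}). Your observation that orthogonality of \(\flat_p\) --- supplied by \cref{lem:music}, giving \(\flat_p^\adjoint=\flat_p^{-1}\) --- rather than mere invertibility is the essential ingredient matches the paper's hypothesis that \(\psi\) preserve inner products.
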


\begin{proof}
    The first item follows directly from \cref{lem:commutativity of operators} and the definition of the up Laplacians.
    
    The second item is a consequence of \cref{prop:Lchain_Lcochain}, which asserts the compatibility of pseudoinverses under inner-product-preserving isomorphisms.
\end{proof}

\paragraph{Hodge theory.}  
By standard results from linear algebra, together with the fact that all our vector spaces are finite-dimensional, one obtains the following canonical identifications:
\begin{equation}
    H^p(K; \Rspace) \;\cong\; H_p(K; \Rspace) \;\cong\; \ker \bigl(\Lcal^p_{\textrm{Hodge}}(K)\bigr).
\end{equation}
In particular, the $p$-th (co)homology group can be realized concretely as the space of \emph{harmonic $p$-(co)chains}, i.e., those annihilated by the Hodge Laplacian.  

Moreover, the cochain space admits the \emph{Hodge decomposition}:
\begin{equation}
\label{eq:Hodge_decomposition}
    C^p(K; \Rspace) 
    \;=\; \image(\delta_p^\adjoint) \;\oplus\; \ker\!\bigl(\Lcal^p_{\textrm{Hodge}}(K)\bigr) \;\oplus\; \image(\delta_{p-1}), 
\end{equation}
where the three summands are mutually orthogonal with respect to the inner product. In particular,
\[
(\image \delta_p^\adjoint)^\perp \;=\; \ker \bigl(\Lcal^p_{\textrm{Hodge}}(K)\bigr) \oplus \image (\delta_{p-1}),
\quad\text{and}\quad
(\image \delta_{p-1})^\perp \;=\; \image (\delta_p^\adjoint) \oplus \ker \bigl(\Lcal^p_{\textrm{Hodge}}(K)\bigr).
\]

\subsection{Laplacians as Matrices}
\label{sec:laplacian-matrix}

With respect to the standard or orthonormal bases, the operators defined in \cref{sec:laplacian-operator} admit matrix representations.

In the standard basis (\cref{def:standard basis}), it follows from \cref{lem:coboundary matrix representation} and \cref{eq:adjoint of boundary matrix representation,eq:adjoint of coboundary matrix representation} that the matrix representations of the cochain Laplacians \( \cochainLup{p} \) and \( \cochainLdown{p} \) and the chain Laplacians \( \chainLup{p} \) and \( \chainLdown{p} \)  are respectively given by
\begin{align}
    [\cochainLup{p}]_{\caB^p}^{\caB^p} &
    =W_p^{-1} B_{p+1} W_{p+1} B_{p+1}^\top \eqqcolon \Lupmatrix{p}{}  
    \label{eq:L_up}\\
    [\cochainLdown{p}]_{\caB^p}^{\caB^p} &
    = B_p^\top W_{p-1}^{-1} B_p W_p 
    \eqqcolon \Ldownmatrix{p}{} \label{eq:L_down}\\
    [\chainLup{p}]_{\caB_p}^{\caB_p} &
    =B_{p+1} W_{p+1} B_{p+1}^\top W_p^{-1} = W_p \Lupmatrix{p}{} W_p^{-1}  \label{eq:L_up_chain}\\
    [\chainLdown{p}]_{\caB_p}^{\caB_p} &
    = W_p B_p^\top W_{p-1}^{-1} B_p = W_p \Ldownmatrix{p}{} W_p^{-1}. \label{eq:L_down_chain}
\end{align} 
These expressions recover the \emph{combinatorial Hodge Laplacian operators} defined by Horak and Jost~\cite{HorakJost2013}, which encompasses several previous approaches to define Laplace operators in simplicial complexes (e.g.~\cite{friedman1996computing,duval2002shifted,muhammad2006control}). 

Observe that the matrices $\Lupmatrix{p}{}$ and $\Ldownmatrix{p}{}$ need not be symmetric, even though they correspond to self-adjoint operators. The lack of symmetry stems from the fact that the standard basis is not orthonormal unless all weights are set to 1. 

In the orthonormal basis (\cref{def:orthonormal basis}), it follows from \cref{lem:coboundary matrix representation} and \cref{eq:adjoint of boundary matrix representation,eq:adjoint of coboundary matrix representation} that the matrix representations of the cochain Laplacians \( \cochainLup{p} \) and \( \cochainLdown{p} \) and the chain Laplacians \( \chainLup{p} \) and \( \chainLdown{p} \) are respectively given by
\begin{align}
    [\cochainLup{p}]_{\tilde\Bcal^p}^{\tilde\Bcal^p} 
        & 
        = W_{p}^{-1/2} B_{p+1} W_{p+1} B_{p+1}^\top W_{p}^{-1/2} 
        = W_p^{1/2} \Lupmatrix{p}{} W_p^{-1/2}  
        \label{eq:L_up_orthonormal}\\
    [\cochainLdown{p}]_{\tilde\caB^p}^{\tilde\caB^p} & 
        = W_{p}^{1/2} B_{p}^\top W_{p-1}^{-1} B_{p} W_{p}^{1/2}
        = W_p^{1/2}\Ldownmatrix{p}{}W_p^{-1/2} 
        \label{eq:L_down_orthonormal}\\
    [\chainLup{p}]_{\tilde\caB_p}^{\tilde\caB_p} &
        = [\cochainLup{p}]_{\tilde\Bcal^p}^{\tilde\Bcal^p} \label{eq:L_up_chain_orthonormal} \\
    [\chainLdown{p}]_{\tilde\caB_p}^{\tilde\caB_p} &
        = [\cochainLdown{p}]_{\tilde\caB^p}^{\tilde\caB^p}. \label{eq:L_down_chain_orthonormal}
\end{align} 
which are now symmetric matrices, as expected. These now recover the \emph{normalized Hodge Laplacian operators}, see \cite[Definition 4.1]{ponoi2024normalized}. Interestingly, normalized and (un-normalized) Hodge Laplacians are often treated in the literature as distinct objects. With our approach, we demonstrate that they are just different manifestations of the same operator, after choosing different bases. This example illustrates the strength of our approach, which unifies seemingly different operators by revealing them as alternative representations of a single, basis-free operator. 

\paragraph{Graph Laplacians as matrices.}
\label{sec:laplacian-graph}

We now show how the graph Laplacian arises from the preceding formulation, as it plays a central role in the study of effective resistance in graphs.

Throughout, we consider finite, undirected, simple graphs (no self-loops or multiple edges), viewed as \(1\)-dimensional simplicial complexes.  
The graph is not assumed to be connected unless explicitly required, in which case this assumption will be stated. When an orientation is needed, for instance to form an incidence matrix, we fix one arbitrarily by ordering the vertices.

Let \(G = (V, E)\) be a graph, where \(V\) is the set of vertices and \(E\) the set of edges, with \(|V| = n_0\) and \(|E| = n_1\). 
We equip edges with positive weights \(w\colon E \to \Rspace_{>0}\), and take vertex weights to be unit.
The edge weights define a diagonal weight matrix \(W_1 \in \Rspace^{n_1 \times n_1}\) containing the weights of the edges, with respect to the standard basis. The weight matrix for the vertices is simply the identity \(W_0 =\mathrm{Id} \in \Rspace^{n_0 \times n_0}\), so we often omit it. 
Assuming an arbitrary ordering of the vertices, we define the (signed) \emph{incidence matrix} \(B_1 \in \Rspace^{n_0 \times n_1}\) as the matrix representation of the boundary operator \(\partial_1 \colon C_1(G) \to C_0(G)\) with respect to the standard bases.

In this setting, the classical \emph{graph Laplacian} \(L \in \Rspace^{n_0 \times n_0}\) is
\begin{equation}
\label{eq:graph-laplacian}
L := B_1 W_1 B_1^\top,
\end{equation}
which coincides with the up Laplacian \(\Lupmatrix{0}{}\) in \cref{eq:L_up} for \(p = 0\) and \(W_0 = \mathrm{Id}\). 
It is also the full Hodge Laplacian since \(\Ldownmatrix{0}{} = 0\). 
While \(\Lupmatrix{p}{}\) is not symmetric in general, in this case it coincides with the symmetric classical graph Laplacian. 
The symmetry arises because \(W_0 = \mathrm{Id}\), which removes the asymmetry introduced by the weights in \cref{eq:L_up}.  
The symmetry is therefore incidental rather than essential. 

In higher-dimensional generalizations, several formulations in the literature impose symmetry of the Laplacian matrix by definition, even when both \((p-1)\)- and \(p\)-simplices are weighted, by omitting the \((p-1)\)-weights in the standard basis. 
In contrast, our formulation retains all weights and shows that symmetry should not be expected in the standard basis but arises naturally in the orthonormal basis, as shown in \cref{eq:L_up_orthonormal}.

\section{Effective Resistance in Graphs}
\label{sec:resistance_graphs}

We begin by reviewing the classical matrix-based definitions of effective resistance in graphs, distinguishing between edge-based and vertex-based formulations, with the latter being the classical notion in the literature. As one of the main contributions of this paper, we then reinterpret these definitions as bilinear forms on chain and cochain spaces, guided by their physical interpretation and formalized through musical isomorphisms. This operator-theoretic viewpoint reveals the underlying structure and extends naturally to higher-dimensional simplicial complexes, where effective resistance is defined for arbitrary simplices via analogous bilinear forms and operators.

\subsection{Effective Resistance in Graphs as Matrices}
\label{sec:resistance-graph}

Effective resistance originates from  translating Kirchhoﬀ's circuit equations in terms of the graph Laplacian~\cite{Devriendt2022a}, and has been studied by a number of authors since the early 2000s~\cite{Bapat2004,Ellens2011,GhoshBoydSaberi2008}.  
As before, let us consider a graph with $n_1$ edges and $n_0$ vertices, weights defined on the edges and collected in the weight matrix $W_1 \in \Rspace^{n_1 \times n_1}$, while vertices have unit weights; and signed incidence matrix (or the boundary matrix) $B_1 \in \Rspace^{n_0 \times n_1}$. Let $L = B_1W_1 B_1^\top$ be its symmetric graph Laplacian as defined in \cref{eq:graph-laplacian}. 

\paragraph{Effective resistance of edges.}
Following~\cite{SpielmanSrivastava2011}, define the matrix $R \in \Rspace^{n_1 \times n_1}$ as
\begin{align}\label{eq:graph_resistance}
R \coloneqq B_1^\top L^{+} B_1 = B_1^\top (B_1 W_1 B_1^\top)^+ B_1,
\end{align} 
where $L^+$ denotes the Moore-Penrose inverse of $L$.
We refer to $R$ as the \emph{edge-based effective resistance matrix}.
The diagonal entries of $R$ give the \emph{effective resistance} of edges $e \in E$:
\begin{equation}\label{eq:ER of edges}
r_e \coloneqq R(e,e).
\end{equation}

\begin{remark}[Terminology]
    The matrix $R$ should not be confused with the classical \emph{vertex-based} effective resistance matrix, whose rows and columns are indexed by vertices, and whose $(u,v)$-entry gives the effective resistance between vertices $u$ and $v$ (see \cref{eq:devriendtER}).
    In contrast, $R$ is indexed by edges and will define a bilinear form on 1-chains. 
    We will generalize this edge-based formulation to simplicial complexes (see \cref{sec:ER simplicial bilinear-form}).
    Subsequently, we will generalize the vertex-based formulation as a bilinear form on boundaries (see \cref{rmk: ER depends only on boundaries}).
\end{remark}

\paragraph{Effective resistance between vertices.}
Let $\mathbf{1}_v \in \Rspace^{n_0}$ denote the coordinate vector of vertex $v$ with respect to the standard basis given by the vertices of the graph, i.e. the vector with a 1 in the $v$-th position and zeros elsewhere.  
The \emph{effective resistance (distance)} between vertices \(u\) and \(v\) (see~\cite{KleinRandic1993}) is given by
\begin{equation}
\label{eq:devriendtER}
r_{uv} \coloneqq (\mathbf{1}_u - \mathbf{1}_v )^{\top} L^{+} (\mathbf{1}_u - \mathbf{1}_v).
\end{equation}

When \(e\) is an edge from vertex \(u\) to \(v\), the effective resistance \(r_e\) of $e$ (\cref{eq:ER of edges}) agrees with the effective resistance between the vertices \(u\) and \(v\).
Indeed, let \(e \in E\) be such an edge, and let \(b_e \in \Rspace^{n_0}\) denote the column \(B_1\) indexed by \(e\).
Then \(b_e = \mathbf{1}_u - \mathbf{1}_v\), and we have
\begin{equation}\label{eq:edge-based-eff-resistance}
r_e = b_e^\top L^{+} b_e = (\mathbf{1}_u - \mathbf{1}_v)^{\top} L^{+} (\mathbf{1}_u - \mathbf{1}_v) = r_{uv}.
\end{equation}
The \emph{vertex-based effective resistance matrix} is defined as the matrix with elements $\left(r_{uv}\right)_{u,v\in V}$. 

Effective resistance in graphs admits several interpretations. Pairs of vertices with small effective resistance between them are typically connected by paths with small total weights, reflecting a high ease of current flow between them~\cite{SpielmanSrivastava2011}. Physically, effective resistance measures how difficult it is for current to travel between two vertices, as detailed in \cref{sec:circuit-theory}.

\subsection{Effective Resistance in Graphs as Bilinear Forms}
\label{sec:resistance-operator}

We now reinterpret the effective resistance in graphs through bilinear forms on chain and cochain spaces. By interpreting voltages as coboundaries and currents as boundaries, and by encoding Ohm's law and Kirchhoff's laws algebraically, we arrive at expressions for the effective resistance that are independent of any particular basis. These expressions define symmetric, positive semi-definite bilinear forms on the spaces of 1-chains and 0-boundaries, which in turn give rise to edge-based and vertex-based effective resistance matrices under appropriate choices of bases. 

\subsubsection{Circuit Theory via Chain and Cochain Complexes}
\label{sec:circuit-theory}

We review the algebraic formulation of circuit theory following \cite{BaezFong2018Compositional,BambergSternberg1988Course}.
Let \( G = (V, E) \) be a finite oriented graph with \( n_0 \) vertices and \( n_1 \) edges, representing an electrical circuit. We select subsets \( V_{\mathrm{in}} \subseteq V \) and \( V_{\mathrm{out}} \subseteq V \) as the sets of \emph{input} and \emph{output} vertices, where current is injected and extracted respectively. Their union
\[
\partial G \coloneqq V_{\mathrm{in}} \cup V_{\mathrm{out}}
\]
is called the set of \emph{terminals}. The remaining vertices are called \emph{interior} or \emph{non-terminal}.

To incorporate physical intuition into our model, we define edge weights based on the circuit’s physical properties, following standard approach (e.g., \cite{SpielmanSrivastava2011}). Suppose each edge \( e \in E \) is assigned a \emph{resistance} \( \rho_e > 0 \) (not to be confused with the \emph{effective resistance} \( r_e \)). We model the circuit \( G \) as a weighted 1-dimensional simplicial complex in which vertices have unit weight, and edges are weighted by the \emph{inverse} of their resistances, referred to as \emph{conductances}, and denoted as \( w(e) = \rho_e^{-1} \).

We use the conductance weights to define an inner product on the space of 1-chains \( C_1(G) \), as described in \cref{sec:linear-algebra-chains-cochains}. With respect to this inner product, the standard edge basis is orthogonal, and for any edge $e$,
\begin{equation}\label{eq:resistance inner product}
    \langle e, e \rangle_{C_1} = w(e)^{-1} = \rho_e.
\end{equation}
For the space of 0-chains \( C_0(G) \), we assume unit weights and thus use the standard inner product, under which the vertex basis is orthonormal.

\begin{remark}
    In our model, resistance can be viewed as the effect of the circuit being curved by the weights, analogous to how gravity arises from the curvature of spacetime caused by mass. In both cases, the underlying geometry is described by a symmetric bilinear form, with resistance playing a role similar to that of a metric, determining how difficult it is for current or matter to move between points.
\end{remark}

Given an electrical circuit, its governing physical quantities are the \emph{voltage}, which can be described by a 1-cochain $f \in C^1(G)$, and the \textit{current}, described by a 1-chain $\chainOne \in C_1(G)$. Let \(\iota\) denote the inclusion of \(\partial G\) into \(G\), and by an abuse of notation, we also write \(\iota \colon C_0(\partial G) \to C_0(G)\) for the induced map on chain spaces. These are subject to several physical laws (see \cite[Chapter 12]{BambergSternberg1988Course} or \cite[Section 2.2]{BaezFong2018Compositional}) that we summarize below.
\begin{enumerate}[label=(\arabic*)]
    \item \textbf{Kirchhoff's voltage law}: A (physically valid) voltage \( f \in C^1(G) \) is a coboundary; that is,
    \[
    f = \delta_0 \phi
    \]
    for some (not necessarily unique) \( \phi \in C^0(G) \) called the \emph{potential}. This is equivalent to the condition that \( f \) evaluates to zero on every closed path (i.e., 1-cycle).

    \item \textbf{Kirchhoff's current law}: A (physically valid) current \( \chainOne  \in C_1(G) \) satisfies
    \[
    \partial_1 \chainOne  = \iota\chainTwo,
    \]
    for some \( \chainTwo \in C_0(\partial G) \) called the \emph{boundary current}. 
    The above formula expresses conservation of current: net flow vanishes at interior vertices and is non-zero only at terminals. When \( \partial G = \emptyset \), the circuit is closed and the above condition  reduces to \( \partial_1 \chainOne = 0 \).

    \item \textbf{Ohm's law}: Given that resistances define the inner product on \( C_1(G) \) (see \cref{eq:resistance inner product}), and using the flat isomorphism (see \cref{def:flat-sharp iso}), Ohm’s law takes the form
    \[
    f = \flat_1 \chainOne,
    \]
    relating \( f \in C^1(G) \) and \( \chainOne \in C_1(G) \). 
    
    When Ohm's Law holds, given either \( f \) or \( \chainOne \), the other is uniquely determined via the inner product induced by the resistances. 
\end{enumerate}

In what follows, we assume that all voltages and currents are cochains and chains, respectively, that satisfy Kirchhoff’s laws and the Ohm law. Vice versa, in \cref{prop:potential_and_voltage_law}, we characterize when chains and cochains can be considered currents and voltages that satisfies Kirchhoff's and Ohm's laws.

\begin{remark}[Interpretation of Kirchhoff's current law]\label[remark]{rem:kcl-weak-form}
We explain why Kirchhoff's current law, i.e.~conservation of current, can be expressed as \( \partial_1 \chainOne  = \iota\chainTwo \) for some \( \chainTwo \in C_0(\partial G) \), where \( \chainOne  \in C_1(G) \). 

Expressing the current as a linear combination of the standard basis of edges \(\alpha = \sum_{e \in E} \alpha_e \, e\), we can compute its boundary and evaluate at some vertex $x \in V$ to obtain its component over that vertex. Namely, we compute
\begin{equation}
    \label{eq:remark-kirchhof-current-law}
    \langle \partial_1 \alpha, x\rangle_{C_0} = \sum_{(u, x) \in E} \alpha_e - \sum_{(x, v) \in E} \alpha_e. 
\end{equation}

Kirchhoff's current law states that the input current must equal the output current in all internal vertices, i.e., \cref{eq:remark-kirchhof-current-law} vanishes if $x \in V$ is internal; while there is a positive excess, denoted $\beta_x>0$ at $x \in V_{\mathrm{in}}$ corresponding to the injected current, and a negative deficiency, $\beta_x<0$ at $x \in V_{\mathrm{out}}$ corresponding to the extracted current. This information can be collected in a 0-chain $\beta=\sum_{v \in V} \beta_v \, v$, with $\beta_v = 0$ for all internal vertices, which satisfies
\begin{equation}
    \label{eq:remark-kirchhof-current-law-2}
    \langle \beta, x \rangle_{C_0} = \beta_x.
\end{equation}
Putting together \cref{eq:remark-kirchhof-current-law} and \cref{eq:remark-kirchhof-current-law-2}, we obtain that for all $x \in V$,
\[\langle \partial_1 \alpha, x\rangle_{C_0} = \langle \iota\beta, x \rangle_{C_0},\]
which implies \( \partial_1 \chainOne  = \iota\chainTwo \). We emphasize that for a different choice of the inner product on the vertices, Kirchhoff's current law would not hold in its usual formulation. Thus, the choice of unit weights on the vertices is necessary to retrieve Kirchhoff's current law in its classical form.
\end{remark}

The following proposition extends the relationship between Kirchhoff's voltage law (KVL), Kirchhoff's current law (KCL), and Ohm's law (OL) to higher dimensions, and characterizes when chain and cochains can be interpreted as currents, voltages, or potentials. Previous work \cite{Catanzaro2015} developed a similar result to obtain insights related to Reidemeister torsion. However, our setting differs in that voltages are cochains rather than chains, necessitating the proposition below.

\begin{proposition}
\label[proposition]{prop:potential_and_voltage_law}
    Given  \( \chainTwo \in C_{p-1}(K) \), the system of equations
    \begin{equation}\label{eq:circuit-system}
   \left\{\begin{array}{lcc}
   \delta_{p-1}\phi=f&\quad & \text{(KVL)}\\
   \partial_p\alpha=\beta&\quad & \text{(KCL)}\\
   \flat_p\alpha=f &\quad & \text{(OL)}
   \end{array}\right. 
       \end{equation}
    in the unknowns \(\chainOne \in C_{p}(K)\),  \(f\in C^{p}(K)\), and \(\phi \in C^{p-1}(K)\) admits solutions if and only if the equation 
    \begin{equation}\label{eq:laplacian_potential_boundary_current}
      \cochainLup{p-1} \phi = \flat_{p-1}\beta 
    \end{equation} in the unknown $\phi$ does.
\end{proposition}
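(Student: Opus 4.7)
The plan is to verify both directions by direct algebraic manipulation, using the key commutativity relation $\delta_{p-1}^\adjoint \circ \flat_p = \flat_{p-1} \circ \partial_p$ supplied by \cref{lem:commutativity of operators}, together with the fact that $\mathcal{L}_{\mathrm{up}}^{p-1} = \delta_{p-1}^\adjoint \delta_{p-1}$ by definition.

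For the forward direction, I start with a triple $(\alpha, f, \phi)$ solving the circuit system \eqref{eq:circuit-system} and compute
\[
\cochainLup{p-1}\phi = \delta_{p-1}^\adjoint \delta_{p-1}\phi = \delta_{p-1}^\adjoint f = \delta_{p-1}^\adjoint \flat_p \alpha,
\]
where the three equalities use the definition of the up-Laplacian, KVL, and OL in turn. Then I apply \cref{lem:commutativity of operators} to rewrite $\delta_{p-1}^\adjoint \flat_p = \flat_{p-1} \partial_p$, and finally invoke KCL:
\[
\delta_{p-1}^\adjoint \flat_p \alpha = \flat_{p-1} \partial_p \alpha = \flat_{p-1}\beta.
\]
This shows $\phi$ solves \eqref{eq:laplacian_potential_boundary_current}.

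For the converse, I start from a $\phi$ satisfying \eqref{eq:laplacian_potential_boundary_current} and construct candidate solutions by setting $f := \delta_{p-1}\phi$ and $\alpha := \flat_p^{-1}(f)$. By construction KVL and OL hold. For KCL, I compose the identity $\flat_{p-1} \partial_p = \delta_{p-1}^\adjoint \flat_p$ with $\flat_p^{-1}$ on the right and with $\flat_{p-1}^{-1}$ on the left to obtain $\partial_p \flat_p^{-1} = \flat_{p-1}^{-1}\delta_{p-1}^\adjoint$. Then
\[
\partial_p \alpha = \flat_{p-1}^{-1}\delta_{p-1}^\adjoint \delta_{p-1}\phi = \flat_{p-1}^{-1}\cochainLup{p-1}\phi = \flat_{p-1}^{-1}\flat_{p-1}\beta = \beta,
\]
as required.

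In fact I do not anticipate any real obstacle: the proposition is essentially a translation, via the musical isomorphisms, between the system formulation (KVL/KCL/OL) and the single Poisson-type equation on potentials. The only point requiring care is to apply the correct commutativity identity from \cref{lem:commutativity of operators}, since both $\flat_p\circ\partial_p^\adjoint=\delta_{p-1}\circ\flat_{p-1}$ and $\flat_{p-1}\circ\partial_p=\delta_{p-1}^\adjoint\circ\flat_p$ are available and only the latter is relevant here.
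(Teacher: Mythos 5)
Your proposal is correct and follows essentially the same route as the paper's own proof: both directions reduce to the commutativity identity \(\flat_{p-1}\circ\partial_p=\delta_{p-1}^{\adjoint}\circ\flat_p\) from \cref{lem:commutativity of operators}, and your converse construction \(f=\delta_{p-1}\phi\), \(\alpha=\flat_p^{-1}f\) is exactly the paper's choice. Nothing is missing; the two arguments differ only in the order of algebraic substitutions.
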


\begin{proof}
 Let us first assume that  \(\chainOne \in C_{p}(K)\),  \(f\in C^{p}(K)\), and \(\phi \in C^{p-1}(K)\) exist that satisfy the system for a given \(\beta\). Then, (OL) and the fact that \(\flat_p\) is an isomorphism imply that \(\alpha= \flat_p^{-1}f\). Hence, (KVL) implies  \(\alpha=\flat_p^{-1}\delta_{p-1}\phi\). By (KCL), we have \(\beta=\partial_p\flat_p^{-1}\delta_{p-1}\phi\) that, by the commutativity of the diagram 
 \begin{equation}\label{eq:commutative}
     \begin{tikzcd}[column sep=large]
    C^{p-1}(K) 
    & C^{p}(K) \arrow[l, "\delta^{\adjoint}_{p-1} " above, shift left=0.6ex] 
    \\
    C_{p-1}(K) \arrow[u, "\flat_{p-1}" left]   
    & C_{p}(K), \arrow[l, "\partial_{p}"] 
    \arrow[u, "\flat_{p}" right] 
    \end{tikzcd}
 \end{equation}
 is equivalent to \(\beta=\flat_{p-1}^{-1}\delta^*_{p-1}\delta_{p-1}\phi\), that is $\cochainLup{p-1} \phi = \flat_{p-1}\beta$.

 Vice versa, let us assume that  \(\phi \in C^{p-1}(K)\) exists that satisfy the equation \(\cochainLup{p-1} \phi = \flat_{p-1}\beta\) with $\beta$ given. Then, reversing the argument above, \(\beta=\partial_p\flat_p^{-1}\delta_{p-1}\phi\). So, taking \(\alpha=\flat_p^{-1}\delta_{p-1}\phi\) we get a solution for equation (KCL); taking $f=\flat_p\alpha$ we get a solution for equation (OL); moreover, this choice of \(f\) and $\alpha$ also satisfies (KVL) because \(f=\flat_p\alpha=\flat_p\flat_p^{-1}\delta_{p-1}\phi=\delta_{p-1}\phi\). 
\end{proof}

\begin{corollary} \label[corollary]{cor:potential-existence}  
Let \(G=(V,E)\) be a connected graph. If \(\beta=\sum_{u\in V}\lambda_uu\in C_0(G)\) satisfies $\sum_{u\in V}\lambda_u=0$, then \(\beta\) is a valid boundary current in the sense that the linear system (\cref{eq:circuit-system}) in the unknown $\alpha$, $\phi$, and $f$, with $p=1$, admits a solution. 
\end{corollary}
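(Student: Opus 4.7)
The plan is to invoke \cref{prop:potential_and_voltage_law} with $p=1$ to reduce the existence of a solution of the circuit system (\cref{eq:circuit-system}) to the existence of a potential $\phi \in C^0(G)$ satisfying the Poisson-type equation
\[
\cochainLup{0}\phi = \flat_0\beta.
\]
This is a linear equation in a finite-dimensional inner product space, so it admits a solution if and only if $\flat_0\beta \in \image(\cochainLup{0})$. Since $\cochainLup{0}=\delta_0^\adjoint\delta_0$ is self-adjoint and positive semi-definite, standard linear algebra (or the Hodge decomposition of \cref{eq:Hodge_decomposition} applied at $p=0$, noting $\Lcal_{\mathrm{down}}^0=0$) gives
\[
\image(\cochainLup{0}) \;=\; \bigl(\ker \cochainLup{0}\bigr)^{\perp} \;=\; \bigl(\ker \delta_0\bigr)^{\perp},
\]
so it suffices to check that $\flat_0\beta$ is orthogonal to $\ker\delta_0$ in $C^0(G)$.

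Next I would identify $\ker\delta_0$ explicitly using connectedness of $G$. Since $\delta_0$ sends a 0-cochain $\phi$ to the cochain whose value on an oriented edge $(u,v)$ is $\phi(v)-\phi(u)$, a cochain in $\ker\delta_0$ must be constant on each connected component. Because $G$ is connected, $\ker\delta_0$ is one-dimensional, spanned by the indicator cochain $\mathbf{1}^\dual \coloneqq \sum_{u\in V} u^\dual$. Therefore the orthogonality condition on $\flat_0\beta$ reduces to the single scalar equation
\[
\langle \flat_0\beta,\, \mathbf{1}^\dual\rangle_{C^0} = 0.
\]

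Finally, I would compute this inner product directly. Using the unit vertex weights and the definition of the cochain inner product (\cref{eq:inner product cochain}), we have $\flat_0\beta = \sum_{u\in V}\lambda_u\, u^\dual$ and $\langle u^\dual,v^\dual\rangle_{C^0} = \delta_{uv}$, so
\[
\langle \flat_0\beta,\, \mathbf{1}^\dual\rangle_{C^0} \;=\; \sum_{u\in V}\lambda_u,
\]
which vanishes precisely by the hypothesis $\sum_{u\in V}\lambda_u = 0$. Hence $\flat_0\beta \in \image(\cochainLup{0})$, a solution $\phi$ exists, and \cref{prop:potential_and_voltage_law} then yields the desired $\alpha$ and $f$. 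The only mildly delicate step is the identification $\ker\cochainLup{0}=\ker\delta_0$ together with its explicit description via connectedness; everything else is a straightforward bookkeeping of the adjoint structure already set up in \cref{sec:linear-algebra-chains-cochains}.
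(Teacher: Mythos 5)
Your proposal is correct and follows essentially the same route as the paper's proof: reduce via \cref{prop:potential_and_voltage_law} to solvability of \(\cochainLup{0}\phi=\flat_0\beta\), use connectedness to identify \(\ker\cochainLup{0}\) as the span of \(\sum_{u\in V}u^\dual\), and verify \(\langle \flat_0\beta,\sum_{u\in V}u^\dual\rangle_{C^0}=\sum_{u\in V}\lambda_u=0\) so that \(\flat_0\beta\in(\ker\cochainLup{0})^\perp=\image\cochainLup{0}\) by self-adjointness. Your only addition is spelling out the intermediate identification \(\ker\cochainLup{0}=\ker\delta_0\) and its description via constancy on components, which the paper leaves implicit.
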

 
\begin{proof}
By \cref{prop:potential_and_voltage_law}, it is enough to prove that  the equation $ \cochainLup{0} \phi = \flat_0^G\beta$ admits a solution. To see this, we note that by connectedness $\ker \cochainLup{0}=\mathrm{ span}(\sum_{u\in V}u^\dual)$. Moreover, we observe that, given the unit weights in the vertices, we have \[\langle \sum_{u\in V}u^\dual, \flat_0^G\beta\rangle_{C^0}=\sum_{u\in V}\lambda_u=0\ .\] 
Hence, $\flat_0^G\beta\in (\ker \cochainLup{0})^\perp =\mathrm{im}(\cochainLup{0})^*$. Because $\cochainLup{0}$ is self-adjoint, we have proved that $\flat_0^G\beta\in\image\cochainLup{0}$, resulting in the existence of $\phi$ solving $ \cochainLup{0} \phi = \flat_0^G\beta$.
\end{proof}

\begin{corollary}\label[corollary]{coro:voltage}
    For $p\ge 1$, let  $\chainTwo\in C_{p-1}( K)$ be a boundary current,  that is, Kirchhoff's current law is satisfied: $\partial_p \chainOne  = \chainTwo$ for some $\chainOne\in C_p(K)$. Let $f\in C^p(K)$ be obtained by applying Ohm's law: $\flat_p\alpha=f$. Then $f$ is a voltage, i.e.,~it satisfies Kirchhoff's voltage law: there exists a potential $\phi\in C^{p-1}(K)$ such that $\delta_{p-1}(\phi)=f$. 
\end{corollary}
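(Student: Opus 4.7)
The plan is to apply \cref{prop:potential_and_voltage_law}, which reduces the existence of a potential $\phi$ satisfying $\delta_{p-1}\phi=f$ (together with the rest of the circuit system \cref{eq:circuit-system}) to the solvability of the single Laplacian equation $\cochainLup{p-1}\phi = \flat_{p-1}\beta$. So my task reduces to verifying that this equation admits a solution.

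The key step is to express $\flat_{p-1}\beta$ in a form that visibly lies in $\image\cochainLup{p-1}$. Using the commutativity $\flat_{p-1}\circ\partial_p=\delta^\adjoint_{p-1}\circ\flat_p$ from \cref{lem:commutativity of operators}, together with the hypotheses $\partial_p\alpha=\beta$ and $f=\flat_p\alpha$, I compute
\[
\flat_{p-1}\beta \;=\; \flat_{p-1}\partial_p\alpha \;=\; \delta^\adjoint_{p-1}\flat_p\alpha \;=\; \delta^\adjoint_{p-1}f,
\]
exhibiting $\flat_{p-1}\beta\in\image\delta^\adjoint_{p-1}$. I would then connect this with the up Laplacian via the standard identity $\image(A^\adjoint A)=\image A^\adjoint$, which holds for any linear map $A$ between finite-dimensional inner-product spaces: it follows from $\ker(A^\adjoint A)=\ker A$ (immediate from $\|Av\|^{2}=\langle A^\adjoint Av,\,v\rangle$) by passing to orthogonal complements. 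Applied to $A=\delta_{p-1}$, this gives $\image\cochainLup{p-1}=\image\delta^\adjoint_{p-1}$, so $\flat_{p-1}\beta\in\image\cochainLup{p-1}$ and the Laplacian equation is solvable.

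Having produced a $\phi$ satisfying $\cochainLup{p-1}\phi=\flat_{p-1}\beta$, \cref{prop:potential_and_voltage_law} furnishes the full triple solving the circuit system and, in particular, the desired identity $\delta_{p-1}\phi=f$. The subtle point — and what I view as the main obstacle — is that the proposition's reverse-direction construction produces a specific current $\alpha' = \flat_p^{-1}\delta_{p-1}\phi$ lying in $\image\partial_p^\adjoint$, which need not coincide with the arbitrary $\alpha$ chosen in the hypothesis. Both $\alpha$ and $\alpha'$ satisfy Kirchhoff's current law for the same $\beta$, so they differ only by an element of $\ker\partial_p$, and the commutativity $\flat_p\circ\partial_p^\adjoint=\delta_{p-1}\circ\flat_{p-1}$ makes $\flat_p\alpha'$ manifestly a coboundary. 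Identifying $\alpha$ with the canonical representative in $\image\partial_p^\adjoint=(\ker\partial_p)^\perp$ — the unique minimum-norm current realizing $\beta$, in accordance with the physical convention implicit in the corollary — yields $f=\flat_p\alpha'=\delta_{p-1}\phi$ and closes the argument.
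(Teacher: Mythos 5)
Your main computation is the paper's own: reduce via \cref{prop:potential_and_voltage_law}, obtain $\flat_{p-1}\chainTwo=\delta_{p-1}^{\adjoint}f$ from \cref{lem:commutativity of operators}, and conclude solvability of $\cochainLup{p-1}\phi=\flat_{p-1}\chainTwo$ from $\image(\delta_{p-1}^{\adjoint})=\image(\cochainLup{p-1})$ (the paper cites this as \cref{eq:adjoint-image}; your derivation from $\kernel(A^{\adjoint}A)=\kernel A$ plus orthogonal complements is a correct re-proof of the same fact). Where you diverge is the final step, and your ``subtle point'' is not pedantry: the paper's proof ends with ``the claim now follows from \cref{eq:circuit-system}'' and silently skips exactly the mismatch you identify. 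Solvability of the Laplacian equation only yields $\delta_{p-1}^{\adjoint}(\delta_{p-1}\phi-f)=0$, i.e.\ $\delta_{p-1}\phi$ and $f$ agree modulo $\kernel(\delta_{p-1}^{\adjoint})$, and the triple produced by the reverse direction of \cref{prop:potential_and_voltage_law} carries the voltage $f'=\delta_{p-1}\phi$ attached to $\chainOne'=\flat_p^{-1}\delta_{p-1}\phi\in\image(\partial_p^{\adjoint})$, not to the given $\chainOne$.

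Moreover, your repair is necessary, not merely a convention you are free to adopt: as literally stated (arbitrary $\chainOne$ with $\partial_p\chainOne=\chainTwo$), the conclusion fails. If $\chainOne$ is any nonzero $p$-cycle (so $\chainTwo=0$ is a valid boundary current), then $f=\flat_p\chainOne$ satisfies $f(\chainOne)=\langle\chainOne,\chainOne\rangle_{C_p}>0$, whereas every coboundary vanishes on cycles; hence no potential can exist. In general, $f=\flat_p\chainOne\in\image(\delta_{p-1})$ if and only if $\chainOne\in\image(\partial_p^{\adjoint})=(\kernel\partial_p)^{\perp}$, by the identity $\delta_{p-1}\circ\flat_{p-1}=\flat_p\circ\partial_p^{\adjoint}$. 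Your minimum-norm hypothesis is exactly this condition; it is also what the paper actually uses downstream (in \cref{sec:er_by_kook_lee} the current $I$ is constructed to lie in $\kernel(\partial_d^X)^{\perp}$), so your reading of the ``physical convention'' is the right one. Note that under this hypothesis the detour through \cref{prop:potential_and_voltage_law} becomes unnecessary: writing $\chainOne=\partial_p^{\adjoint}\gamma$ for some $\gamma\in C_{p-1}(K)$ gives, in one line, $f=\flat_p\partial_p^{\adjoint}\gamma=\delta_{p-1}(\flat_{p-1}\gamma)$. In summary, your proposal is correct as a proof of the (necessarily) amended statement; the gap you isolated is in the paper's statement and its proof, not an artifact of your argument.
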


\begin{proof}
    By the assumptions and the commutativity of Diagram \eqref{eq:commutative}, we have $\flat_{p-1}\beta=\flat_{p-1}\partial_p\alpha=\delta_{p-1}^*\flat_p\alpha=\delta_{p-1}^*f$. 
    Hence, $\flat_{p-1}\beta\in \mathrm{ im}(\delta_{p-1}^*)$. By \cref{eq:adjoint-image}, $\mathrm{ im}(\delta_{p-1}^*)=\mathrm{ im}(\cochainLup{p-1})$. So, there exists $\phi\in C^{p-1}(K)$ such that $\cochainLup{p-1}\phi=\flat_{p-1}\beta$. The claim now follows from \cref{eq:circuit-system}.
\end{proof}

\subsubsection{Vertex-Based Effective Resistance in Graphs as Bilinear Forms}

We represent the effective resistance in graphs through bilinear forms. We start with the vertex-based one.

Let $G$ be a connected graph. Given $u \in V_{\mathrm{in}}$ and $v \in V_{\mathrm{out}}$, let $\chainTwo = u-v \in C_0(\partial G) \subset C_0(G)$. Since $G$ is connected, $\chainTwo \in \image \partial_1^{ G}$. Let $\phi\in C^0(G)$ be the solution of \cref{eq:laplacian_potential_boundary_current} we get from $\chainTwo$ and the graph, assuming the electric circuit laws hold.
We define
\begin{equation}
    \label{eq:effective_resistance_nodes}
    \tilde{r}_{uv} \coloneqq \langle \phi,   \flat_0^{G}\chainTwo\rangle_{C^0},
\end{equation}
which represents the resistance created by the entire graph between $u$ and $v$, measured as the potential difference we get between them when we inject a unit current in $u$ and extract a unit current in $v$ using $\chainTwo$.
In \cref{prop:vertex-bases matrix of caR}, we will show that \( \tilde{r}_{uv} \) coincides with the effective resistance \( r_{uv} \) as defined in \cref{eq:devriendtER}.

We first reformulate $\tilde{r}_{uv}$ in terms of chains, using the musical isomorphisms and the pseudoinverse of the 0-th Laplacian.

\begin{lemma}\label[lemma]{prop:tilde_r_uv}
Let \( u \in V_{\mathrm{in}} \), \( v \in V_{\mathrm{out}} \) so that \(u-v\in C_0(\partial G)\), and let \( \chainOne \in C_1(G) \) satisfy \( \partial_1 \chainOne = \iota(u - v) \in C_0( G) \). Then \( \tilde{r}_{uv} \) can be expressed as the value of an inner product in \( C_0 \):
\begin{equation}\label{eq:tilde_r_uv_C_0}
    \tilde{r}_{uv} = \langle(\chainLup{0})^+ \partial_1 \chainOne,\, \partial_1 \chainOne \rangle_{C_0},
\end{equation}
and equivalently as the value of an inner product in \( C_1 \):
\begin{equation}\label{eq:tilde_r_uv_C_1}
    \tilde{r}_{uv} = \langle\partial_1^\adjoint (\chainLup{0})^+ \partial_1 \chainOne,\, \chainOne \rangle_{C_1}.
\end{equation}
\end{lemma}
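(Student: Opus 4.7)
The plan is to unpack the defining formula \cref{eq:effective_resistance_nodes} for $\tilde r_{uv}$, transport it along the musical isomorphisms until it becomes an inner product on the chain space, and then substitute $\partial_1\alpha$ for the boundary current.

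Set $\beta = u-v$. By definition,
\[\tilde r_{uv}=\langle\phi,\flat_0^G\beta\rangle_{C^0},\]
where $\phi$ is any solution of $\cochainLup{0}\phi=\flat_0^G\beta$; existence is guaranteed by \cref{cor:potential-existence}. Although $\phi$ is only determined up to $\ker\cochainLup{0}$, the value of this inner product does not depend on the choice: since $\cochainLup{0}$ is self-adjoint, $\flat_0^G\beta\in\image\cochainLup{0}=(\ker\cochainLup{0})^\perp$, so any additive kernel element in $\phi$ contributes zero. I can thus fix the canonical representative $\phi=(\cochainLup{0})^+\flat_0^G\beta$, obtaining
\[\tilde r_{uv}=\langle (\cochainLup{0})^+\flat_0^G\beta,\flat_0^G\beta\rangle_{C^0}.\]

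Next, I apply \cref{lem:L pseudoinverse chains cochains}(2) to rewrite $(\cochainLup{0})^+=\flat_0(\chainLup{0})^+\flat_0^{-1}$. Since vertices carry unit weight, $\flat_0=\flat_0^G$ on $C_0(G)$, so $(\cochainLup{0})^+\flat_0^G\beta=\flat_0(\chainLup{0})^+\beta$. Using that $\flat_0$ preserves inner products (\cref{lem:music}),
\[\tilde r_{uv}=\langle\flat_0(\chainLup{0})^+\beta,\flat_0\beta\rangle_{C^0}=\langle(\chainLup{0})^+\beta,\beta\rangle_{C_0}.\]
By Kirchhoff's current law, $\partial_1\alpha=\iota\beta$, and under the inclusion $\iota$ the two chains are identified in $C_0(G)$, so replacing $\beta$ with $\partial_1\alpha$ yields \cref{eq:tilde_r_uv_C_0}. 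Finally, \cref{eq:tilde_r_uv_C_1} follows directly from the defining property of the adjoint $\partial_1^\adjoint$:
\[\langle(\chainLup{0})^+\partial_1\alpha,\partial_1\alpha\rangle_{C_0}=\langle\partial_1^\adjoint(\chainLup{0})^+\partial_1\alpha,\alpha\rangle_{C_1}.\]

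The only real (minor) obstacle is justifying that $\langle\phi,\flat_0^G\beta\rangle_{C^0}$ is well-defined despite the non-uniqueness of $\phi$; once this is dispatched via the self-adjointness of $\cochainLup{0}$, the rest of the argument is a clean transport through the musical isomorphisms using results already established in the paper.
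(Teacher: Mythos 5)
Your proof is correct and takes essentially the same route as the paper's: both pass from the defining formula \cref{eq:effective_resistance_nodes} to $\phi = (\cochainLup{0})^+\flat_0^G\iota\beta$ via the pseudoinverse, transport the inner product to $C_0$ using \cref{lem:L pseudoinverse chains cochains} (the paper applies $(\flat_0^G)^{-1}$ directly where you invoke \cref{lem:music}, an equivalent step), and obtain \cref{eq:tilde_r_uv_C_1} from the adjoint of $\partial_1$. Your extra observation that $\langle \phi, \flat_0^G\beta\rangle_{C^0}$ is independent of the choice of solution $\phi$, since $\flat_0^G\beta \in \image(\cochainLup{0}) = (\ker\cochainLup{0})^\perp$, is a small but genuine clarification that the paper's proof leaves implicit by fixing the pseudoinverse solution.
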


\begin{proof}
For notational simplicity, we write $\chainTwo = u - v \in C_0( G)$. Let \(\phi \in C^0(G)\) be a potential that solves the circuit problem as described in system \cref{eq:circuit-system}, whose existence is guaranteed by \cref{cor:potential-existence}. Then by \cref{prop:potential_and_voltage_law}, we have \(\cochainLup{0}\phi=\flat_0^{G}\iota \beta\). Hence, \(\flat_0^{G}\iota \beta \in \operatorname{im}(\cochainLup{0}) = \ker(\cochainLup{0})^\perp. \)
From the definition of the pseudoinverse operator (cf.~\cref{def:pseudoinverse}) we get
\[\phi = (\cochainLup{0})^+  \flat_0^{ G} \iota \chainTwo. \] 
Plugging this into the expression for effective resistance (from \cref{eq:effective_resistance_nodes}) and applying \cref{lem:L pseudoinverse chains cochains}, we obtain:
\[
\tilde{r}_{uv} 
= \langle ((\cochainLup{0})^+ \flat_0^{ G} \iota\chainTwo, \flat_0^{ G}\iota \chainTwo \rangle_{C^0}
= \langle((\flat_0^{ G})^{-1}(\cochainLup{0})^+ \flat_0^{ G} \partial_1 \chainOne,\, \partial_1 \chainOne \rangle_{C_0}
= \langle(\chainLup{0})^+ \partial_1 \chainOne,\, \partial_1 \chainOne \rangle_{C_0}.
\]
This yields the expression in \cref{eq:tilde_r_uv_C_0}.
Applying the adjoint of $\partial_1$, we can express the same quantity as an inner product in \( C_1 \) and obtain \cref{eq:tilde_r_uv_C_1}.
\end{proof}

\begin{proposition}\label[proposition]{prop:vertex-bases matrix of caR}
    Consider a connected graph \( G \) and vertices \( u, v \in V \). Then,
    \[
        \tilde{r}_{uv} = r_{uv},
    \]
    where \( r_{uv} \) is the standard effective resistance between vertices $u$ and $v$ from \cref{eq:devriendtER}.
\end{proposition}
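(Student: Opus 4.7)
The plan is to start from the chain-space expression for $\tilde{r}_{uv}$ provided by \cref{prop:tilde_r_uv}, pass to the matrix representation in the standard vertex basis, and recognize the result as exactly $r_{uv}$ from \cref{eq:devriendtER}. The argument is essentially a translation exercise once one keeps track of the unit vertex weights.

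First, I would apply \cref{prop:tilde_r_uv} to write
\[
\tilde{r}_{uv} = \langle (\chainLup{0})^+ \partial_1 \alpha,\, \partial_1 \alpha \rangle_{C_0},
\]
for any $\alpha \in C_1(G)$ with $\partial_1 \alpha = \iota(u-v)$; the existence of such an $\alpha$ is guaranteed by connectedness of $G$ together with \cref{cor:potential-existence}. Note that the right-hand side does not depend on the choice of $\alpha$, since any two such choices differ by a $1$-cycle, which lies in $\ker \partial_1$.

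Next, I would express everything in the standard basis $\Bcal_0$. Because vertex weights are unit, $W_0 = \mathrm{Id}$ and the inner product matrix $[\langle \cdot,\cdot\rangle_{C_0}]_{\Bcal_0,\Bcal_0}$ is the identity; hence $\langle x, y \rangle_{C_0}$ coincides with the standard Euclidean product of the coordinate vectors. By \cref{eq:L_up_chain}, the matrix of $\chainLup{0}$ in $\Bcal_0$ is
\[
[\chainLup{0}]_{\Bcal_0}^{\Bcal_0} = B_1 W_1 B_1^\top = L,
\]
which is precisely the graph Laplacian of \cref{eq:graph-laplacian}. Since $\Bcal_0$ is orthonormal (again because $W_0 = \mathrm{Id}$), the matrix of the pseudoinverse is the pseudoinverse of the matrix, so $[(\chainLup{0})^+]_{\Bcal_0}^{\Bcal_0} = L^+$. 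Finally, the coordinate vector of $\partial_1 \alpha = \iota(u-v)$ in $\Bcal_0$ is exactly $\mathbf{1}_u - \mathbf{1}_v$.

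Putting these ingredients together yields
\[
\tilde{r}_{uv} = (\mathbf{1}_u - \mathbf{1}_v)^\top L^+ (\mathbf{1}_u - \mathbf{1}_v) = r_{uv},
\]
which is the claim. I do not expect a serious obstacle: the only subtlety worth flagging is the compatibility of pseudoinverses with matrix representations, which holds here because $\Bcal_0$ is orthonormal under $\langle\cdot,\cdot\rangle_{C_0}$; if this is not desirable to invoke directly, one may alternatively verify the identity by checking that $L^+$ and $[(\chainLup{0})^+]_{\Bcal_0}^{\Bcal_0}$ satisfy the four Moore--Penrose conditions with respect to $L$.
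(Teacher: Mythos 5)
Your proof is correct, and it takes a slightly different route from the paper's. The paper starts from $r_{uv}$ and works on the \emph{cochain} side: it reads $\mathbf{1}_u$ as the coordinate vector of the cochain $\flat_0^{G}\iota u$, identifies $L^+$ with the matrix of $(\cochainLup{0})^+$, and then transports everything to the chain side via the musical-isomorphism identity $(\cochainLup{0})^+ = \flat_0 (\chainLup{0})^+ \flat_0^{-1}$ of \cref{lem:L pseudoinverse chains cochains}, landing on \cref{eq:tilde_r_uv_C_0}. You instead start from \cref{eq:tilde_r_uv_C_0} and never leave the chain side: you read $\mathbf{1}_u - \mathbf{1}_v$ as the coordinate vector of the chain $u-v$ in $\Bcal_0$, use \cref{eq:L_up_chain} to get $[\chainLup{0}]_{\Bcal_0}^{\Bcal_0} = L$, and invoke \cref{prop:pseudo_inverse_matrix_from_op} (with identity Gram matrices, since $W_0 = \mathrm{Id}$ makes $\Bcal_0$ orthonormal) to commute the pseudoinverse with the matrix representation. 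Your version is arguably more elementary, as it avoids the cochain detour and the transport lemma; the paper's version showcases the flat/sharp machinery that the rest of the paper relies on. Both arguments hinge on the same hypothesis of unit vertex weights, which you correctly flag as the reason the Gram matrix is the identity. Two minor remarks: existence of $\alpha$ with $\partial_1\alpha = u - v$ follows directly from connectedness (so that $u - v \in B_0(G) = \image \partial_1$), which is what the paper uses; citing \cref{cor:potential-existence} is not wrong, since the circuit system there includes the equation $\partial_1\alpha = \beta$, but it is an indirect route. And your final coordinate computation implicitly uses that $x^\top (L^+)^\top x = x^\top L^+ x$, which holds automatically for any quadratic form (and in any case $L^+$ is symmetric because $L$ is), so there is no gap there.
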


\begin{proof}
    Let \( \alpha \in C_1(G) \) be any 1-chain such that \( \partial_1 \alpha = u - v \in B_0(G) \). From \cref{eq:devriendtER}, the effective resistance between \( u \) and \( v \) is given by
    \begin{align*}
        r_{uv} 
        & = (\mathbf{1}_u - \mathbf{1}_v )^{\top} L^{+} (\mathbf{1}_u - \mathbf{1}_v) & (\cref{eq:devriendtER}) \\
        & = \big\langle (\cochainLup{0})^+ (\flat_0^{ G}\iota u - \flat_0^{ G} \iota v),\, (\flat_0^{ G}\iota u - \flat_0^{ G}\iota v ) \big\rangle_{C^0} & (\mathbf{1}_u \text{ is the vector representing } \flat_0^{ G}\iota u)\\
        & = \big\langle (\cochainLup{0})^+ \flat_0^{G}\partial_1 \alpha ,\, \flat_0^{ G}\partial_1 \alpha \big\rangle_{C^0} &  (\text{linearity of } \flat_0^{ G})\\ 
        & = \big\langle (\chainLup{0}\partial_1 \alpha ,\, \partial_1 \alpha \big\rangle_{C_0} & \text{(\cref{lem:L pseudoinverse chains cochains})}\\ 
        & = \tilde{r}_{uv} & (\cref{eq:tilde_r_uv_C_0}).
    \end{align*}
    Hence, \( \tilde{r}_{uv} = r_{uv} \), as claimed.
\end{proof}

Inspired by \cref{eq:tilde_r_uv_C_0}, we define the following bilinear form on the space of 0-boundaries of the graph $G$ to be
    \begin{equation}
    \label{eq:bilinear_form_boundaries}
        \tilde{\Rcal}_0: B_0(G) \times B_0(G)  \to \Rspace_{\geq 0}\, ,\quad
        \tilde{\Rcal}_0(\partial_1\chainOne, \partial_1\chainOne') \coloneqq 
        \Rcal_0(\chainOne, \chainOne').
    \end{equation}
    
\begin{remark}
\label[remark]{rmk:well-definedness of caR_0}
    A limitation of the bilinear form \(\tilde{\Rcal}_0\) is the absence of a canonical basis in \(B_0(G)\). This motivates the construction, in \cref{def:ER bilinear form graph}, of a bilinear form defined on the full space \(C_1(G)\), which admits a standard basis indexed by the edges of the graph.
\end{remark}

\subsubsection{Edge-Based Effective Resistance in Graphs as Bilinear Forms}
\label{sec:edge-based-er}

Inspired by \cref{eq:tilde_r_uv_C_1} in \cref{prop:tilde_r_uv}, we introduce the following definition.

\begin{definition}\label[definition]{def:ER bilinear form graph}
    We define the \emph{effective resistance bilinear form} on the space of 1-chains of the graph $G$ to be
    \begin{equation*}
        \begin{aligned}
            \Rcal_1: C_1(G) \times C_1(G) & \to \Rspace\\
              (\chainOne, \chainOne') &\mapsto  \Rcal_1(\chainOne, \chainOne') \coloneqq 
              \langle \partial_1^\adjoint (\chainLup{0})^+  \partial_1 \chainOne,\,   \chainOne' \rangle_{C_1}.
        \end{aligned}
    \end{equation*}
\end{definition}    

We note that the two bilinear forms $\Rcal_1$ and $\tilde \Rcal_0$ are related as follows: for 1-chains \( \chainOne, \chainOne' \), 
\[
    \Rcal_1(\chainOne, \chainOne') = \tilde{\Rcal}_0(\partial_1 \chainOne, \partial_1 \chainOne').
\]
  
The next proposition shows that, under the assumption of unit weights on the vertices and using the standard basis indexed by the edges of the graph, the matrix representation of \( \Rcal_1 \) coincides with the edge-based effective resistance matrix \( R \) from \cref{eq:graph_resistance}. 
This justifies the name \emph{effective resistance bilinear form}, as it extends the classical notion of effective resistance.
The proof is deferred to \cref{prop:matrix of caR_p}, where the proposition below appears as a special case of a more general result. 

\begin{proposition} \label[proposition]{prop:matrix of caR}
    Assume unit weights on the vertices and use the standard basis indexed by the edges of the graph. Then the bilinear form \( \Rcal_1 \) has matrix representation \( B_1^\top (B_1 W_1 B_1^\top)^+ B_1 \). 
\end{proposition}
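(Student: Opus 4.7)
The plan is to compute the matrix entries $[\Rcal_1]_{\Bcal_1,\Bcal_1}(i,j) = \Rcal_1(e_i, e_j)$ directly from the definition, by translating the operator composition $\partial_1^\adjoint (\chainLup{0})^+ \partial_1$ and the inner product $\langle\cdot,\cdot\rangle_{C_1}$ into matrices in the standard bases, then simplifying.

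First, I would use the general fact that if a bilinear form has the shape $\varphi(u,v) = \langle T u, v\rangle_{C_1}$ for some linear map $T\colon C_1(G)\to C_1(G)$, and if $F = [T]^{\Bcal_1}_{\Bcal_1}$ and $M = [\langle\cdot,\cdot\rangle_{C_1}]_{\Bcal_1,\Bcal_1}$, then $[\varphi]_{\Bcal_1,\Bcal_1} = F^\top M$; this is immediate from $\langle Tu,v\rangle_{C_1} = (F[u])^\top M [v]$. Here $T = \partial_1^\adjoint (\chainLup{0})^+ \partial_1$ and $M = W_1^{-1}$ by \cref{sec:matrix-representations}.

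Next, I would assemble $F$ from the matrix representations already established in the paper. Using $W_0 = \mathrm{Id}$, we have $[\partial_1]^{\Bcal_0}_{\Bcal_1} = B_1$, and from \cref{eq:adjoint of boundary matrix representation}, $[\partial_1^\adjoint]^{\Bcal_1}_{\Bcal_0} = W_1 B_1^\top$. The chain up-Laplacian in the standard basis becomes $[\chainLup{0}]^{\Bcal_0}_{\Bcal_0} = B_1 W_1 B_1^\top = L$ by \cref{eq:L_up_chain}, which is symmetric because $W_0 = \mathrm{Id}$. Since the basis $\Bcal_0$ is orthonormal (inner product matrix equal to the identity), the matrix of $(\chainLup{0})^+$ in $\Bcal_0$ is simply $L^+$. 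Composing, $F = W_1 B_1^\top L^+ B_1$.

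Finally, I would compute $F^\top M = (W_1 B_1^\top L^+ B_1)^\top W_1^{-1}$; using symmetry of $L^+$ this equals $B_1^\top L^+ B_1 W_1 W_1^{-1} = B_1^\top (B_1 W_1 B_1^\top)^+ B_1$, matching \cref{eq:graph_resistance}. There is no real obstacle here: the only point requiring care is the bookkeeping between operator pseudoinverses and matrix pseudoinverses, which is harmless precisely because $W_0 = \mathrm{Id}$ makes $\Bcal_0$ orthonormal; in the general $p$-dimensional setting this step is where weights on the lower-dimensional simplices intervene and the argument of \cref{prop:matrix of caR_p} is needed.
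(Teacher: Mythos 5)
Your proof is correct and follows essentially the same route as the paper: the paper simply defers to \cref{prop:matrix of caR_p}, whose proof likewise computes $[\ERoperator_p]_{\caB_p}^{\caB_p}$ from \cref{eq:adjoint of boundary matrix representation,eq:L_up_chain} and \cref{prop:pseudo_inverse_matrix_from_op}, and then applies the identity $[\Rcal_p]_{\caB_p,\caB_p} = \bigl([\ERoperator_p]_{\caB_p}^{\caB_p}\bigr)^\top W_p^{-1}$, which is exactly your $F^\top M$ step. Your observation that $W_0 = \mathrm{Id}$ makes $\Bcal_0$ orthonormal, so the operator pseudoinverse is represented by the matrix pseudoinverse $L^+$ without the conjugation by $W_{p-1}^{\pm 1/2}$, is precisely the simplification that distinguishes this special case from the general weighted argument.
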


\section{Effective Resistance in Simplicial Complexes} 
\label{sec:resistance-simplicial}

Effective resistance, originally defined for edges in graphs, has been extended to higher-dimensional simplices in several works, including~\cite{BlackMaxwell2021,KookLee2018,OstingPalandeWang2020}. These approaches rely on choosing specific bases for the chain or cochain spaces and working with the resulting matrix representations of the (co)boundary and Laplacian operators. 

In \cref{sec:ER simplicial bilinear-form}, we develop a coordinate-free, operator-theoretic perspective on effective resistance by generalizing the graph-theoretic setting of \cref{sec:edge-based-er}, we reinterpret effective resistance in simplicial complexes using bilinear forms and musical isomorphisms.

Finally, in \cref{sec:ER simplicial matrix-form}, we show that our operator-theoretic formulation recovers the matrix-based definitions found in the literature.

\subsection{Effective Resistance in Simplicial Complexes as Bilinear Forms}  \label{sec:ER simplicial bilinear-form}

We generalize \cref{def:ER bilinear form graph} to the setting of simplicial complexes.

\begin{definition}\label[definition]{def:ER bilinear form in SC}
    We define the \emph{effective resistance bilinear form} on the space $C_p(K)$ of $p$-chains as
    \begin{equation*}
        \begin{aligned}
                \Rcal_p: C_p(K) \times C_p(K) & \to \Rspace\\
              (\chainOne, \chainOne') &\mapsto  \Rcal_p(\chainOne, \chainOne') \coloneqq \big\langle \partial^\adjoint_p (\chainLup{p-1})^+ \partial_p \chainOne,   \chainOne' \big\rangle_{C_p}.
        \end{aligned}
    \end{equation*}

    We define the \emph{effective resistance bilinear form} on the space $C^p(K)$ of $p$-cochains as
    \begin{equation*}
        \begin{aligned}
                \Rcal^p: C^p(K) \times C^p(K) & \to \Rspace\\
              (f, f') &\mapsto  \Rcal^p(f, f') \coloneqq \big\langle \delta_{p-1}(\cochainLup{p-1})^+ \delta^{\adjoint}_{p-1} f, f' \big\rangle_{C^p}.
        \end{aligned}
    \end{equation*}
\end{definition}

We note that the effective resistance bilinears forms on chains and cochains are related to each other via the musical isomorphism. Indeed, applying \cref{lem:L pseudoinverse chains cochains} we obtain
\[\Rcal^p(f, f') = \big\langle \flat_{p-1} \partial^\adjoint_p (\chainLup{p-1})^+ \partial_p \flat_{p}^{-1} f, f' \big\rangle_{C^p}
= \Rcal_p\big(\flat_{p}^{-1}f,\, \flat_{p}^{-1}f'\big).\]

\begin{definition}[Effective resistance of chains and cochains]\label[definition]{def:ER_chains_cochains}
Let \( \alpha \in C_p(K) \) be a \( p \)-chain in a simplicial complex \( K \). 
The \emph{effective resistance} of \( \alpha \) is defined as
\[
    r_\alpha \coloneqq \mathcal{R}_p(\alpha, \alpha).
\]

Let \( f \in C^p(K) \) be a \( p \)-cochain. The \emph{effective resistance} of \( f \) is defined as
\[
r_f \coloneqq \mathcal{R}^p(f, f') = \Rcal_p\big(\flat_p^{-1} f, \flat_p^{-1} f'\big). 
\]
where $\flat_p:C_p(K)\to C^p(X)$ is the flat isomorphism (see \cref{def:flat-sharp iso}).
\end{definition}

\begin{remark}\label[remark]{rmk:relative ER}
When \( f = \sigma^\dual \) is the dual cochain corresponding to a \( p \)-simplex \( \sigma \), we have:
\begin{enumerate}
    \item \(
    r_{\sigma^\dual} 
    = \mathcal{R}_p\big(\flat_p^{-1} \sigma^\dual,\, \flat_p^{-1} \sigma^\dual \big) 
    = \mathcal{R}_p(w(\sigma) \sigma, w(\sigma) \sigma) 
    = w(\sigma)^2 r_\sigma,
    \)
    \item \(
    r_{\sqrt{w(\sigma)} \, \sigma} =  w(\sigma) r_\sigma,
    \)
    \item \(
    r_{\tfrac{1}{\sqrt{w(\sigma)}}{\sigma^\dual}} 
       = w(\sigma) r_\sigma.
    \)
\end{enumerate}
In the graph case, letting \( \sigma = e \) be an edge, the quantity \( w(e) r_e \) 
has been referred to as the \emph{relative effective resistance} in \cite{DevriendtLambiotte2022}. 
The above calculations show that the relative effective resistance of an edge \( e \) is equal to the effective resistance of either the chain \( \sqrt{w(e)} e \) or the cochain \( \tfrac{1}{\sqrt{w(e)}} e^\dual \).
\end{remark}

\begin{remark}[Effective resistance depends only on boundaries] 
    \label[remark]{rmk: ER depends only on boundaries}
The effective resistance bilinear form depends only on the boundaries of the chains. Indeed, using
\[
\Rcal_p(\chainOne, \chainOne')
= \big\langle \partial_p^\adjoint (\chainLup{p-1})^+ \partial_p \chainOne,\, \chainOne' \big\rangle_{C_p}
= \big\langle (\chainLup{p-1})^+ \partial_p \chainOne,\, \partial_p \chainOne' \big\rangle_{C_{p-1}},
\]
if $\tilde\chainOne$ and $\tilde\chainOne'$ satisfy $\partial_p \tilde\chainOne = \partial_p \chainOne$ and $\partial_p \tilde\chainOne' = \partial_p \chainOne'$, then $\Rcal_p(\chainOne, \chainOne') = \Rcal_p(\tilde\chainOne, \tilde\chainOne')$.
It follows immediately that for any $\alpha\in C_p(K)$ and $c \in Z_p(K)$, 
\(\Rcal_p(\alpha, c)  = 0.\)

Motivated by the above observation, one may define a bilinear form directly on the space of $p$-boundaries $B_p(K)$:
\begin{equation}\label{eq:bilinear_form_boundaries_p}
    \tilde{\Rcal}_p: B_p(K) \times B_p(K) \to \Rspace_{\ge 0},\quad
    \tilde{\Rcal}_p(\beta, \beta')
    \coloneqq \Rcal_{p+1}(\chainOne, \chainOne'),
\end{equation}
where $\chainOne$ and $\chainOne'$ are any $(p+1)$-chains such that $\partial_{p+1}\chainOne = \beta$ and $\partial_{p+1}\chainOne' = \beta'$.
This generalizes \cref{eq:bilinear_form_boundaries} to arbitrary dimensions. 
Using the bilinear form $\tilde{\Rcal}_p$, one can induce a notion of effective resistance for a $p$-boundary $\beta = \partial_{p+1}\chainOne$ by defining it as the effective resistance of $\chainOne$. This perspective is adopted in \cite{BlackMaxwell2021}, for instance.

In addition, when $p=1$ and $K$ is a connected graph, the vertex-based effective resistance arises from the bilinear form $\tilde{\Rcal}_0$, where the boundaries are given by differences of vertices and the effective resistance is defined using the $1$-chain connecting them. 
\end{remark}

\paragraph{Operators associated to effective resistance bilinear forms.}
To facilitate the discussion of the properties of the effective resistance bilinear forms, we introduce the following linear operators.
For chains, we define the linear operator
\begin{equation}\label{eq:ER operator chains}
    \ERoperator_p \coloneqq \partial_p^\adjoint \big( \chainLup{p-1} \big)^+ \partial_p = \partial_p^\adjoint \big( \partial_p^\adjoint \big)^+.
\end{equation}
By construction, $\Rcal_p(\chainOne, \chainOne') = \langle \ERoperator_p \chainOne, \chainOne' \rangle_{C_p}.$
Similarly, for cochains, define the linear operator
\begin{equation}\label{eq:ER operator cochains}
    \ERoperator^p 
    \coloneqq \delta_{p-1} \big( \cochainLup{p-1} \big)^+ \delta^\adjoint_{p-1} 
    = \delta_{p-1} \delta_{p-1} ^+
    = \flat_{p-1}\ERoperator_p \flat_{p-1}^{-1}.
\end{equation}
Again, it follows immediately that $\Rcal^p(f, f') = \langle \ERoperator^p f, f' \rangle_{C^p}.$

Because we work in finite-dimensional vector spaces, every linear operator is continuous and hence bounded.
Consequently, any bilinear form of the type \(\langle A x, y \rangle\), where \(A\) is a linear operator, is also bounded, since \(\langle A x,y\rangle \leq \|A\| \|x\| \|y\|\).
The Riesz representation theorem thus provides a one-to-one correspondence between bounded bilinear forms and linear operators.
Therefore, the effective resistance bilinear forms $\Rcal_p$ and $\Rcal^p$ correspond uniquely to the operators $\ERoperator_p$ and $\ERoperator^p$, respectively, which we refer to as the \emph{effective resistance (linear) operators}.

\begin{remark}
\cref{eq:ER operator chains} is based on the up Laplacian operator. However, it is also possible to consider alternative definitions of effective resistance operators based on the down Laplace or the Hodge Laplace operator. For instance, one can define the effective resistance operator on chains as
\begin{equation*}
    \partial^\adjoint_p \big(\mathcal{L}_{p-1}^{\textrm{Hodge}}\big)^+ \partial_p
    = \partial^\adjoint_p \big(\mathcal{L}_{p-1}^{\textrm{up}}+\mathcal{L}_{p-1}^{\textrm{down}}\big)^+ \partial_p.
\end{equation*}
For $p=1$, and hence for graphs, the above formula and \cref{eq:ER operator chains} turn out to be the same, but for greater dimensions, they are different. Studying the theoretical properties of these Hodge-Laplace-based definitions is left for future work. 
\end{remark}

By \cref{eq:ER operator chains,eq:ER operator cochains}, the effective resistance operators are of the form of $\varphi \varphi^+$, it follows immediately from \cref{prop:pseudoinverse-properties} that they satisfy the following properties.
    
\begin{proposition} 
\label[proposition]{prop:properties of R operator}
    The $p$-th effective resistance operator $\ERoperator_p=\partial_p^\adjoint \big( \partial_p^\adjoint \big)^+$ on chains is the orthogonal projection of $C_p(K)$ onto $\big(\kernel ( \partial_p^\adjoint )^+\big) ^\perp
    =\big(\kernel \partial_p \big)^\perp
    =\image \partial_p^\adjoint $. This means that $\ERoperator_p$ is self-adjoint, idempotent and has $\image\partial_p^\adjoint$ as its image and $\ker(\partial_p)$ as its kernel.

    Similarly, the $p$-th effective resistance operator $\ERoperator^p = \delta_{p-1} \delta_{p-1} ^+$ on cochains is the orthogonal projection of $C^p(K)$ onto $\kernel(\delta_{p-1}^+)^\perp = \kernel(\delta_{p-1}^*)^\perp= \image(\delta_{p-1})$. This means $\ERoperator^p$ is self-adjoint, idempotent and has $\image(\delta_{p-1})$ as its image and $\ker(\delta_{p-1}^*)$ as its kernel.
\end{proposition}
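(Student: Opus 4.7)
The plan is to recognize that the effective resistance operator has the form $\ERoperator_p = \varphi \varphi^+$ with $\varphi = \partial_p^\adjoint$, and then to invoke the standard fact, included in the referenced \cref{prop:pseudoinverse-properties}, that such a composition is the orthogonal projection of the codomain onto $\image \varphi$. Once this is in place, every assertion in the statement (self-adjointness, idempotence, image, kernel) follows from general linear-algebra identities.

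First, I would justify the equality $\partial_p^\adjoint (\chainLup{p-1})^+ \partial_p = \partial_p^\adjoint (\partial_p^\adjoint)^+$ already written in \cref{eq:ER operator chains}. Setting $\varphi \coloneqq \partial_p^\adjoint \colon C_{p-1}(K) \to C_p(K)$, so that $\varphi^\adjoint = \partial_p$ and $\varphi^\adjoint \varphi = \partial_p \partial_p^\adjoint = \chainLup{p-1}$, the Moore--Penrose identity $\varphi^+ = (\varphi^\adjoint \varphi)^+ \varphi^\adjoint$ yields $\varphi^+ = (\chainLup{p-1})^+ \partial_p$, and hence $\varphi\varphi^+ = \partial_p^\adjoint (\chainLup{p-1})^+ \partial_p = \ERoperator_p$.

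Next, by the projection property recalled above, $\ERoperator_p = \varphi\varphi^+$ is the orthogonal projection of $C_p(K)$ onto $\image \varphi = \image \partial_p^\adjoint$. Any such projection is self-adjoint and idempotent, with kernel equal to $(\image \varphi)^\perp$. The remaining equalities follow from two standard facts for finite-dimensional inner product spaces: applying $(\image B^\adjoint)^\perp = \ker B$ to $B = \partial_p$ yields $(\image \partial_p^\adjoint)^\perp = \ker \partial_p$; and the kernel of a pseudoinverse $(\partial_p^\adjoint)^+$ equals $(\image \partial_p^\adjoint)^\perp$, a property also contained in \cref{prop:pseudoinverse-properties}. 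Stringing these together gives the chain of equalities
\[
\bigl(\ker (\partial_p^\adjoint)^+\bigr)^\perp = (\ker \partial_p)^\perp = \image \partial_p^\adjoint,
\]
as claimed, and identifies $\image \ERoperator_p = \image \partial_p^\adjoint$ and $\ker \ERoperator_p = \ker \partial_p$.

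The cochain statement is proved by the identical argument, replacing $\partial_p^\adjoint$ by $\delta_{p-1}$: since $\delta_{p-1}^\adjoint \delta_{p-1} = \cochainLup{p-1}$, the same Moore--Penrose identity gives $\delta_{p-1}^+ = (\cochainLup{p-1})^+ \delta_{p-1}^\adjoint$, so $\ERoperator^p = \delta_{p-1}\delta_{p-1}^+$ is the orthogonal projection of $C^p(K)$ onto $\image \delta_{p-1}$, with kernel $(\image \delta_{p-1})^\perp = \ker \delta_{p-1}^\adjoint$. I do not expect a substantive obstacle; the only point to verify is that the particular form of the Moore--Penrose identity $\varphi^+ = (\varphi^\adjoint \varphi)^+ \varphi^\adjoint$ and the characterization $\ker \varphi^+ = (\image \varphi)^\perp$ are both available from (or immediate consequences of) \cref{prop:pseudoinverse-properties}, which the paper is already committed to citing.
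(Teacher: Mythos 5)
Your proposal is correct and follows essentially the same route as the paper, which states the proposition without a separate proof on the grounds that the operators have the form $\varphi\varphi^+$ and the claims follow immediately from \cref{prop:pseudoinverse-properties} --- exactly the reduction you carry out, including the identification $\chainLup{p-1} = \partial_p\partial_p^\adjoint$ and $\cochainLup{p-1} = \delta_{p-1}^\adjoint\delta_{p-1}$ used to justify \cref{eq:ER operator chains,eq:ER operator cochains}. The only cosmetic point is that the identity $\varphi^+ = (\varphi^\adjoint\varphi)^+\varphi^\adjoint$ lives in the paper as \cref{eq:pseudoinverse_formula} in \cref{rem:self-adj} rather than in \cref{prop:pseudoinverse-properties} itself, so your citation should point there instead.
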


Using the properties of the effective resistance operators from \cref{prop:properties of R operator}, we can now establish important properties of the effective resistance bilinear forms.

\begin{corollary}
\label[corollary]{cor:ER postive semi-definite}
    The effective resistance bilinear forms $\Rcal_p$ and $\Rcal^p$ are both symmetric and positive semi-definite. Moreover, for any $p$-chain $\alpha\in C_p(K)$ and $p$-cochain $f\in C^p(K)$, 
    \begin{equation}\label{eq:ER in norm form}
        r_\alpha 
        = \|\ERoperator_p (\alpha)\|_{C_p}^2
        = \|\partial_p^\adjoint \big( \partial_p^\adjoint \big)^+ (\alpha)\|_{C_p}^2
        \qquad
        \text{ and }
        \qquad 
        r_f = \|\ERoperator^p(f)\|_{C^p}^2 = \left\| \delta_{p-1} \delta_{p-1}^+(f) \right\|_{C^p}^2.
    \end{equation} 
\end{corollary}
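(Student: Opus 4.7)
The plan is to derive this corollary as a direct consequence of \cref{prop:properties of R operator}, which already supplies the two structural facts we need: that $\ERoperator_p$ and $\ERoperator^p$ are self-adjoint and idempotent (being orthogonal projections). I would handle the chain case in detail and then note that the cochain case follows by the same argument (or via the musical isomorphism, since $\ERoperator^p = \flat_{p-1} \ERoperator_p \flat_{p-1}^{-1}$ and $\flat_{p-1}$ is orthogonal by \cref{lem:music}).

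First, for symmetry, I would compute
\[
\Rcal_p(\alpha, \alpha') = \langle \ERoperator_p \alpha, \alpha' \rangle_{C_p} = \langle \alpha, \ERoperator_p \alpha' \rangle_{C_p} = \langle \ERoperator_p \alpha', \alpha \rangle_{C_p} = \Rcal_p(\alpha', \alpha),
\]
where the first and last equalities are by definition of $\Rcal_p$, the middle one uses self-adjointness of $\ERoperator_p$, and the penultimate uses symmetry of the inner product (here over $\Rspace$).

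Next, for positive semi-definiteness together with the norm identity, the key step is to combine self-adjointness with idempotence to write $\ERoperator_p = \ERoperator_p^2 = \ERoperator_p^\adjoint \ERoperator_p$, so that
\[
r_\alpha = \Rcal_p(\alpha, \alpha) = \langle \ERoperator_p \alpha, \alpha \rangle_{C_p} = \langle \ERoperator_p^\adjoint \ERoperator_p \alpha, \alpha \rangle_{C_p} = \langle \ERoperator_p \alpha, \ERoperator_p \alpha \rangle_{C_p} = \|\ERoperator_p \alpha\|_{C_p}^2 \geq 0.
\]
Substituting $\ERoperator_p = \partial_p^\adjoint (\partial_p^\adjoint)^+$ from \cref{eq:ER operator chains} then yields the stated form of \cref{eq:ER in norm form}.

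The cochain statement is identical in structure: $\ERoperator^p = \delta_{p-1} \delta_{p-1}^+$ is self-adjoint and idempotent by \cref{prop:properties of R operator}, so the same two-line manipulation gives $r_f = \|\ERoperator^p f\|_{C^p}^2 = \|\delta_{p-1}\delta_{p-1}^+ f\|_{C^p}^2$ and symmetry of $\Rcal^p$. There is no real obstacle here — the work has been front-loaded into \cref{prop:properties of R operator}, and the only thing worth highlighting explicitly is the elementary fact that for any self-adjoint idempotent $P$ on an inner product space, $\langle Px, x\rangle = \|Px\|^2$, which is what simultaneously delivers both symmetry, nonnegativity, and the norm-squared representation.
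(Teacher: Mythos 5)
Your proof is correct and follows essentially the same route as the paper: both derive symmetry from the self-adjointness of $\ERoperator_p$ and obtain positive semi-definiteness together with the norm identity by writing $\ERoperator_p = (\ERoperator_p)^2$ and moving one factor across the inner product, all resting on \cref{prop:properties of R operator}. Your slightly more explicit packaging (the observation that $\langle Px, x\rangle = \|Px\|^2$ for any self-adjoint idempotent $P$, and the remark that the cochain case also follows via \cref{lem:music}) is a fine addition but not a different argument.
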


\begin{proof}
    Symmetry follows directly from the self-adjointness of the effective resistance operators.
    Positive semidefiniteness follows from their self-adjointness and idempotence. Indeed, for any $p$-chain $\alpha\in C_p(K)$, we have
    \[
        r_\alpha = \big\langle \ERoperator_p (\alpha), \alpha \big\rangle_{C_p} 
    = \big\langle (\ERoperator_p)^2 (\alpha), \alpha \big\rangle_{C_p} 
    = \langle \ERoperator_p (\alpha), \ERoperator_p (\alpha) \rangle_{C_p} 
    = \|\ERoperator_p (\alpha)\|_{C_p}^2
    = \|\partial_p^\adjoint \big( \partial_p^\adjoint \big)^+ (\alpha)\|_{C_p}^2 \geq 0.
    \]
    A similar argument applies to the cochain case.
\end{proof}

Due to the analogous structure of the effective resistance operators, and their associated bilinear forms on both chains and cochains, all results apply equally in either setting. Since we state most results in terms of chains, we adopt the chain-based formulation for the remainder of the paper.

\subsection{Effective Resistance in Simplicial Complexes as Matrices} 
\label{sec:ER simplicial matrix-form}

In this section, we review several notions of effective resistance of simplices from \cite{OstingPalandeWang2020,KookLee2018, BlackMaxwell2021} and show how they can be recovered from our notion of effective resistance bilinear form by selecting appropriate bases for the chain (or cochain) spaces.

\subsubsection{Connection to the Definition of Osting, Palande, and Wang} 
\label{sec:connection_Osting}

We review the notion of effective resistance of simplices introduced by Osting et al.~\cite{OstingPalandeWang2020} and recover their notion using our framework in \cref{prop:matrix of caR_p}. 
Following~\cref{sec:laplacian-matrix},     \([\cochainLup{p}]_{\caB^p}^{\caB^p}\) is not symmetric according to~\cref{eq:L_up}, despite originating from a self-adjoint operator, because the inner product in this space does not have the identity matrix as its representation. 
Osting et al.~\cite{OstingPalandeWang2020} used a standard basis but avoided working with $    [\cochainLup{p}]_{\caB^p}^{\caB^p}$ because of this lack of symmetry. Instead, they considered the following symmetric, positive semi-definite normalized Laplacian, 
\begin{align}
\label{eqn:relative-er}
    \Lupmatrix{p-1}{sym} \coloneqq W_{p-1}\Lupmatrix{p-1}{} = B_p  W_{p} B_p^\top.   
\end{align}

Based on the normalized Laplacian $\Lupmatrix{p-1}{sym}$, they defined the matrix $R_{p} \in \Rspace^{n_p \times n_p}$, 
\begin{align}\label{eq:R_p_matrix_original}
    R_{p} \coloneqq B_p^\top (\Lupmatrix{p-1}{sym})^{+} B_p  = B_p^\top \left(
    B_p  W_{p} B_p^\top \right)^{+
    } B_p .  
\end{align}
The effective resistance of a $p$-simplex $\sigma$ is then defined as the diagonal entry of $R_{p}$ corresponding to $\sigma$.

\cref{eq:R_p_matrix_original} generalizes the edge-based effective resistance matrix $R$ in the graph case (see \cref{eq:graph_resistance}).  
Indeed, for graphs (i.e., setting \( p = 0 \)) with unit weights on vertices, we have:
\[
R_{1} = B_1^\top \left(
B_1  W_{1} B_1^\top \right)^{+
} B_1 = R.
\]

\begin{remark}
    The matrix defined in \cref{eqn:relative-er} is indeed the matrix representation of the up Laplacian operator with respect to the standard basis $\Bcal^{p-1}$ of $C^{p-1}(K)$ and the normalized basis $\hat\Bcal^{p-1}\coloneqq ({w^{-1}_{\sigma_1}} \sigma_1^\dual , \dots , {w^{-1}_{\sigma_{n_{p-1}}}} \sigma_{n_{p-1}}^\dual)$ of $C^p(K)$. Indeed, by $[\delta_{p-1}]_{\Bcal^p}^{\Bcal^{p-1}}=B_p^\top$ and \cref{eq:adjoint of coboundary matrix representation}, 
    \begin{align*}
        [\delta^{\adjoint}_{p-1} \delta_{p-1}]_{\Bcal^{p-1}}^{\hat\Bcal^p}= & [\delta^{\adjoint}_{p-1} ]_{\Bcal^{p-1}}^{\hat\Bcal^p} \,[\delta_{p-1}]_{\Bcal^p}^{\Bcal^{p-1}} \, = \,[\mathrm{Id}]_{\Bcal^{p-1}}^{\hat\Bcal^p}\, 
        [\delta^{\adjoint}_{p-1} ]_{\Bcal^{p-1}}^{\Bcal^p}\,
        [\delta_{p-1}]_{\Bcal^p}^{\Bcal^{p-1}}\\
        = & W_{p-1} W_{p-1}^{-1}B_p  W_{p}B_p  \, = \,B_p  W_{p}B_p^\top=\Lupmatrix{p-1}{sym}.
    \end{align*}
\end{remark}

Next, we show that the matrix representation of the effective resistance bilinear form \( \Rcal_p \) with respect to the standard bases and unit weights on \((p-1)\)-simplices agrees with the matrix definition in \cref{eq:R_p_matrix_original}. This is achieved by generalizing \cref{prop:matrix of caR} in a straightforward way.

\begin{proposition} \label[proposition]{prop:matrix of caR_p}
Let \(W_{p-1}\) and \(W_p\) be the weight matrices on the \((p-1)\)- and \(p\)-simplices, respectively.
With respect to the standard bases, the effective-resistance bilinear form \(\Rcal_p\) has a matrix representation
\begin{equation}
\label{eq:caRp-direct}
[\Rcal_p]_{\caB_p,\caB_p}
= B_p^\top W_{p-1}^{-1/2} \left( W_{p-1}^{-1/2} B_p W_p B_p^\top W_{p-1}^{-1/2}\right)^+ W_{p-1}^{-1/2} B_p,
\end{equation}
which reduces to \(B_p^\top (B_p W_p B_p^\top)^{+} B_p\) when the \((p-1)\)-simplices have unit weights (\(W_{p-1}=\mathrm{Id}\)).
\end{proposition}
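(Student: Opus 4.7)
The plan is to compute $[\Rcal_p]_{\Bcal_p, \Bcal_p}$ directly by unfolding the operator-theoretic definition. Since $\Rcal_p(\alpha, \alpha') = \langle \ERoperator_p \alpha, \alpha'\rangle_{C_p}$ with $\ERoperator_p = \partial_p^\adjoint (\chainLup{p-1})^+ \partial_p$ (see \cref{eq:ER operator chains}), and since the Gram matrix of $\langle \cdot,\cdot\rangle_{C_p}$ in the standard basis is $W_p^{-1}$, expanding in coordinates gives
\[
    \Rcal_p(\alpha, \alpha')
    = [\ERoperator_p \alpha]^\top W_p^{-1} [\alpha']
    = [\alpha]^\top \bigl([\ERoperator_p]_{\Bcal_p}^{\Bcal_p}\bigr)^{\!\top} W_p^{-1} [\alpha'],
\]
so $[\Rcal_p]_{\Bcal_p,\Bcal_p} = \bigl([\ERoperator_p]_{\Bcal_p}^{\Bcal_p}\bigr)^{\!\top} W_p^{-1}$. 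The whole proof thus reduces to writing $[\ERoperator_p]_{\Bcal_p}^{\Bcal_p}$ as a triple product and simplifying.

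For the outer factors I would quote \cref{eq:adjoint of boundary matrix representation}: $[\partial_p]_{\Bcal_p}^{\Bcal_{p-1}} = B_p$ and $[\partial_p^\adjoint]_{\Bcal_{p-1}}^{\Bcal_p} = W_p B_p^\top W_{p-1}^{-1}$. The substantive step is representing the middle factor $(\chainLup{p-1})^+$ in the standard basis, and this is where the main obstacle lies: the matrix pseudoinverse does not in general coincide with the matrix of the operator pseudoinverse unless the chosen basis is orthonormal. I would bypass this by passing to the orthonormal basis $\widetilde{\Bcal}_{p-1}$, in which, by \cref{eq:L_up_chain_orthonormal}, $[\chainLup{p-1}]_{\widetilde{\Bcal}_{p-1}}^{\widetilde{\Bcal}_{p-1}} = W_{p-1}^{-1/2} B_p W_p B_p^\top W_{p-1}^{-1/2}$ is symmetric; by the compatibility of pseudoinverses with inner-product-preserving isomorphisms (the same principle used in \cref{lem:L pseudoinverse chains cochains}), the matrix pseudoinverse then agrees with the matrix of $(\chainLup{p-1})^+$ in that basis.

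Converting back via $[\mathrm{Id}]^{\Bcal_{p-1}}_{\widetilde{\Bcal}_{p-1}} = W_{p-1}^{1/2}$ and its inverse yields
\[
    [(\chainLup{p-1})^+]_{\Bcal_{p-1}}^{\Bcal_{p-1}}
    = W_{p-1}^{1/2}\,\bigl(W_{p-1}^{-1/2} B_p W_p B_p^\top W_{p-1}^{-1/2}\bigr)^{+}\,W_{p-1}^{-1/2}.
\]
Composing this with the outer factors, the leftmost $W_p$ moves outside the triple product and the $W_{p-1}^{-1}\cdot W_{p-1}^{1/2}$ factors collapse to $W_{p-1}^{-1/2}$. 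Taking the transpose (the inner symmetric matrix has a symmetric pseudoinverse, so only the outer $B_p^\top$ and $B_p$ swap sides and the factor $W_p$ floats to the right) and finally multiplying by $W_p^{-1}$ telescopes $W_p\cdot W_p^{-1} = \mathrm{Id}$, producing the stated formula
\[
    [\Rcal_p]_{\Bcal_p, \Bcal_p}
    = B_p^\top W_{p-1}^{-1/2}\,\bigl(W_{p-1}^{-1/2} B_p W_p B_p^\top W_{p-1}^{-1/2}\bigr)^{+}\,W_{p-1}^{-1/2} B_p.
\]
The reduction to $B_p^\top(B_p W_p B_p^\top)^{+}B_p$ when $W_{p-1} = \mathrm{Id}$ follows by direct substitution, since all $W_{p-1}^{\pm 1/2}$ factors become the identity. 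No further input is needed beyond the matrix identities recorded in \cref{sec:matrix-representations} and the orthonormal-basis characterization of the pseudoinverse.
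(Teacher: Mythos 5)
Your proposal is correct and matches the paper's own proof in all essentials: both compute $[\ERoperator_p]_{\caB_p}^{\caB_p}$ as the triple product $W_p B_p^\top W_{p-1}^{-1}\cdot[(\chainLup{p-1})^+]_{\caB_{p-1}}^{\caB_{p-1}}\cdot B_p$ using \cref{eq:adjoint of boundary matrix representation,eq:L_up_chain}, obtain exactly your expression $W_{p-1}^{1/2}\bigl(W_{p-1}^{-1/2}B_pW_pB_p^\top W_{p-1}^{-1/2}\bigr)^+W_{p-1}^{-1/2}$ for the middle factor, and recover the bilinear form via $[\Rcal_p]_{\caB_p,\caB_p}=\bigl([\ERoperator_p]_{\caB_p}^{\caB_p}\bigr)^\top W_p^{-1}$. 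The only cosmetic difference is that the paper cites \cref{prop:pseudo_inverse_matrix_from_op} for the pseudoinverse in a non-orthonormal basis, whereas you re-derive that lemma in place by passing to $\widetilde{\Bcal}_{p-1}$ and changing back --- which is precisely how \cref{prop:pseudo_inverse_matrix_from_op} is proved in \cref{sec:pseudoinverse}, so the mathematical content is identical.
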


\begin{proof}
Using the standard bases \( \caB_{p-1} \) and \( \caB_p \) (see \cref{def:standard basis}), and applying \cref{
eq:adjoint of boundary matrix representation,eq:L_up_chain} and \cref{prop:pseudo_inverse_matrix_from_op},
the effective-resistance operator
\(\ERoperator_p = \partial_p^\adjoint (\chainLup{p-1})^+ \partial_p\)
has matrix representation
    \begin{align*}
        [\ERoperator_p]_{\caB_{p}}^{\caB_p} & = \big[\partial_p^\adjoint \big]_{\caB_{p-1}}^{\caB_p}\, 
        \big[(\chainLup{p-1})^+\big]_{\caB_{p-1}}^{\caB_{p-1}}\,[\partial_p ]_{\caB_p}^{\caB_{p-1}} \\
        &= W_p B_p^\top W_{p-1}^{-1} \, W_{p-1}^{1/2}\left( W_{p-1}^{-1/2} \left(B_p W_p B_p^\top W_{p-1}^{-1}\right) W_{p-1}^{1/2}\right)^+ W_{p-1}^{-1/2} B_p
    \end{align*}
Since
\([\Rcal_p]_{\caB_p,\caB_p}
=  \big([\ERoperator_p]_{\caB_p}^{\caB_p} \big)^\top \, [\langle\cdot,\cdot\rangle_{C_p}]_{\caB_p,\caB_p}
= \big([\ERoperator_p]_{\caB_p}^{\caB_p} \big)^\top \,  W_p^{-1},\)
the desired result follows after simplification.
\end{proof}

\begin{remark}
With respect to the orthonormal bases \(\tilde\caB_{p-1}\) and \(\tilde\caB_p\),
the Gram matrix $[\langle\cdot,\cdot\rangle_{C_p}]_{\tilde \caB_p, \tilde \caB_p}$ is the identity, so the matrix representation of the effective resistance bilinear form $\Rcal_p$ coincides with that of the effective resistance operator $\ERoperator_p$. 
In particular, we have
\[
  [\Rcal_p]_{\tilde \caB_p, \tilde \caB_p}
  = \big( [\ERoperator_p]_{\tilde \caB_{p}}^{\tilde\caB_p}  \big)^\top 
  = [\ERoperator_p]_{\tilde \caB_{p}}^{\tilde\caB_p}
  = W_p^{-1/2}  [\ERoperator_p]_{\caB_{p}}^{\caB_p}  W_{p}^{1/2}
    = W_p^{1/2} [\Rcal_p]_{\caB_p, \caB_p} W_{p}^{1/2},
\]
which is equal to $W_p^{1/2} B_p^\top \left(  B_p W_p B_p^\top\right)^+ B_p W_{p}^{1/2}$ when the \((p-1)\)-simplices have unit weights.
\end{remark}

\subsubsection{Connection to the Definition of Kook and Lee}
\label{sec:er_by_kook_lee}

We review the notion of effective resistance introduced by Kook and Lee~\cite{KookLee2018} for simplicial networks \((X, W_d)\), i.e.~\(d\)-dimensional simplicial complexes \(X\) with weights only on the \(d\)-simplices, gathered in the diagonal matrix \(W_d\). They define a notion of effective resistance for $d$-simplices (see \cref{eq:er-kook-lee}) on these objects. In \cref{prop:equivalence-kook-and-lee}, we show that their definition is recovered from our effective resistance bilinear form. Furthermore, in \cref{prop:R_d=R'_d}, we establish that their matrix expression \cref{eq:R'_p} is a special case of that of \cite{OstingPalandeWang2020}, which represents our bilinear form under suitable bases, as already shown in \cref{sec:connection_Osting}.

We first review the following notion, key to the framework by Kook and Lee.

\begin{definition}[\textit{Current generator}, \cite{KookLee2018}]
\label[definition]{def:current-generator}
    Let $X$ be a simplicial complex of dimension $d$ with $n$ vertices. A \emph{current generator} is a subset $\sigma \subset\{1, \dots, n\}$ with $|\sigma| = d+1$ such that
    \[ \partial_d \sigma = - \partial_d^X (c)\]
    for some $c \in C_d(X, \Rspace)$. In the formula above, $\partial_d$ refers to the boundary operator defined on $d$-simplices in an abstract simplicial complex, and $\partial_d^X : C_d(X) \to C_{d-1}(X)$ is that of $X$.
\end{definition}

We recall the necessary elements to define the effective resistance of a current generator \(\sigma\) in a simplicial network \((X,W_d)\) according to \cite{KookLee2018}, adapted to our formalism and language. 
We first extend the chain complex of \(X\) by summing a copy of \(H_d(X) = \ker \partial_d^X\) in degree \(d+1\) and  \(\mathrm{span}(\sigma)\) in degree \(d\). 
This yields the complex 
\begin{equation}
\label{eq:Z-complex}
    Z:\quad H_d(X) \xrightarrow{\partial_{d+1}^Z} 
    C_d(X \sqcup \{\sigma\}) \cong C_d(X) \oplus \mathrm{span}(\sigma)
    \xrightarrow{\partial_d^Z} C_{d-1}(X) \to \cdots,
\end{equation}
where $\image \partial_{d+1}^Z = \ker \partial_d^X \subset C_d(X) \subset  C_d(X \sqcup \{\sigma\})$, $\partial_d^Z|_{C_d(X)} = \partial_d^X$ and $\partial_d^Z(\sigma) = - \partial_d^X(c)$, $c$ being the chain in the definition of the current generator (\cref{def:current-generator}). 
We endow \(H_d(X) = \ker \partial^X_d\) with the inner product induced from \(C_d(X)\). 
For \(C_d(X \sqcup \{\sigma\})\), we extend the inner product on \(C_d(X)\) by declaring the new basis element \(\sigma\) to be orthogonal to the standard basis of $C_d(X)$ and have weight one. In other words, for \(\alpha, \beta \in C_d(X \sqcup \{\sigma\})\) written as \(\alpha = \alpha_X + \alpha_\sigma \sigma\) and \(\beta = \beta_X + \beta_\sigma \sigma\) with \(\alpha_X, \beta_X \in C_d(X)\) and \(\alpha_\sigma, \beta_\sigma \in \Rspace\), we set 
\[\langle \alpha, \beta \rangle_{C_d(X \sqcup \{\sigma\})}
= \langle \alpha_X, \beta_X \rangle_{C_d(X)} + \alpha_\sigma\, \beta_\sigma\, ,
\qquad
\langle \sigma, \sigma \rangle_{C_d(X \sqcup \{\sigma\})} = 1.\]
The \(d\)th Hodge Laplacian of the complex in \cref{eq:Z-complex} is then
\[\Lcal^Z_d \coloneqq \Lcal^{\mathrm{Hodge}}_d(Z)
= (\partial_d^Z)^\adjoint \partial_d^Z + \partial_{d+1}^Z (\partial_{d+1}^Z)^\adjoint,\]
where adjoints are taken with respect to these inner products.

Notice that now, the homology of \(Z\) at level $d$ is 1-dimensional:
\[H_d(Z) = \dfrac{\ker \partial_d^Z}{\image \partial_{d+1}^Z} = \dfrac{\ker \partial_d^X \oplus \mathrm{span}\, (\sigma + c)}{\ker \partial_d^X} \cong \mathrm{span}\, (\sigma +c). \]
Hence, the kernel of the $d$th Hodge Laplacian of $Z$ is 1-dimensional.

The definition of the effective resistance by Kook and Lee \cite{KookLee2018} requires the three following steps.
\begin{enumerate}[leftmargin=*]
    \item \textbf{Current construction:} 
    First, fixing a value $i_\sigma \in \Rspace\smallsetminus \{0\}$, Kook and Lee~\cite{KookLee2018} defined the chain $I_\sigma\in C_d(X \sqcup \{\sigma\})$ as the unique vector in the kernel of the $d$th Hodge Laplacian of $Z$ whose projection onto $\sigma$ is precisely $i_\sigma$, i.e., 
    \begin{equation}
        \label{eq:current-kook-lee}
        I_\sigma \in \ker(\Lcal^Z_d) \cong \ker\left(\partial_d^Z\right) \cap \ker\left((\partial_{d+1}^Z)^\adjoint\right), \text{ s.t. }I_\sigma \coloneqq I + i_\sigma \sigma,
    \end{equation}
    with $I \in C_d(X)$, also uniquely defined since $\dim \ker(\Lcal^Z_d)=1$. 
    
    Both $I_\sigma$ and $I$ satisfy Kirchhoff's current law for their respective complexes. For $I_\sigma$, this follows from it being a cycle, $\partial_d^Z I_\sigma = 0$. Similarly, for $I$ we have that
    $0 = \partial^Z_d I_\sigma = \partial_d^X I + i_\sigma \partial_d^Z(\sigma)$, and hence $\partial_d^X I = - i_\sigma \partial_d^Z(\sigma) = i_\sigma \partial_d^X(c)$ with $c\in C_d(X)$, so that $\beta = i_\sigma \partial_d^X(c)\in C_{d-1}(X) $ is the boundary current in our terminology, where $i_\sigma$ amp\`{e}res are being injected/extracted in each simplex of the boundary. This explains the intuition behind the value $i_\sigma$ set at the beginning.
    
    We note that 
    \[\ker \left((\partial_{d+1}^Z)^\adjoint \right) \cong \image(\partial^Z_{d+1})^{\perp} =  \ker(\partial_{d}^X)^\perp \subset C_d(X),\] 
    which ensures that $I \in \ker(\partial_d^X)^\perp$.
    
    \item \textbf{Voltage construction:} 
    The authors then defined a cochain $V_\sigma \in C^d(X \sqcup \{\sigma\})$ by applying Ohm’s law to the current \(I\) constructed on \(X\), which in  our language means setting $V=\flat_d^X I$, and defining
    \begin{equation}
        \label{eq:voltage-kook-lee}
        V_\sigma \coloneqq V + v_\sigma \sigma^\dual, \text{ s.t. } V_\sigma (I_\sigma) = 0,
    \end{equation}
    or, in other words, setting $v_\sigma$ to be the value that satisfies
    \begin{equation}
    \label{eq:equation_vsigma_isigma}
     V_\sigma (I_\sigma) = \langle I, I\rangle_{C_d(X)} + i_\sigma v_\sigma = 0.
    \end{equation}
    Note that, from \cref{eq:equation_vsigma_isigma} we deduce that $i_\sigma v_\sigma < 0$, i.e., they must have different signs.

    In this setting, \(V\) satisfies Ohm’s law in \(X\) by definition with respect to \(I\), and \(I\) has boundary current \(\beta = i_\sigma \partial_d^X(c)\). By \cref{coro:voltage}, Kirchhoff’s voltage law holds for \(V\) in \(X\); that is, there exists \(\phi \in C_{d-1}(X)\) such that \(\delta_{d-1}^X \phi = V\). Hence, \(V\) is indeed a voltage according to our definition.

    The case of \(V_\sigma\) is more subtle. Proving that $V_\sigma$ and $I_\sigma$ are related through Ohm's law would entail
    \[\flat_d^{Z} I_\sigma 
    = \langle I + i_\sigma \sigma, \cdot \rangle_{C_d(X \sqcup \{\sigma\})} 
    = \flat_d^X I + i_\sigma \langle \sigma, \cdot \rangle_{C_d(X \sqcup \{\sigma\})},\]
    being equal to
    \[V_\sigma = \flat_d^X I + v_\sigma \sigma^\dual,
    \quad \text{where} \quad \sigma^\dual = \frac{\langle \sigma, \cdot \rangle_{C_d(X \sqcup \{\sigma\})}}{\langle \sigma, \sigma \rangle_{C_d(X \sqcup \{\sigma\})}}.\]
    However, with our definition of the inner product on \(C_d(X \sqcup \{\sigma\})\), this equality fails in general since \(i_\sigma \neq v_\sigma\). Modifying the inner product to mirror the weighting in \(X\), i.e. setting \(\langle \sigma, \sigma \rangle_{C_d(X \sqcup \{\sigma\})} = w(\sigma)^{-1}\) with $\sigma$ having weight $w(\sigma)$ in the complex $X\sqcup \{\sigma\}$, would impose 
    \[w(\sigma) = \frac{i_\sigma}{v_\sigma}.\]
    This is impossible because \(i_\sigma v_\sigma < 0\), implying \(w(\sigma) < 0\), whereas the inner product requires \(\langle \sigma, \sigma \rangle_{C_d(X \sqcup \{\sigma\})} = w(\sigma)^{-1} \geq 0\). Thus, Ohm’s law cannot hold for \(V_\sigma\). Nevertheless, we will show in \cref{lemma:potential_V_sigma} that \(V_\sigma\) satisfies Kirchhoff’s voltage law independently of Ohm's law, so two of the three electric laws remain valid for the pair $I_\sigma$ and $V_\sigma$.

\item \textbf{Effective resistance definition:} With these definitions, Kook and Lee defined the effective resistance of the current generator $\sigma$ \cite[Definition 3.1]{KookLee2018} precisely as the negative of its weight
    \begin{equation}
        \label{eq:er-kook-lee}
        r'_\sigma \coloneqq -\dfrac{v_\sigma}{i_\sigma}.
    \end{equation}
\end{enumerate}

We prove the following two lemmas establishing some properties of the voltage obtained for $V$ and $V_\sigma$ from Kirchhoff's voltage law. Similar properties are already stated with brief proofs in \cite[Section 4.1]{KookLee2018}, here we provide the statements and proofs adapted to our setting.

\begin{lemma}
\label[lemma]{lemma:potential_V_sigma}
    A potential $\phi \in C^{d-1}(X)$ for $V$, i.e.~$V= \delta_{d-1}^X \phi$, is also a potential for $V_\sigma$, that is 
    \[\delta_{d-1}^Z \phi = V_\sigma,\]
    with $\delta_{d-1}^Z$ the dual map of $\partial_{d}^Z$ in \cref{eq:Z-complex}.
\end{lemma}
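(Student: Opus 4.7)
The plan is to compute $\delta_{d-1}^Z\phi$ directly as a cochain on $C_d(X\sqcup\{\sigma\})\cong C_d(X)\oplus\mathrm{span}(\sigma)$, by evaluating it separately on $C_d(X)$ and on $\sigma$, and then match the result with $V_\sigma = V + v_\sigma\sigma^\dual$.

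First I would observe that in degree $d-1$ the complex $Z$ agrees with $X$, so $C^{d-1}(Z) = C^{d-1}(X)$ and $\phi$ is a well-defined element there. For any $\alpha = \alpha_X + \alpha_\sigma \sigma \in C_d(X\sqcup\{\sigma\})$, dualizing $\partial_d^Z$ gives
\[
(\delta_{d-1}^Z\phi)(\alpha) \;=\; \phi(\partial_d^Z\alpha) \;=\; \phi(\partial_d^X\alpha_X) \;-\; \alpha_\sigma\,\phi(\partial_d^X c) \;=\; (\delta_{d-1}^X\phi)(\alpha_X) \;-\; \alpha_\sigma\, V(c),
\]
using $\partial_d^Z|_{C_d(X)} = \partial_d^X$, $\partial_d^Z(\sigma) = -\partial_d^X(c)$, and $V = \delta_{d-1}^X\phi$. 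Hence $\delta_{d-1}^Z\phi$ restricts to $V$ on $C_d(X)$ and takes value $-V(c)$ on $\sigma$. Comparing with $V_\sigma = V + v_\sigma\sigma^\dual$, it suffices to prove the identity $-V(c) = v_\sigma$, equivalently $\langle I, c\rangle_{C_d(X)} = -v_\sigma$, since $V(c) = (\flat_d^X I)(c) = \langle I, c\rangle_{C_d(X)}$.

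The key step, which I expect to be the only non-routine point, is establishing $i_\sigma \langle I, c\rangle_{C_d(X)} = \langle I, I\rangle_{C_d(X)}$. This follows from an orthogonality argument: because $I_\sigma = I + i_\sigma\sigma$ lies in $\ker\partial_d^Z$, we have $\partial_d^X I = -i_\sigma\partial_d^Z(\sigma) = i_\sigma\partial_d^X(c)$, so $I - i_\sigma c \in \ker\partial_d^X$. On the other hand, the construction of $I$ in \cref{eq:current-kook-lee} forces $I\in\ker((\partial_{d+1}^Z)^\adjoint) \cong (\ker\partial_d^X)^\perp$, so $\langle I,\, I - i_\sigma c\rangle_{C_d(X)} = 0$, which gives $\langle I, I\rangle_{C_d(X)} = i_\sigma\langle I, c\rangle_{C_d(X)}$.

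Combining this with the defining relation $\langle I, I\rangle_{C_d(X)} + i_\sigma v_\sigma = 0$ from \cref{eq:equation_vsigma_isigma} yields $\langle I, c\rangle_{C_d(X)} = -v_\sigma$, hence $-V(c) = v_\sigma$. Plugging this back into the expression for $\delta_{d-1}^Z\phi$ shows
\[
(\delta_{d-1}^Z\phi)(\alpha_X + \alpha_\sigma\sigma) \;=\; V(\alpha_X) + \alpha_\sigma v_\sigma \;=\; V_\sigma(\alpha_X + \alpha_\sigma\sigma),
\]
completing the proof.
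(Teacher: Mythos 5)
Your proof is correct, but it takes a genuinely different route from the paper's at the decisive step. The paper's proof never computes the value of $\delta_{d-1}^Z\phi$ on $\sigma$: it simply checks the two conditions that uniquely characterize $V_\sigma$ in \cref{eq:voltage-kook-lee} --- the restriction to $C_d(X)$ agrees with $V$ (your first computation), and $\delta_{d-1}^Z\phi(I_\sigma) = \phi(\partial_d^Z I_\sigma) = 0$ because $I_\sigma \in \ker\partial_d^Z$. Since $\sigma^\dual(I_\sigma) = i_\sigma \neq 0$, these two conditions pin down the coefficient of $\sigma^\dual$ uniquely, so the conclusion follows in two lines using only the cycle condition on $I_\sigma$. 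You instead evaluate the $\sigma$-component explicitly as $-V(c)$ and prove the identity $-V(c) = v_\sigma$ by an orthogonality argument, which requires the \emph{other} half of the harmonicity of $I_\sigma$, namely $I_\sigma \in \ker\bigl((\partial_{d+1}^Z)^\adjoint\bigr)$, hence $I \in (\ker\partial_d^X)^\perp$ (a fact the paper records right after \cref{eq:current-kook-lee}; note your phrasing slightly conflates $I$ with $I_\sigma$ here, but the inference is sound since $\image\partial_{d+1}^Z \subset C_d(X)$ and $\sigma \perp C_d(X)$). So your argument uses more of the hypotheses, but it buys something the paper's does not: since $\partial_d^Z(\sigma) = \partial_d\sigma$, your computation $(\delta_{d-1}^Z\phi)(\sigma) = \phi(\partial_d\sigma) = v_\sigma$ simultaneously establishes the paper's \cref{lemma:v_sigma}, together with the explicit formula $v_\sigma = -\langle I, c\rangle_{C_d(X)}$, which the paper derives separately. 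In short: the paper's proof is more economical via the uniqueness characterization; yours is longer but self-contained in a stronger sense, folding the subsequent lemma into the same calculation.
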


\begin{proof}
    We check that the restriction of $\delta_{d-1}^Z \phi$ to $X$ agrees with $V$ and that $\delta_{d-1}^Z\phi (I_\sigma) = 0$. Indeed, for $\alpha \in C_d(X)$, we have $$\delta_{d-1}^Z \phi (\alpha) = \phi(\partial_d^Z \alpha) = \phi(\partial_d^X \alpha) = \delta_{d-1}^X \phi (\alpha) = V(\alpha).$$
    In addition, by definition (\cref{eq:current-kook-lee}), we have that $I_\sigma \in \ker \partial_d^Z$, which implies
    $$\delta_{d-1}^Z \phi(I_\sigma) = \phi (\partial_d^Z I_\sigma) = 0,$$
    proving that $\delta_{d-1}^Z\phi = V_\sigma$ as it satisfies its two defining conditions.
\end{proof}

\begin{lemma}
\label[lemma]{lemma:v_sigma}
    Given a potential $\phi \in C^{d-1}(X)$ for $V$, i.e.~$V= \delta_{d-1}^X \phi$, we have
    \begin{equation}
    \label{eq:v_sigma}
        v_\sigma = \phi(\partial_d\sigma)
    \end{equation}
    where $v_\sigma$ is the component accompanying $\sigma^\dual$ in \cref{eq:voltage-kook-lee}.
\end{lemma}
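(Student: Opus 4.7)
The plan is to extract $v_\sigma$ by evaluating $V_\sigma$ on $\sigma$ and then chain through the preceding lemma and the definition of the extended boundary on the complex $Z$.

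First I would observe that, from the decomposition $V_\sigma = V + v_\sigma\,\sigma^\dual$ in $C^d(X \sqcup \{\sigma\}) \cong C^d(X) \oplus \mathrm{span}(\sigma^\dual)$ and the fact that $V \in C^d(X)$ is understood to vanish on the new basis element $\sigma$, we have
\[
V_\sigma(\sigma) = V(\sigma) + v_\sigma\,\sigma^\dual(\sigma) = 0 + v_\sigma = v_\sigma.
\]
So the task reduces to computing $V_\sigma(\sigma)$.

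Next I would apply \cref{lemma:potential_V_sigma}, which states that the same $\phi$ is a potential for $V_\sigma$ with respect to the extended coboundary, i.e.\ $V_\sigma = \delta_{d-1}^Z \phi$. Evaluating on $\sigma$ and unfolding the definition of the coboundary as the dual of the boundary gives
\[
v_\sigma = V_\sigma(\sigma) = (\delta_{d-1}^Z \phi)(\sigma) = \phi\bigl(\partial_d^Z \sigma\bigr).
\]

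Finally, I would invoke the construction of the extended complex in \cref{eq:Z-complex}: by definition, $\partial_d^Z(\sigma) = -\partial_d^X(c)$, which equals $\partial_d \sigma$ as prescribed by the current-generator condition in \cref{def:current-generator}. Substituting this identification yields $v_\sigma = \phi(\partial_d \sigma)$, as claimed. The argument is essentially bookkeeping, and the only subtlety is keeping straight the three different meanings of ``boundary'' ($\partial_d$ on an abstract simplex, $\partial_d^X$ on chains of $X$, and $\partial_d^Z$ on chains of $X \sqcup \{\sigma\}$); I expect no real obstacle since \cref{lemma:potential_V_sigma} does the heavy lifting.
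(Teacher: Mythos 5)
Your proof is correct and takes essentially the same route as the paper: both rest on \cref{lemma:potential_V_sigma} to get $V_\sigma = \delta_{d-1}^Z \phi$ and then evaluate this identity, using $\partial_d^Z(\sigma) = -\partial_d^X(c) = \partial_d\sigma$ from the construction of $Z$. The only cosmetic difference is that you evaluate directly at $\alpha = \sigma$ (correctly noting that the extension of $V$ vanishes there), whereas the paper evaluates at a general $\alpha = \alpha_X + \alpha_\sigma\,\sigma$ and compares the coefficient of $\alpha_\sigma$.
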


\begin{proof}
    From \cref{lemma:potential_V_sigma}, we know that $\delta_{d-1}^Z \phi = V_\sigma$. 
    Let $\alpha \in C_d(X\sqcup \{\sigma\})$, and write $\alpha = \alpha_X + \alpha_\sigma \sigma$. Then, we have
    $$\delta_{d-1}^Z \phi (\alpha) = \phi \left( \partial_{d}^Z (\alpha)\right) = \phi \left( \partial_d^X(\alpha_X)\right) + \alpha_\sigma\, \phi \left( \partial_d(\sigma)\right) = V (\alpha_X) + \alpha_\sigma\, \phi \left( \partial_d(\sigma)\right), $$
    but also,
    $$V_\sigma (\alpha) = V(\alpha_X) + v_\sigma \, \alpha_\sigma \,\sigma^\dual(\sigma) =V(\alpha_X) + v_\sigma \, \alpha_\sigma.$$
    Since $\delta_{d-1}^Z \phi = V_\sigma$, and given that $\alpha_\sigma$ is arbitrary, we conclude:
    \[\phi(\partial_d\sigma) = v_\sigma\]
    as desired.
\end{proof}

Finally, we are ready to prove that the definition of effective resistance by Kook and Lee coincides with \cref{def:ER_chains_cochains} when $\sigma \in C_d(X)$.

\begin{proposition}
\label[proposition]{prop:equivalence-kook-and-lee}
    Let $\sigma \in C_d(X)$ be a current generator, and define the current $I \in C_d(X)$ and $i_\sigma \in \Rspace$ (resp.~the voltage $V\in C^d(X)$ and $v_\sigma \in \Rspace$) as in \cref{eq:current-kook-lee} (resp.~\cref{eq:voltage-kook-lee}). Let $\phi \in C^{d-1}(X)$ be a potential for $V$, i.e.~$\delta_{d-1}^X \phi = V$. Then, the following equality holds:
    \[\mathcal{R}_d(\sigma, \sigma) = -\dfrac{v_\sigma}{i_\sigma}\]
    or equivalently, $r_\sigma = r'_\sigma$.
\end{proposition}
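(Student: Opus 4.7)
The plan is to identify the Kook--Lee current $I\in C_d(X)$ with $-i_\sigma\,\mathcal{T}_d(\sigma)$, where $\mathcal{T}_d$ is the effective resistance operator on chains from \cref{eq:ER operator chains}. Once this identification is established, \cref{cor:ER postive semi-definite} gives $\|\mathcal{T}_d(\sigma)\|^2 = \mathcal{R}_d(\sigma,\sigma)$, so $\|I\|^2 = i_\sigma^2\,\mathcal{R}_d(\sigma,\sigma)$; combining with the Kook--Lee normalization identity $\|I\|^2 = -\,i_\sigma v_\sigma$, which is read off directly from \cref{eq:equation_vsigma_isigma}, and dividing by $i_\sigma^2\neq 0$, then yields $\mathcal{R}_d(\sigma,\sigma) = -v_\sigma/i_\sigma$, as required.

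The crux is to extract two separate constraints on $I$ from the defining condition $I_\sigma = I + i_\sigma\sigma \in \ker \Lcal^Z_d = \ker \partial_d^Z \cap \ker((\partial_{d+1}^Z)^\adjoint)$. First, because $\sigma\in C_d(X)$ in this proposition, $\partial_d^Z\sigma$ equals $\partial_d^X\sigma$, so the cycle condition $\partial_d^Z I_\sigma = 0$ collapses to $\partial_d^X(I + i_\sigma\sigma)=0$, placing $I + i_\sigma\sigma\in \ker\partial_d^X$. Second, the orthogonality condition $I_\sigma\in (\image\,\partial_{d+1}^Z)^\perp$, combined with $\image\,\partial_{d+1}^Z = \ker\partial_d^X \subset C_d(X)$ and the orthogonality (in the inner product on $C_d(X\sqcup\{\sigma\})$) between the extra basis element $\sigma$ and $C_d(X)$, forces $I\in(\ker\partial_d^X)^\perp$ inside $C_d(X)$, just as the excerpt already observes.

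These two facts then assemble into the orthogonal decomposition in $C_d(X)$
\[
\sigma \;=\; -\frac{I}{i_\sigma}\;+\;\frac{I + i_\sigma\sigma}{i_\sigma},
\]
whose first summand lies in $(\ker\partial_d^X)^\perp$ and whose second summand lies in $\ker\partial_d^X$. By \cref{prop:properties of R operator}, $\mathcal{T}_d$ is precisely the orthogonal projection of $C_d(X)$ onto $(\ker\partial_d^X)^\perp$, so uniqueness of the orthogonal decomposition forces $\mathcal{T}_d(\sigma) = -I/i_\sigma$, i.e., $I = -i_\sigma\,\mathcal{T}_d(\sigma)$, and the argument closes as in the first paragraph.

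The main delicacy I anticipate is bookkeeping: $I_\sigma$ lives in $C_d(X\sqcup\{\sigma\})$, and the argument requires carefully distinguishing the extra basis element $\sigma\in C_d(X\sqcup\{\sigma\})$ from the homonymous chain $\sigma\in C_d(X)$ when unpacking the Hodge kernel condition into its cycle and orthogonality components, and ensuring that both components land cleanly inside $C_d(X)$. Once this identification is made, the remaining steps are a one-line algebraic manipulation using \cref{cor:ER postive semi-definite} and \cref{eq:equation_vsigma_isigma}.
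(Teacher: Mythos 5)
Your proof is correct, and it takes a genuinely different route from the paper's. The paper argues through the potential: it solves \(\cochainLup{d-1}\phi = \flat_{d-1}^X\beta\) via \cref{prop:potential_and_voltage_law}, expresses \(\phi\) with the pseudoinverse of the up Laplacian, pairs against \(\partial_d\sigma\), and invokes \cref{lemma:v_sigma} (namely \(v_\sigma = \phi(\partial_d\sigma)\)) together with \(\beta = -i_\sigma\partial_d\sigma\) to conclude \(v_\sigma = -i_\sigma\Rcal_d(\sigma,\sigma)\); in particular, the hypothesis on \(\phi\) in the statement is actually used there. You bypass the potential entirely: you split the harmonicity condition \cref{eq:current-kook-lee} into the cycle constraint \(I + i_\sigma\sigma \in \kernel\partial_d^X\) and the orthogonality constraint \(I \in (\kernel\partial_d^X)^\perp\), recognize from \cref{prop:properties of R operator} that \(\ERoperator_d\) is the orthogonal projection onto \((\kernel\partial_d)^\perp\), and deduce \(\ERoperator_d(\sigma) = -I/i_\sigma\) from uniqueness of the orthogonal decomposition; then \cref{cor:ER postive semi-definite} and the normalization \cref{eq:equation_vsigma_isigma} close the argument in one line, with \cref{lemma:v_sigma} and \cref{prop:potential_and_voltage_law} never needed. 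All your inputs are already established in the paper (\(\image\partial_{d+1}^Z = \kernel\partial_d^X\); the extra generator being orthogonal to \(C_d(X)\) with unit weight, which is what makes the perp condition on \(I_\sigma\) land cleanly on \(I\) inside \(C_d(X)\); \(I \in (\kernel\partial_d^X)^\perp\); and \(i_\sigma \neq 0\)), and the bookkeeping you flag — distinguishing the formal generator cell from the homonymous chain \(\sigma \in C_d(X)\), with \(\partial_d^Z\) of the former equal to \(\partial_d^X\sigma\) by the current-generator condition — is exactly the care required and is handled correctly. What your route buys: it identifies the Kook--Lee current structurally as \(I = -i_\sigma\,\ERoperator_d(\sigma)\), the projection of \(-i_\sigma\sigma\) onto the complement of the cycle space, and it yields the byproduct \(\Rcal_d(\sigma,\sigma) = \|I\|_{C_d(X)}^2/i_\sigma^2\), i.e., effective resistance as dissipated energy per unit squared current — a Thomson-principle-flavored identity the paper's computation does not surface. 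What the paper's route buys: it stays within the circuit formalism (potential and Green's function) developed in that subsection and reuses \cref{lemma:potential_V_sigma,lemma:v_sigma}, which are needed anyway for the broader comparison with Kook and Lee.
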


\begin{proof}
    Denoting the boundary current $\beta = \partial_d^X I$, from \cref{prop:potential_and_voltage_law} we know that the potential $\phi$ must satisfy $\chainLup{d-1} \phi = \flat^X_{d-1}\beta$.
    Using the pseudo inverse, we obtain $\phi = (\chainLup{d-1} )^+ \flat_{d-1}^X \beta$. Applying $(\flat_d^X)^{-1}$ to both sides, and \cref{lem:L pseudoinverse chains cochains}, $$(\flat_d^X)^{-1} \phi = (\Lcal^{\text{up}}_{d-1})^+ \, \beta \in C_{d-1}(X). $$ 
    Considering the inner product with $\partial_d \sigma$ in $C_{d-1}(X)$, we obtain the following sequence of equivalences:
    \begin{align}
        \langle (\Lcal^{\text{up}}_{d-1})^+ \, \beta, \partial_d \sigma\rangle_{C_{d-1}(X)} & =  \langle(\flat_d^X)^{-1} \phi , \partial_d\sigma \rangle_{C_{d-1}(X)} & \nonumber\\
        & = \phi (\partial_d \sigma) & (\text{definition of }\flat_{d}^{-1})\nonumber\\
        & = v_\sigma. & (\text{\cref{lemma:v_sigma}}) \label{eq:inner_product_v_sigma} 
    \end{align}
     
    We know that $\beta = \partial_d^X I = -i_\sigma \partial_d(\sigma)$ since $0 = \partial_d^Z I_\sigma = \partial_d^X I + i_\sigma \partial_d(\sigma)$.
    Substituting this in \cref{eq:inner_product_v_sigma}, we conclude
    \begin{align*}
        v_\sigma = - i_\sigma\langle (\Lcal^{\text{up}}_{d-1})^+ \partial_d \sigma, \partial_d \sigma\rangle_{C_{d-1}(X)} = - i_\sigma\langle \partial_d^\adjoint (\Lcal^{\text{up}}_{d-1})^+ \partial_d \sigma, \sigma\rangle_{C_{d}(X)} = -i_\sigma \mathcal{R}_d(\sigma, \sigma).
    \end{align*}
    Hence, the result follows.    
\end{proof}

In~\cite[Theorem 4.2]{KookLee2018}, the authors provided a matrix characterization of their notion of effective resistance (\cref{eq:er-kook-lee}) in terms of the \emph{Green's function}, i.e., the inverse of the Hodge Laplacian matrix for $X$  in the standard bases, that is \(L_{d-1} = L_{d-1, \mathrm{up}} + L_{d-1, \mathrm{down}}\), where $L_{d-1, \mathrm{up}}$ and $L_{d-1,\mathrm{down}}$ are given in \cref{eq:L_up,eq:L_down}. Assuming that the reduced homology in dimension \(d-1\) is trivial (i.e., \(\widetilde{\mathrm{H}}_{d-1}(K, \mathbb{R}) = \{0\}\)), the Hodge theorem guarantees that \(L_{d-1}\) is invertible. Under this assumption, Kook and Lee \cite{KookLee2018} defined 
\begin{equation}\label{eq:R'_p}
    R'_d \coloneqq B_d^\top \, L_{d-1}^{-1} \, B_d.
\end{equation}
For a given \(d\)-simplex \(\tau \in K_d\), the effective resistance defined in \cref{eq:er-kook-lee} is equivalent to the diagonal entry of $R'_d$, that is, \(r'_\tau = R'_d(\tau, \tau)\).

We now show that the matrix \(R'_d\) defined by~\cite{KookLee2018} coincides with the matrix $R_d$ (see \cref{eq:R_p_matrix_original}) introduced by Osting et al.~\cite{OstingPalandeWang2020}, where the version from~\cite{OstingPalandeWang2020} is formulated in a more general setting (with no assumptions on the invertibility of the Laplacian) and has already be shown to be the matrix representation of our effective resistance bilinear form in the standard bases in \cref{sec:connection_Osting}. 

\begin{proposition}
\label[proposition]{prop:R_d=R'_d}
    When the Hodge Laplacian \(L_{d-1}\) is invertible, and only the $d$-dimensional simplices are weighted, the two resistance matrices agree:
    \(
    R_d = R'_d,
    \)
    or, equivalently,
    \[B_d^\top \left(
    B_d  W_{d} B_d^\top \right)^{+
    } B_d  = B_d^\top(B_d W_d D_d^\top  + B_{d-1}^\top B_{d-1} )B_d .  \]    
\end{proposition}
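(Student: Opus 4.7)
The plan is to reduce the matrix identity to the operator-level statement that $L_{d-1,\mathrm{up}}^{+}$ and $L_{d-1}^{-1}$ agree on $\image B_d$. Since both sides of the desired identity are of the form $B_d^\top(\cdot) B_d$ and every vector produced by the right-most $B_d$ lies in $\image B_d$, this restricted agreement is exactly what is needed.

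First I would write out the two summands of $L_{d-1}$ under the stated hypothesis (unit weights on all simplices of dimension $\le d-1$): by \cref{eq:L_up} and \cref{eq:L_down} they reduce to $L_{d-1,\mathrm{up}} = B_d W_d B_d^\top$ and $L_{d-1,\mathrm{down}} = B_{d-1}^\top B_{d-1}$, both symmetric positive semi-definite. The chain-complex identity $B_{d-1} B_d = 0$ then provides two key cancellations, namely $L_{d-1,\mathrm{up}}(\image B_{d-1}^\top) = 0$ and $L_{d-1,\mathrm{down}}(\image B_d) = 0$, and also implies $\image B_d \perp \image B_{d-1}^\top$. Because each summand is symmetric, $\image L_{d-1,\mathrm{up}} = \image B_d$ and $\image L_{d-1,\mathrm{down}} = \image B_{d-1}^\top$; the invertibility hypothesis on $L_{d-1}$ then forces the orthogonal direct sum $C_{d-1} = \image B_d \oplus \image B_{d-1}^\top$.

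Next comes the core argument. Fix $v \in \image B_d$ and set $y := L_{d-1}^{-1} v$, decomposing $y = y_1 + y_2$ with $y_1 \in \image B_d$ and $y_2 \in \image B_{d-1}^\top$. Applying $L_{d-1}$ and using the two cancellations above yields $v = L_{d-1,\mathrm{up}} y_1 + L_{d-1,\mathrm{down}} y_2$, where the two terms live in the orthogonal subspaces $\image B_d$ and $\image B_{d-1}^\top$ respectively. Uniqueness of this decomposition forces $L_{d-1,\mathrm{up}} y_1 = v$ and $L_{d-1,\mathrm{down}} y_2 = 0$. Since $y_2 \in \image B_{d-1}^\top = (\ker B_{d-1})^\perp$ and $L_{d-1,\mathrm{down}} y_2 = B_{d-1}^\top B_{d-1} y_2 = 0$ implies $y_2 \in \ker B_{d-1}$, the intersection $(\ker B_{d-1})^\perp \cap \ker B_{d-1} = \{0\}$ gives $y_2 = 0$. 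Hence $y = y_1 \in \image B_d = (\ker L_{d-1,\mathrm{up}})^\perp$ and $L_{d-1,\mathrm{up}} y = v$, which is exactly the defining characterization of $L_{d-1,\mathrm{up}}^{+} v$. Taking $v = B_d \alpha$ for arbitrary $\alpha$ and pre-multiplying by $B_d^\top$ yields $R_d = R'_d$.

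I do not foresee a genuine obstacle here; the only delicate point is bookkeeping the two cancellations carefully so that the decomposition of $v$ under $L_{d-1}$ is actually orthogonal, and then invoking the correct characterization of the Moore--Penrose pseudoinverse (the unique preimage lying in $(\ker L_{d-1,\mathrm{up}})^\perp$). Both steps rest on nothing more than $B_{d-1} B_d = 0$, the symmetry of the two Laplacian summands, and invertibility of $L_{d-1}$.
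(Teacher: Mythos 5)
Your proof is correct, but it takes a genuinely different route from the paper's. The paper invokes the Fill--Fishkind formula for the pseudoinverse of a sum (\cref{thm:presudoinverse-sum}) with $A = B_d W_d B_d^\top$ and $C = B_{d-1}^\top B_{d-1}$: after verifying the rank condition via the Hodge decomposition and showing $\mathrm{col}(C) = \mathrm{col}(A)^\perp$, it expands $(A+C)^+$ into five terms and kills all but $A^+$ using $B_d^\top C = 0$ and $C^+ B_d = 0$. You instead prove the sharper operator-level statement that $L_{d-1}^{-1}$ and $(B_d W_d B_d^\top)^+$ agree on $\image B_d$, by decomposing $y = L_{d-1}^{-1}v$ along the orthogonal splitting $C_{d-1} = \image B_d \oplus \image B_{d-1}^\top$ and invoking the defining characterization of the Moore--Penrose inverse (cf.\ condition (1) of \cref{def:pseudoinverse}: the unique preimage of $v \in \image L_{d-1,\mathrm{up}}$ lying in $(\kernel L_{d-1,\mathrm{up}})^\perp$). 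Each step of yours checks out: the two cancellations follow from $B_{d-1}B_d = 0$; the images of the symmetric summands equal $\image B_d$ and $\image B_{d-1}^\top$ because $\image(MM^\top) = \image M$ and $W_d$ is positive definite; invertibility of $L_{d-1}$ forces the orthogonal direct sum; and $y_2 = 0$ follows since $y_2^\top B_{d-1}^\top B_{d-1} y_2 = \|B_{d-1}y_2\|^2 = 0$ places $y_2$ in $\kernel B_{d-1} \cap (\kernel B_{d-1})^\perp = \{0\}$. What your route buys is self-containment and transparency: it needs no external pseudoinverse-of-a-sum lemma and isolates the conceptually essential fact, namely that the Green's function and the up-Laplacian pseudoinverse coincide on the only subspace that matters, $\image B_d$. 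What the paper's route buys is a reusable general identity for $(A+C)^+$ that applies beyond this specific pair of summands, at the cost of a more mechanical term-by-term cancellation. (A minor remark unrelated to your argument: the displayed identity in the proposition's statement contains typos --- $D_d^\top$ should read $B_d^\top$, and the right-hand side is missing the inverse on the sum --- but this affects neither proof.)
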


To prove this proposition we first need the following result from \cite{FillFishkind2000}. For a subspace \(\Omega\), let \(P_\Omega\) denote the matrix of the orthogonal projection onto \(\Omega\).

\begin{lemma}[Theorem 3, \cite{FillFishkind2000}] \label[lemma]{thm:presudoinverse-sum}
    Let \(A, C \in \mathbb{R}^{n \times n}\) with \(\mathrm{rank}(A + C) = \mathrm{rank}(A)+ \mathrm{rank}(C)\). Then
    \begin{equation}\label{eq:sum_pseudoinverse}
        (A + C)^{+} = (I - S) A^{+} (I-T) + S C^{+} T,
    \end{equation}
    where $I$ is the identity matrix, \(S:= \left( P_{\mathrm{col}(C^\top)} P_{\mathrm{col}(A^\top)^{\perp}}\right)^{+}\) and \(T:= \left( P_{\mathrm{col}(A)^{\perp}} P_{\mathrm{col}(C)}\right)^{+}\).
\end{lemma}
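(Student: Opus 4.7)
The plan is to apply the Fill--Fishkind pseudoinverse-of-a-sum formula (\cref{thm:presudoinverse-sum}) to the splitting of the Hodge Laplacian into up and down parts. Let
\[
A \coloneqq B_d W_d B_d^\top, \qquad C \coloneqq B_{d-1}^\top B_{d-1},
\]
so that, under the hypothesis that only $d$-simplices are weighted (hence $W_{d-1}=\mathrm{Id}$), the Hodge Laplacian appearing in \cref{eq:R'_p} is exactly $L_{d-1}=A+C$. The target identity then reads $B_d^\top A^+ B_d = B_d^\top (A+C)^{-1} B_d$. The central structural input I will use is the boundary-of-boundary relation $B_{d-1}B_d=0$, which is equivalent to $\mathrm{col}(B_d)\subseteq \ker B_{d-1}=\mathrm{col}(B_{d-1}^\top)^{\perp}$. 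Since $A$ and $C$ are symmetric with $\mathrm{col}(A)=\mathrm{col}(B_d)$ and $\mathrm{col}(C)=\mathrm{col}(C^\top)=\mathrm{col}(B_{d-1}^\top)$, this yields the orthogonality
\[
\mathrm{col}(A) \perp \mathrm{col}(C).
\]

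Next, I would verify the rank hypothesis of \cref{thm:presudoinverse-sum}. Because $\mathrm{col}(A)$ and $\mathrm{col}(C)$ are orthogonal, $A+C$ acts as $A$ on $\mathrm{col}(A)$, as $C$ on $\mathrm{col}(C)$, and as zero on $(\mathrm{col}(A)\oplus\mathrm{col}(C))^\perp$, so $\mathrm{col}(A+C)=\mathrm{col}(A)\oplus\mathrm{col}(C)$ and therefore $\mathrm{rank}(A+C)=\mathrm{rank}(A)+\mathrm{rank}(C)$. By the invertibility assumption on $L_{d-1}$, $(A+C)^+=(A+C)^{-1}$, and we are entitled to apply Fill--Fishkind.

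Now I would simplify the projectors $S$ and $T$ produced by \cref{thm:presudoinverse-sum}. By symmetry of $A$ and $C$, $\mathrm{col}(A^\top)=\mathrm{col}(A)$ and $\mathrm{col}(C^\top)=\mathrm{col}(C)$. Since $\mathrm{col}(C)\subseteq \mathrm{col}(A)^{\perp}$, the projector $P_{\mathrm{col}(A)^{\perp}}$ restricts to the identity on $\mathrm{col}(C)$ and annihilates $\mathrm{col}(A)$, whence
\[
P_{\mathrm{col}(C)}P_{\mathrm{col}(A)^{\perp}}=P_{\mathrm{col}(C)}=P_{\mathrm{col}(A)^{\perp}}P_{\mathrm{col}(C)}.
\]
As orthogonal projections are their own Moore--Penrose inverses, $S=T=P_{\mathrm{col}(C)}$. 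The Fill--Fishkind identity \cref{eq:sum_pseudoinverse} thus collapses to
\[
(A+C)^{-1} = (I-P_{\mathrm{col}(C)})\,A^+\,(I-P_{\mathrm{col}(C)}) + P_{\mathrm{col}(C)}\,C^+\,P_{\mathrm{col}(C)}.
\]

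Finally, I would sandwich this identity with $B_d^\top$ on the left and $B_d$ on the right, using the key vanishing $P_{\mathrm{col}(C)}B_d=0$, which is immediate from $\mathrm{col}(B_d)=\mathrm{col}(A)\perp\mathrm{col}(C)$, and its transpose $B_d^\top P_{\mathrm{col}(C)}=0$. The second summand is killed entirely, and in the first summand the factors $(I-P_{\mathrm{col}(C)})$ act trivially, yielding $B_d^\top(A+C)^{-1}B_d = B_d^\top A^+ B_d$, which is the stated identity $R'_d=R_d$. I do not expect a serious obstacle: once the orthogonality $B_{d-1}B_d=0$ is noted, the rank condition, the simplification of $S,T$, and the final cancellation all follow from one fact, so the main care needed is just verifying cleanly that $\mathrm{col}(A)\perp\mathrm{col}(C)$ so that both hypotheses of \cref{thm:presudoinverse-sum} and the projector identities hold.
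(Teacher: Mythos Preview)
Your proposal does not address the stated lemma at all. The lemma you were given is the Fill--Fishkind pseudoinverse-of-a-sum formula, which in the paper is an \emph{external cited result} (Theorem~3 of \cite{FillFishkind2000}) and is not proved there. What you have written is instead a proof of \cref{prop:R_d=R'_d} (the identity $R_d=R'_d$), i.e.\ the proposition that \emph{applies} this lemma. So, as a proof of the stated lemma, the proposal is simply off-target.

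That said, if one compares your argument to the paper's own proof of \cref{prop:R_d=R'_d}, they are essentially the same. Both set $A=B_dW_dB_d^\top$ and $C=B_{d-1}^\top B_{d-1}$, verify the rank hypothesis of \cref{thm:presudoinverse-sum}, simplify $S$ and $T$ to the projector onto $\mathrm{col}(C)$ (the paper writes it as $CC^+$), expand \cref{eq:sum_pseudoinverse}, and then kill all but the $A^+$ term after sandwiching with $B_d^\top(\cdot)B_d$. Your route is marginally more economical in two spots: you derive the rank condition directly from the orthogonality $\mathrm{col}(A)\perp\mathrm{col}(C)$ rather than from the Hodge decomposition, and you only use the inclusion $\mathrm{col}(C)\subseteq\mathrm{col}(A)^\perp$ rather than the equality $\mathrm{col}(C)=\mathrm{col}(A)^\perp$ that the paper establishes. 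Neither difference is substantive; the overall strategy is identical.
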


\begin{proof}[Proof of Proposition \ref{prop:R_d=R'_d}]
Let \(A = B_{d} W_d B_{d}^\top \) and \(C = B_{d-1}^\top  B_{d-1} \). \(L_{d-1} = A+C\) is invertible, so \((A+C)^{-1} = (A+C)^{+}\). On the other hand, invertibility implies \(\ker (A+C) = \{0\}\), and from the Hodge decomposition for simplicial complexes we get
\begin{equation}\label{eq:Hodge_decomposition_L_invertible}
    C_{d-1}(K,\mathbb{R}) \simeq \mathrm{col}(A) \oplus \mathrm{col}(C),
\end{equation}
ensuring that \(\mathrm{rank}(A+C) = \mathrm{rank}(A) + \mathrm{rank}(C)\). This means that we can apply \cref{thm:presudoinverse-sum}.

We now show that \(\mathrm{col}(A)^\perp = \mathrm{col}(C)\). Note that, from the vanishing of the composition of consecutive coboundary maps, we have
\(
C A = B_{d-1}^\top B_{d-1} B_d W_d B_d^\top = 0
\).
Let \(x = C x' \in \mathrm{col}(C)\) and \(y = A y' \in \mathrm{col}(A)\). Then, using the symmetry of \(A\) and \(C\), and the inner product on $C_{d-1}(X)$, which is the standard $\ell^2$ product since $W_{d-1}$ is the identity matrix, we compute:
\[
\langle x, y \rangle_{C_{d-1}} = x^\top  y  = (x')^\top C A y' = 0.
\]
Hence, \(\mathrm{col}(C) \subseteq \mathrm{col}(A)^\perp\). 
\cref{eq:Hodge_decomposition_L_invertible} implies that both $\mathrm{col}(C)$ and $\mathrm{col}(A)^\perp$ have the same dimension, and thus \(\mathrm{col}(C) = \mathrm{col}(A)^\perp\).

Therefore, we have
\begin{align*}
    \left( P_{\mathrm{col}(C^\top)} P_{\mathrm{col}(A^\top)^{\perp}}\right)^{+} 
      & =  \left( P_{\mathrm{col}(C)} P_{\mathrm{col}(A)^{\perp}}\right)^{+} & (C = C^\top, \ A = A^\top) \\
      & =  \left( P_{\mathrm{col}(C)}^2\right)^{+} & (\mathrm{col}(C) = \mathrm{col}(A)^\perp)\\
      & =  \left( P_{\mathrm{col}(C)}\right)^{+}  & \text{(projection matrix is idempotent)} \\
      & = \left( C C^+\right)^{+} & \text{(\cref{prop:pseudoinverse-properties})}\\
      & =  C C^+ & \text{(properties of pseudo-inverses)}
\end{align*}
and similarly, \( \big( P_{\mathrm{col}(A)^{\perp}} P_{\mathrm{col}(C)}\big)^{+} = C C^+\). Applying \cref{eq:sum_pseudoinverse}, we obtain:
\begin{align}
    (A + C)^{+}
    &= (I - C C^+) A^{+} (I - C C^+) + C C^+ C^{+} C C^+ \notag \\
    &= A^+ - A^+ C C^+ - C C^+ A^+ + C C^+ A^+ C C^+ + C C^+ C^+. \label{eq:pseudoinverse_laplacian}
\end{align}

Next, we observe that \(B_d^\top C = B_d^\top B_{d-1}^\top B_{d-1} = 0\) due to the composition of consecutive coboundary maps vanishing. In addition, using \cref{prop:pseudoinverse-properties}, we get:
\[
C^+ B_d = (C^\top C)^+ C^\top B_d = (C^\top C)^+ B_{d-1}^\top B_{d-1} B_d^\top = 0.
\]

Therefore, when multiplying both sides of~\cref{eq:pseudoinverse_laplacian} on the left by \(B_d^\top\) and on the right by \(B_d\), all terms vanish except the first, and we conclude the desired result:
\[
R_d' = B_d^\top (A + C)^{-1} B_d = B_d^\top (A + C)^{+} B_d = B_d^\top A^{+} B_d = R_d.\qedhere
\]
\end{proof}

\subsubsection{Connection to the Definition of Black and Maxwell} 
\label{sec:connection_black_maxwell}

Black and Maxwell~\cite{BlackMaxwell2021} worked with weighted simplicial complexes, but in their definition of effective resistance they assigned weights only to the $p$-simplices, assuming unit weights on the $(p-1)$-simplices. This assumption is consistent with the settings in~\cite{OstingPalandeWang2020,KookLee2018}, but it is more restrictive than our formulation, which accommodates weights on both $p$-simplices and $(p-1)$-simplices.

The up Laplacian matrix considered in \cite{BlackMaxwell2021} in defining effective resistance is $\Lupmatrix{p-1}{sym} = B_pW_pB_p^\top$ (see \cref{eqn:relative-er}).
Specifically, they defined the effective resistance of any $(p-1)$-cycle $\gamma$ to be 
\[
    r''_{\gamma}\coloneqq 
    \begin{cases}
    [\gamma]_{\caB_{p-1}}^\top (\Lupmatrix{p-1}{sym})^+ [\gamma]_{\caB_{p-1}}, & \text{if } \gamma \text{ is a boundary} \\
    \infty, & \mathrm{otherwise}
    \end{cases},
\]
where $[\gamma]_{\caB_{p-1}}$ is the vector representation of $\gamma$ in the standard basis \(\caB_{p-1}\) of $(p-1)$-chains.

We now relate their definition to that of Osting et al.~\cite{OstingPalandeWang2020}, and hence to ours. If \(\gamma\) is a boundary, then \(\gamma = \partial_p \beta\) for some \(p\)-chain \(\beta\). In this case,
\begin{align*}
    r''_{\gamma} 
    & = [\partial_p \beta]_{\caB_{p-1}}^\top (\Lupmatrix{p-1}{sym})^+ [\partial_p \beta]_{\caB_{p-1}} & \text{(definition of } r''_{\gamma}\text{)}\\
    & = [\beta]_{\caB_{p}}^\top ([\partial_p]_{\caB_{p-1}}^{\caB_p})^\top (\Lupmatrix{p}{sym})^+ [\partial_p]_{\caB_{p-1}}^{\caB_p} [\beta]_{\caB_{p}} \\[2pt]
    & = [\beta]_{\caB_{p}}^\top B_p^\top (\Lupmatrix{p}{sym})^+ B_p [\beta]_{\caB_{p}} & \text{(matrix form of $\partial_p$)}\\
    & = [\beta]_{\caB_{p}}^\top R_p [\beta]_{\caB_{p}} & \text{(\cref{eq:R_p_matrix_original})} \\
    & = r_\beta.
\end{align*}

Thus, Black and Maxwell’s effective resistance agrees with ours on boundaries: what they call the effective resistance of the boundary \(\partial_p \beta\) is exactly what we call the effective resistance of the \(p\)-chain \(\beta\). This parallels the graph case, where the effective resistance of an edge coincides with the effective resistance of its boundary, i.e., the two vertices it connects (see \cref{sec:resistance-graph}).

\section{Metric Properties of Effective Resistance}
\label{sec:metric-properties}

In this section, we study the metric structures induced by the effective resistance bilinear forms introduced in \cref{sec:ER simplicial bilinear-form}. 
We show that the \(p\)-th effective resistance bilinear form induces a pseudometric on the space of \(p\)-chains, and that the \((p+1)\)-th form induces a genuine metric on the space of \(p\)-cycles, thereby generalizing the classical vertex-based effective resistance metric in graphs~\cite{KleinRandic1993, QiuEdwin2007}.

Recall from \cref{cor:ER postive semi-definite} that the effective resistance bilinear form 
\(\Rcal_p\) is symmetric and positive semi-definite. 
For any \(\alpha, \alpha' \in C_p(K)\), we define the \emph{chain pseudometric}
\begin{equation}\label{eq:chain-pseudometric}
    d_p(\alpha, \alpha') \coloneqq \sqrt{\Rcal_p(\alpha - \alpha',\,\alpha - \alpha')} 
    = \|\ERoperator_p(\alpha - \alpha')\|_{C_p}.
\end{equation}
Because \(\Rcal_p\) is positive semi-definite, \(d_p\) satisfies non-negativity, symmetry, and the triangle inequality, but it may vanish for distinct elements when \(\alpha - \alpha' \in \ker(\ERoperator_p) = \ker(\partial_{p})\) (i.e., when \(\alpha - \alpha'\) is a cycle). Thus, \(d_p\) defines only a \emph{pseudometric} on \(C_p(K)\).

Next, we consider the \((p+1)\)-th effective resistance bilinear form \(\Rcal_{p+1}\), which acts naturally on \((p+1)\)-chains and induces a metric on the cycle space \(Z_p(K)\) as follows.
For any pair of \(p\)-cycles \(c, c' \in Z_p(K)\), choose a \((p+1)\)-chain \(\beta\) satisfying \(\partial_{p+1}\beta = c - c'\), and define
\begin{equation}\label{eq:cycle-metric}
    \tilde d_p(c, c') 
    \coloneqq \sqrt{\Rcal_{p+1}(\beta,\,\beta)} 
    = \|\ERoperator_{p+1}(\beta)\|_{C_{p+1}}.
\end{equation}
Note that the quantity \(\ERoperator_{p+1}(\beta) = \partial_{p+1}^\adjoint \big( \chainLup{p} \big)^+ \partial_{p+1}(\beta)\) depends only on the boundary \(\partial_{p+1}\beta = c - c'\), so \(\tilde d_p\) is well-defined.

\begin{theorem}\label{prop:ER-metric}
    Let \(K\) be a finite simplicial complex and \(p \ge 0\).
    \begin{enumerate}
        \item The function \(d_p\) in \cref{eq:chain-pseudometric} defines a pseudometric on \(C_p(K)\).
        \item If \(\tilde H_p(K) = 0\), then the function \(\tilde d_p\) in \cref{eq:cycle-metric} defines a metric on \(Z_p(K)\).
    \end{enumerate}
\end{theorem}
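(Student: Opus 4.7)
For Part 1, the plan is to treat \(d_p\) as the pseudometric induced by the seminorm \(\alpha \mapsto \|\ERoperator_p(\alpha)\|_{C_p}\) on \(C_p(K)\), using the norm identity in \cref{eq:ER in norm form}. Non-negativity and \(d_p(\alpha, \alpha) = 0\) are immediate. Symmetry follows from \(\ERoperator_p(\alpha - \alpha') = -\ERoperator_p(\alpha' - \alpha)\) together with invariance of \(\|\cdot\|_{C_p}\) under negation. The triangle inequality follows from linearity of \(\ERoperator_p\) applied to \((\alpha - \alpha'') = (\alpha - \alpha') + (\alpha' - \alpha'')\), combined with the triangle inequality for \(\|\cdot\|_{C_p}\). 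Since the statement only asks for a pseudometric, no positive-definiteness argument is needed — the seminorm is allowed to vanish on \(\ker \partial_p = \ker \ERoperator_p\), which by \cref{prop:properties of R operator} is exactly the obstruction to \(d_p\) being a genuine metric.

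For Part 2, I would proceed in four small steps. First, the hypothesis \(\tilde H_p(K) = 0\) forces \(Z_p(K) \subseteq B_p(K) = \image \partial_{p+1}\), so for any \(c, c' \in Z_p(K)\) there exists a lift \(\beta \in C_{p+1}(K)\) with \(\partial_{p+1}\beta = c - c'\). Second, well-definedness: if \(\beta, \beta'\) are two such lifts, then \(\partial_{p+1}(\beta - \beta') = 0\), hence \(\ERoperator_{p+1}(\beta - \beta') = \partial_{p+1}^\adjoint (\chainLup{p})^+ \partial_{p+1}(\beta - \beta') = 0\), so \(\tilde d_p(c, c')\) does not depend on the chosen lift. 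Third, non-negativity, symmetry and the triangle inequality transfer verbatim from Part 1, using that if \(\beta, \beta'\) are lifts of \(c - c'\) and \(c' - c''\) respectively, then \(\beta + \beta'\) is a lift of \(c - c''\). Fourth, for positive definiteness: if \(\tilde d_p(c, c') = 0\), then \(\ERoperator_{p+1}(\beta) = 0\) for any lift \(\beta\); by \cref{prop:properties of R operator}, \(\ker \ERoperator_{p+1} = \ker \partial_{p+1}\), so \(\partial_{p+1}\beta = 0\) and hence \(c - c' = 0\).

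The main point requiring care — rather than a genuine obstacle — is the bookkeeping around the hypothesis \(\tilde H_p(K) = 0\) versus the kernel description of \(\ERoperator_{p+1}\): the homological hypothesis is invoked exactly once, to guarantee the existence of a lift in step one, whereas independence of the lift and positive definiteness both reduce to \(\ker \ERoperator_{p+1} = \ker \partial_{p+1}\) from \cref{prop:properties of R operator}. Once these two roles are cleanly separated, the rest of the proof is a direct verification of the seminorm axioms and offers no technical difficulty.
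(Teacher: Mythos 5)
Your proposal is correct and takes essentially the same route as the paper: Part 1 is the positive-semidefiniteness/seminorm argument via \cref{eq:ER in norm form}, and Part 2 uses the hypothesis \(\tilde H_p(K)=0\) only to produce lifts, additivity of lifts for the triangle inequality, and \(\ker \ERoperator_{p+1} = \ker \partial_{p+1}\) (from \cref{prop:properties of R operator}) for both independence of the lift and the identity of indiscernibles. The only organizational difference is that the paper verifies well-definedness of \(\tilde d_p\) immediately before the theorem statement rather than as a step inside the proof, so your clean separation of the two roles of the hypotheses mirrors the paper's logic exactly.
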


\begin{proof}
    The first statement follows directly from the positive semi-definiteness of \(\Rcal_p\).
    For the second, assume \(\tilde H_p(K)=0\).
    Non-negativity and symmetry of \(\tilde d_p\) are immediate from \eqref{eq:cycle-metric}. 
    For the triangle inequality, let \(c, c', c'' \in Z_p(K)\) and choose \((p+1)\)-chains \(\beta, \gamma\) such that
    \(\partial_{p+1}\beta = c - c'\) and \(\partial_{p+1}\gamma = c' - c''\).
    The existence of such \(\beta, \gamma\) follows from the assumption that \(\tilde H_p(K) = 0\).
    Then \(\partial_{p+1}(\beta + \gamma) = c - c''\), and hence
    \[
        \tilde d_p(c, c'') 
        = \|\ERoperator_{p+1}(\beta + \gamma)\|_{C_{p+1}}
        \le \|\ERoperator_{p+1}(\beta)\|_{C_{p+1}}
        + \|\ERoperator_{p+1}(\gamma)\|_{C_{p+1}}
        = \tilde d_p(c, c') + \tilde d_p(c', c''),
    \]
    by the triangle inequality in the Hilbert space \(C_{p+1}(K)\).

    Finally, we verify the identity of indiscernibles. If \(c = c'\), we may take \(\beta = 0\), so \(\tilde d_p(c, c) = \|\ERoperator_{p+1}(0)\|_{C_{p+1}} = 0\). Conversely, suppose \(\tilde d_p(c, c') = 0\). Then \(\|\ERoperator_{p+1}(\beta)\|_{C_{p+1}} = 0\) for any \(\beta\) satisfying \(\partial_{p+1} \beta = c - c'\). 
    This implies \(\beta \in \ker(\ERoperator_{p+1})= \ker(\partial_{p+1})\), so that
    \(
    c - c' = \partial_{p+1} \beta = 0,
    \)
    yielding \(c = c'\).
    Therefore, \(\tilde d_p\) satisfies all properties of a metric.
\end{proof}

\begin{remark}\label[remark]{rem:graph-case}
    In the graph case (\(p=0\)), let \(K = G = (V,E)\) be connected. Then \(\tilde H_0(G)=0\) and \(Z_0(G)=C_0(G)\), so we identify \(V \subset Z_0(G)\).
    The restriction of \(\tilde d_0\) from \cref{eq:cycle-metric} to \(V\) is therefore a metric on the vertices. For \(u,v \in V\), choosing any \(1\)-chain \(\beta\) with \(\partial_1 \beta = u - v\) yields
    \(\tilde d_0(u,v) = \sqrt{\Rcal_1(\beta,\beta)}\),
    which is the classical effective resistance distance on graphs~\cite{KleinRandic1993, QiuEdwin2007}.
\end{remark}

\section{Foster's Theorem: From Graphs to Simplicial Complexes}
\label{sec:fosters}

In this section we extend Foster's Theorem, a property relating relative effective resistances in a graph with its 0-dimensional Betti number, to the context of simplicial complexes. We first recall Foster's Theorem for graphs.

\begin{theorem}[Foster's Theorem in graphs,~\cite{DevriendtLambiotte2022}]

\label{thm:foster_graphs}
The sum of all relative effective resistances of edges in a graph $G = (V,\, E)$ satisfies the following property:
\begin{align}\label{eq:Foster_graph}
    \sum_{e \in E} w(e) r_e = |V| - \beta_0(G), 
\end{align} 
where $\beta_0(G) = \dim \ker(\partial_1)$ is the number of connected components in the graph. 
\end{theorem}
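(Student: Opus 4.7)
The plan is to recognize the left-hand side of \cref{eq:Foster_graph} as a trace and compute that trace from the structural description of the effective resistance operator. Specifically, by \cref{rmk:relative ER}, the quantity $w(e)\,r_e$ equals the effective resistance $r_{\sqrt{w(e)}\,e}$ of the normalized chain $\sqrt{w(e)}\,e$. Since the collection $\widetilde{\Bcal}_1 = \{\sqrt{w(e)}\,e\}_{e \in E}$ is precisely the orthonormal basis of $C_1(G)$ from \cref{def:orthonormal basis}, we have
\[
    \sum_{e \in E} w(e)\, r_e
    = \sum_{e \in E} \Rcal_1\!\bigl(\sqrt{w(e)}\,e,\,\sqrt{w(e)}\,e\bigr)
    = \sum_{e \in E} \bigl\langle \ERoperator_1(\sqrt{w(e)}\,e),\, \sqrt{w(e)}\,e\bigr\rangle_{C_1},
\]
which is exactly $\operatorname{tr}(\ERoperator_1)$ by the standard trace-via-orthonormal-basis formula applied to the self-adjoint operator $\ERoperator_1$.

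Next, I would invoke \cref{prop:properties of R operator}, which states that $\ERoperator_1$ is the orthogonal projection of $C_1(G)$ onto $\image \partial_1^\adjoint$. Orthogonal projections have trace equal to the dimension of their image, so
\[
    \operatorname{tr}(\ERoperator_1) \;=\; \dim\bigl(\image \partial_1^\adjoint\bigr) \;=\; \operatorname{rank}(\partial_1^\adjoint) \;=\; \operatorname{rank}(\partial_1),
\]
using that an operator and its adjoint have the same rank.

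Finally, I would identify $\operatorname{rank}(\partial_1)$ with $|V|-\beta_0(G)$. Since $\partial_0 = 0$, the $0$-th homology satisfies $H_0(G) = C_0(G)/\image \partial_1$, so
\[
    \beta_0(G) \;=\; \dim C_0(G) - \dim\bigl(\image \partial_1\bigr) \;=\; |V| - \operatorname{rank}(\partial_1).
\]
Combining the three displays yields $\sum_{e \in E} w(e) r_e = |V| - \beta_0(G)$.

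There is no real obstacle here: once one spots that the relative effective resistances sum to a trace in the orthonormal basis, every remaining step is a one-line consequence of results already established in the paper (\cref{rmk:relative ER}, \cref{prop:properties of R operator}) together with elementary rank-nullity. The only mildly non-obvious move is the passage from the weights $w(e)r_e$ to the orthonormal basis $\widetilde{\Bcal}_1$, which is exactly what makes the trace interpretation possible; this same observation is what will generalize cleanly to higher-dimensional simplicial complexes in subsequent sections.
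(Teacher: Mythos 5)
Your proof is correct and follows essentially the same route as the paper: the paper proves the general operator form (\cref{thm:foster}) by the identical trace-of-an-orthogonal-projection argument, derives the weighted-sum version via \cref{rmk:relative ER} and the orthonormal basis $\{\sqrt{w(\sigma)}\,\sigma\}$ in \cref{cor:foster}, and then obtains the graph statement as the $p=1$ special case, exactly matching your three steps. The only (cosmetic) difference is the final bookkeeping: the paper writes the right-hand side as $n_0 - \nullity(\partial_1^\adjoint)$ while you pass through $\operatorname{rank}(\partial_1) = |V| - \beta_0(G)$, which are equal since an operator and its adjoint have the same rank---and your computation $\beta_0(G) = |V| - \operatorname{rank}(\partial_1)$ incidentally makes clear that the theorem statement's identification ``$\beta_0(G) = \dim\ker(\partial_1)$'' is a typo for $\dim\ker(\partial_1^\adjoint)$.
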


We are interested in a result similar to \cref{thm:foster_graphs} using the notion of simplicial effective resistance \(r_\sigma\). 
We begin by proving this result in the operator form, \cref{thm:foster}, and then derive the matrix form as \cref{cor:foster}. 
Our generalization offers a deeper interpretation of the classical Foster’s theorem by showing that the \emph{total effective resistance} can be viewed as the \emph{sum of the eigenvalues} (i.e., the trace) of the effective resistance operator, which is equal to its rank because the operator is an orthogonal projection.
We now present the details.

In what follows, consider a finite (oriented) weighted simplicial complex $K$, and for any degree $p\geq 0$, let $n_p=|K_p|$. 
Let \(\operatorname{rank}(\cdot)\) and \(\nullity(\cdot)\) denote the rank and nullity of a linear operator (or a matrix), respectively.  
 
\begin{theorem}[Higher-dimensional Foster's theorem: operator form]
\label{thm:foster}
Let $K$ be a finite weighted simplicial complex equipped with an inner product. For any $p\geq 0$, let $\{\lambda_i(\ERoperator_p)\}$ be the eigenvalues (with multiplicities) of the effective resistance operator $\ERoperator_p$ on chain spaces.
Then, 
\begin{equation*}\label{eq:foster}
    \sum_{i} \lambda_i(\ERoperator_p) =\rank(\partial_p^\adjoint)= 
    n_{p-1} - \nullity (\partial_p^\adjoint). 
\end{equation*} 
Moreover, given any orthonormal basis $\{\alpha_i\}$ of $C_p(K)$, 
\begin{equation*}\label{eq:foster_basis}
    \sum_i \langle \ERoperator_p(\alpha_i), \alpha_i\rangle =\rank(\partial_p^\adjoint) 
    = n_{p-1} - \nullity (\partial_p^\adjoint). 
\end{equation*} 
\end{theorem}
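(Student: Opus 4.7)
The plan is to reduce the claim to a pure linear-algebra statement about the trace of an orthogonal projection, exploiting the structural characterization of $\ERoperator_p$ already established in \cref{prop:properties of R operator}. Recall that that proposition identifies $\ERoperator_p = \partial_p^\adjoint(\partial_p^\adjoint)^+$ as the orthogonal projection of $C_p(K)$ onto $\image \partial_p^\adjoint$. Once this is in hand, both halves of the statement follow from the fact that the trace of a self-adjoint idempotent operator equals the dimension of its image.

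The steps, in order, are the following. First, I would invoke \cref{prop:properties of R operator} to conclude that $\ERoperator_p$ is self-adjoint and idempotent, so its only possible eigenvalues are $0$ and $1$. The multiplicity of the eigenvalue $1$ is $\dim(\image \ERoperator_p) = \dim(\image \partial_p^\adjoint) = \rank(\partial_p^\adjoint)$, while the eigenvalue $0$ contributes nothing to the sum. Hence
\begin{equation*}
\sum_i \lambda_i(\ERoperator_p) = \rank(\partial_p^\adjoint).
\end{equation*}
Applying the rank–nullity theorem to the map $\partial_p^\adjoint \colon C_{p-1}(K) \to C_p(K)$, whose domain has dimension $n_{p-1}$, yields
\begin{equation*}
\rank(\partial_p^\adjoint) = n_{p-1} - \nullity(\partial_p^\adjoint),
\end{equation*}
which proves the first displayed identity.

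For the basis-dependent formulation, I would appeal to the standard fact that, for any self-adjoint operator $T$ on a finite-dimensional inner-product space and any orthonormal basis $\{\alpha_i\}$, one has
\begin{equation*}
\mathrm{tr}(T) = \sum_i \langle T(\alpha_i), \alpha_i \rangle = \sum_i \lambda_i(T),
\end{equation*}
where the last equality uses self-adjointness (so $T$ is orthogonally diagonalizable). Applying this with $T = \ERoperator_p$ and combining with the first part gives the second identity.

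There is really no main obstacle: all the heavy lifting has been done in \cref{prop:properties of R operator}, which identified $\ERoperator_p$ as the projection onto $\image \partial_p^\adjoint$. The only point that deserves a brief remark in the written proof is why the trace is basis-independent and equals both the sum of eigenvalues and the dimension of the image — facts that are standard but worth naming explicitly so that the reader sees the classical Foster identity \cref{eq:Foster_graph} emerge transparently as the specialization $p=1$, where $\rank(\partial_1^\adjoint) = \rank(\partial_1) = n_0 - \dim \ker \partial_1 = |V| - \beta_0(G)$.
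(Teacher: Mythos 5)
Your proposal is correct and follows essentially the same route as the paper's proof: both rest on \cref{prop:properties of R operator} identifying $\ERoperator_p$ as the orthogonal projection onto $\image \partial_p^\adjoint$, deduce that the eigenvalues are $0$ and $1$ so their sum equals $\rank(\partial_p^\adjoint) = n_{p-1} - \nullity(\partial_p^\adjoint)$, and obtain the basis-dependent identity from the standard trace fact that the paper isolates as \cref{lem:trace}. There are no gaps; your explicit mention of rank--nullity and of why the trace is basis-independent merely makes two steps the paper leaves implicit slightly more detailed.
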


To prove the theorem, we recall the following standard fact from linear algebra (included here for completeness). The \emph{trace} of an operator $\varphi$, denoted by $\trace(\varphi)$, is the sum of its eigenvalues and can also be obtained by taking the trace of its matrix representation on any orthonormal basis.

\begin{lemma}
\label[lemma]{lem:trace}
    For a linear operator $\varphi: X\to X$ on a finite-dimensional Hilbert space $X$ and any orthonormal basis $\{x_i\}$ of $X$,  
    \[\sum_i \langle \varphi x_i, x_i \rangle = \trace (\varphi).\]
\end{lemma}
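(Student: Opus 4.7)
The plan is to unpack the definition of trace via the matrix representation in the given orthonormal basis. Fix an orthonormal basis $\{x_i\}$ of $X$ and expand $\varphi x_j = \sum_i a_{ij}\, x_i$ for scalars $a_{ij}\in \Bbbk$. Taking the inner product of both sides with $x_k$ and using orthonormality $\langle x_i, x_k\rangle = \delta_{ik}$ yields $a_{kj} = \langle \varphi x_j, x_k\rangle$. In particular, the diagonal entries of the matrix $A = [a_{ij}]$ representing $\varphi$ in this basis are $a_{ii} = \langle \varphi x_i, x_i\rangle$.

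The next step is to invoke the basis-invariance of the trace. Since the trace of an operator on a finite-dimensional space is defined (or can be equivalently characterized) as the sum of diagonal entries of any matrix representation, and since similar matrices have equal traces (as a consequence of $\trace(PAP^{-1}) = \trace(A)$ via cyclicity of the trace), we have $\trace(\varphi) = \sum_i a_{ii}$. Combining with the previous step gives
\[
\sum_i \langle \varphi x_i, x_i\rangle = \sum_i a_{ii} = \trace(\varphi),
\]
as required.

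There is no real obstacle here: the lemma is a standard linear algebra fact, and the proof is essentially a two-line unpacking of definitions. The only conceptual point worth stating explicitly (for readers who may only recall the eigenvalue characterization of trace from the theorem statement) is that the sum of eigenvalues equals the sum of diagonal entries of any matrix representation, which follows either from Schur triangularization, from the coefficient of the second-highest power in the characteristic polynomial, or directly from similarity-invariance of the trace.
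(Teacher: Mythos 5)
Your proof is correct and follows essentially the same route as the paper's: both identify $\langle \varphi x_i, x_i\rangle$ as the diagonal entries of the matrix representation of $\varphi$ in the orthonormal basis and conclude by equating $\sum_i a_{ii}$ with $\trace(\varphi)$. The only difference is cosmetic --- you spell out the similarity-invariance of the trace (and its agreement with the sum of eigenvalues), which the paper simply takes as given in the sentence preceding the lemma.
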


\begin{proof}
    Let $\{x_i\}$ be any orthonormal basis of $X$, and $A$ the matrix representation of $\varphi$ with respect to $\{x_i\}$.
    In other words, for any $i$, $\varphi x_i = \sum_j A(i,j)x_j$.
    Since $\{x_i\}$ is orthonormal, it follows that $\langle \varphi x_i, x_j \rangle = A(i,j)$ for any $i$ and $j$.
    Thus, 
    \[\sum_i \langle \varphi x_i, x_i \rangle =
    \sum_i A(i,i) = \trace(A) = \trace (\varphi). \qedhere\]
\end{proof}

\begin{proof}[Proof of \cref{thm:foster}] 
    Recall from \cref{prop:properties of R operator} that the effective resistance operator \(\ERoperator_p = \partial_p^\adjoint (\partial_p^\adjoint)^+\) is the orthogonal projection of \(C_p(K)\) onto \(\image(\partial_p^\adjoint) 
    \).
    As an idempotent operator, $\ERoperator_p$ only has eigenvalues $0$ or $1$; consequently, the sum of its eigenvalues equals the rank of $\ERoperator_p$. From this, we deduce
    \[
    \sum_i \lambda_i(\ERoperator_p) = \rank(\ERoperator_p) = \rank(\partial_p^\adjoint)
    = n_{p-1} - \nullity(\partial_p^\adjoint),
    \]
    establishing \cref{eq:foster}.

    For the second statement, let \(\{\alpha_i\}\) be any orthonormal basis of \(C_p(K)\). 
    By \cref{lem:trace}, the trace of an operator equals the sum of the inner products 
    \(\langle \ERoperator_p \alpha_i, \alpha_i \rangle\) over this basis, which coincides with the sum of its eigenvalues. 
    Combining this with the previous argument yields the desired result.
\end{proof}

\begin{corollary}[Higher-dimensional Foster's theorem: matrix form]
    \label[corollary]{cor:foster}
    Let $K$ be a finite weighted simplicial complex and fix a degree $p\geq 0$.
    Let $B_{p}$ be the $p$-th boundary matrix. 
    For any $p$-simplex $\sigma$, let $w(\sigma)$ and $r_\sigma$ be the weight and effective resistance of $\sigma$, respectively.
    Then, 
    \begin{align*}\label{eq:foster_matrix}
    \sum_{\sigma \in K_p} w(\sigma) r_\sigma = \rank(B_p^\top) = n_{p-1} - \operatorname{null}(B_p^\top). 
    \end{align*} 
\end{corollary}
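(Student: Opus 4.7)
The plan is to derive the matrix form as a direct specialization of the operator-form Foster's theorem (\cref{thm:foster}), using the orthonormal basis of \(C_p(K)\) from \cref{def:orthonormal basis}, namely
\[
\widetilde{\Bcal}_p = \{\sqrt{w(\sigma)}\,\sigma : \sigma \in K_p\}.
\]
This choice is natural because normalizing the standard simplex basis extracts the weight factors \(w(\sigma)\) in exactly the form that appears on the left-hand side.

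First, I would apply the second statement of \cref{thm:foster} to \(\widetilde{\Bcal}_p\). Using bilinearity of the inner product in each argument, the contribution of \(\sqrt{w(\sigma)}\,\sigma\) unfolds as
\[
\big\langle \ERoperator_p\big(\sqrt{w(\sigma)}\,\sigma\big),\,\sqrt{w(\sigma)}\,\sigma\big\rangle_{C_p}
= w(\sigma)\,\big\langle \ERoperator_p(\sigma),\,\sigma\big\rangle_{C_p}
= w(\sigma)\,\Rcal_p(\sigma,\sigma)
= w(\sigma)\,r_\sigma,
\]
where the last two equalities come from the definition of \(\Rcal_p\) via \(\ERoperator_p\) and from \cref{def:ER_chains_cochains}. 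Summing over \(\sigma \in K_p\) yields exactly \(\sum_{\sigma \in K_p} w(\sigma) r_\sigma\) on the left-hand side of the desired identity.

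Next, I would identify the right-hand side. \Cref{thm:foster} gives \(\rank(\partial_p^\adjoint) = n_{p-1} - \nullity(\partial_p^\adjoint)\) as an operator-theoretic statement. To translate this into a statement about \(B_p^\top\), I would invoke the matrix representation \([\partial_p^\adjoint]^{\Bcal_p}_{\Bcal_{p-1}} = W_p B_p^\top W_{p-1}^{-1}\) from \cref{eq:adjoint of boundary matrix representation}. Since the weight matrices \(W_p\) and \(W_{p-1}\) are invertible, pre- and post-multiplication by them preserves rank and nullity, so \(\rank(\partial_p^\adjoint) = \rank(B_p^\top)\) and \(\nullity(\partial_p^\adjoint) = \nullity(B_p^\top)\).

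Combining the two parts yields the stated equality. There is no real obstacle here: the corollary is essentially a bookkeeping specialization of \cref{thm:foster}, with the only minor subtlety being the need to confirm that the weight-scaled matrix \(W_p B_p^\top W_{p-1}^{-1}\) has the same rank and nullity as \(B_p^\top\), which is immediate because the weight matrices are invertible diagonal matrices with strictly positive entries.
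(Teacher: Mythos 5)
Your proposal is correct and takes essentially the same route as the paper: both apply the second statement of \cref{thm:foster} to the orthonormal basis \(\{\sqrt{w(\sigma)}\,\sigma\}_{\sigma\in K_p}\) and identify each term with \(w(\sigma)\,r_\sigma\), your bilinearity computation being exactly the content of \cref{rmk:relative ER}, which the paper cites instead. Your explicit justification that \(\nullity(\partial_p^\adjoint)=\nullity(B_p^\top)\) via the invertible weight matrices in \cref{eq:adjoint of boundary matrix representation} is a detail the paper leaves implicit, but it is the same argument.
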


\begin{proof}
    By \cref{rmk:relative ER}, the quantity \(w(\sigma) r_\sigma\) equals the effective resistance of the normalized chain \(\sqrt{w(\sigma)}\,\sigma\).  
    Since the set \(\{\sqrt{w(\sigma)}\,\sigma\}_{\sigma \in K_p}\) forms an orthonormal basis of \(C_p(K)\), applying \cref{thm:foster} yields
    \[
    \sum_{\sigma \in K_p} w(\sigma) r_\sigma
      = \sum_{\sigma \in K_p} 
        \langle \ERoperator_p(\sqrt{w(\sigma)}\,\sigma), \sqrt{w(\sigma)}\,\sigma \rangle
      = n_{p-1} - \nullity(\partial_p^\adjoint)
      = n_{p-1} - \nullity(B_p^\top).
    \qedhere
    \]
\end{proof}

The graph version of Foster’s theorem can be equivalently expressed in terms of the rank and nullity of the incidence matrix \(B_1\) via the rank–nullity theorem:
\begin{align*}
    \sum_{e \in E} w(e) r_e 
    =\rank(B_1^\top)= |V| - \nullity(B_1^T).
\end{align*}
Under this reinterpretation, we observe that the classical Foster’s theorem arises as a special case of \cref{cor:foster} when the simplicial complex is a graph and \(p = 1\).

\begin{remark}
    Kook and Lee~\cite{KookLee2018} also provided a higher-dimensional analogue of Foster’s theorem in the setting of simplicial networks. Since their notion of effective resistance is a special case of ours (see \cref{sec:ER simplicial matrix-form}), their result follows as a special case of \cref{cor:foster}. Notably, our version does not require any assumptions on the homology of the simplicial complex in dimension \(d-1\), and it applies to simplices in all dimensions, not just those of maximal dimension.
\end{remark}

\section{Conclusion}
\label{sec:conclusion}

In this paper, we introduce a basis-free definition of effective resistance on the space of (co)chains of a simplicial complex via the effective resistance bilinear form, and prove  that previous matrix formulations of effective resistance on simplicial complexes can be formulated  within this framework with appropriate choices of basis. This formulation also satisfies an additional desirable property: it defines a pseudometric on the space of chains and a metric on the space of cycles,
extending the classical result that effective resistance is a metric on graphs. We further establish a generalized Foster’s theorem, relating the eigenvalues of the effective resistance operator to the rank of the adjoint boundary operator, and demonstrate its equivalence to earlier matrix-based formulations. 

Looking ahead, we envision that this basis-free formulation of effective resistance may serve as a foundation for future work on analysing the structural organization of simplicial complexes and developing machine learning methods defined on simplicial domains, where resistance  could inform notions of similarity, diffusion, or label  propagation across higher-order structures.

\section*{Acknowledgments}
\label{sec:acknowledgements}

The authors thank the Bernoulli Center at EPFL, Switzerland, for hosting the Third Workshop for Women in Computational Topology (July 17–21, 2023), where this work was initiated. The workshop was supported in part by the U.S. National Science Foundation (NSF) under Grant No. 2317401. This work was also partially supported by the NSF under Grant No. DMS-1929284 while the authors were in residence at the Institute for Computational and Experimental Research in Mathematics (ICERM) in Providence, RI, during the Collaborate@ICERM program \emph{Finding Geometric and Topological Cores of Higher Graphs}.
BW was partially supported by the U.S. Department of Energy (DOE) under Grant No. DE-SC0021015 and by the NSF under Grants No. IIS-2145499 and DMS-2301361.
CL carried out this research under the auspices of GNSAGA-INdAM.
SP was supported in part by the National Cancer Institute of the National Institutes of Health under Award Number U54CA272167. The content is solely the responsibility of the authors and does not necessarily represent the official views of the National Institutes of Health.
AS was supported in part by the Wallenberg AI, Autonomous Systems and Software Program (WASP), funded by the Knut and Alice Wallenberg Foundation.
LZ was supported by an AMS-Simons Travel Grant. 

\newpage
\appendix
\crefalias{section}{appendix}
\section{List of Notations}
\label{sec:notations}

\begin{adjustbox}{max width=\textwidth}
\begin{tabular}{ll}
\toprule
\rowcolor{gray!10} Adjoint and dual of linear operators& \\
$U, V$ & vector spaces over a field $\Bbbk$ (typically, $\Bbbk=\Rspace$))\\
$U^\dual$ & dual vector space\\ 
$f \colon U \to V$ & a linear operator\\
$f^\adjoint \colon V \to U$ & adjoint of a linear operator \\
$f^\dual \colon V^\dual \to U^\dual$ & dual of a linear operator\\
\midrule
$\Bcal_U, \Bcal_U$ & Bases of $U$ and $V$ respectively\\
$[f]^{\Bcal_V}_{\Bcal_U}$ & matrix representation of $f$ w.r.t. bases $\Bcal_U$ and $\Bcal_U$\\
\midrule
$\varphi: U \times V \to \Bbbk$ & a bilinear form\\
$[\varphi]_{\Bcal_U, \Bcal_V}$ & matrix representation of $\varphi$ w.r.t. bases $\Bcal_U$, $\Bcal_V$ and $\{1\}$\\
\midrule
\rowcolor{gray!10} Simplicial homology& \\
$K$ & a $d$-dimensional simplicial complex \\
$K_p$ & $p$-simplices in $K$\\
$n_p$ & number of $p$-simplices in $K$\\
$w_p \colon K_p \to \mathbb{R}_{>0}$& weight function on the $p$-simplices of $K$\\
$W_p$ & diagonal weight matrix\\
$C_p(K; \Bbbk)$ & $p$-chain group of $K$ with coefficients in $\Bbbk$\\
$C^p(K; \Bbbk)$ & $p$-cochain group of $K$ with coefficients in $\Bbbk$\\
$\partial_p \colon C_p(K, \Bbbk) \to C_{p-1}(K, \Bbbk)$ & boundary operators\\
$\delta_p \colon C^p(K, \Bbbk) \to C^{p+1}(K, \Bbbk)$ & coboundary operators\\
\midrule
$\mathcal{B}_p$ & standard basis in the space of chains\\
$\mathcal{B}^p$ & standard basis in the space of cochains\\
$\tilde{\mathcal{B}}_p$ & orthonormal basis in the space chains\\
$\tilde{\mathcal{B}}^p$ & orthonormal basis in the space of  cochains\\
\midrule
$\langle \cdot, \cdot\rangle_{C_p}$ & inner product endowed on the space of chains\\
$\langle \cdot, \cdot\rangle_{C^p}$ & inner product endowed on the space of cochains\\
$\flat_p \colon C_p(K) \to C^p(K)$ & flat isomorphism\\
$\sharp_p \colon C^p(K) \to C_p(K)$ & sharp isomorphism\\ 
\midrule
\rowcolor{gray!10} Laplacian operators & \\
$\cochainLup{p} \colon C^p(K) \to C^p(K)$ & up Laplacian operator on cochain spaces\\
$\cochainLdown{p} \colon C^p(K) \to C^p(K)$ & down Laplacian operator on cochain spaces\\
$\chainLup{p} \colon C_p(K) \to C_p(K)$ & up Laplacian operator on chain spaces\\
$\chainLdown{p} \colon C_p(K) \to C_p(K)$ & down Laplacian operator on chain spaces\\
\midrule
\rowcolor{gray!10} Effective resistance bilinear forms& \\
$\Rcal_p: C_p(K) \times C_p(K) \to \Rspace$ & effective resistance bilinear form on chain spaces\\
$\Rcal^p: C^p(K) \times C^p(K) \to \Rspace$ & effective resistance bilinear form on cochain spaces\\
\bottomrule
\end{tabular}
\end{adjustbox}
\section{Pseudoinverse Operators}
\label{sec:pseudoinverse}

We review the concept of the \emph{pseudoinverse} of a bounded linear operator between  Hilbert spaces~\cite{DesoerWhalen1963}. 
This notion first appeared in the work of Moore~\cite{Moore1920} and Penrose~\cite{Penrose1955, Penrose1956}, formulated in terms of matrices, thus it is called the \emph{Moore--Penrose inverse} or \emph{pseudoinverse matrix}. As we will see later in this section, the latter is a particular case of the former, more general definition.

We restrict our attention to finite-dimensional Hilbert spaces, where all linear operators are bounded and have a closed range.
Throughout this section, \(\varphi: X \to Y\) denotes a linear operator from a Hilbert space \((X, \langle\cdot, \, \cdot \rangle_X )\) to a Hilbert space \((Y, \langle\cdot, \, \cdot \rangle_Y)\). The \emph{adjoint operator} \(\varphi^*: Y \to X\) is defined as the operator satisfying, for all \(x \in X\) and \(y \in Y\),
\[\langle \varphi (x), y \rangle_Y = \langle x , \varphi^*(y)\rangle_X.\]
The adjoint operator uniquely exists (see~\cite[page 249]{Taylor1958}).
We recall the following property of the adjoint operator (see \cite[Theorem 5.1]{Lim2020} ):  
\begin{equation}
\label{eq:adjoint-image}
    \image(\varphi^\adjoint)=\image(\varphi^\adjoint\varphi) \quad \text{and} \quad \kernel(\varphi^*) = \kernel(\varphi \varphi^*).
\end{equation}

\begin{definition}[Pseudoinverse operator, \cite{DesoerWhalen1963}]
\label[definition]{def:pseudoinverse}
Let \(\varphi: X \to Y\) be a linear operator from a Hilbert space \((X, \langle\cdot, \, \cdot \rangle_X )\) to a Hilbert space \((Y, \langle\cdot, \, \cdot \rangle_Y)\).
The map \(\varphi^+: Y \to X\) is said to be a \emph{pseudoinverse} of \(\varphi\) if it satisfies the following three conditions: 
\begin{enumerate}[noitemsep]
    \item \(\varphi^+ \varphi(x) = x\) for all \(x \in \kernel (\varphi)^{\perp} = \image(\varphi^\adjoint)\);
    \item \(\varphi^+ (y) = 0\) for all \(y \in \image(\varphi)^\perp= \kernel(\varphi^\adjoint)\);
    \item If \(y_1 \in \image(\varphi)\) and \(y_2\in \kernel(\varphi^\adjoint)\), then \(\varphi^+ (y_1 + y_2) = \varphi^+y_1 + \varphi^+ y_2\).
\end{enumerate}  
\end{definition}

\begin{remark}[Alternative definition]\label[remark]{rmk:pseudoinverse 2nd definition}
    Let \( \varphi: X\to Y \) be a bounded linear operator between Hilbert spaces. The \emph{Moore--Penrose pseudoinverse} of \(\varphi\), denoted \(\varphi^+: Y\to X\), is the unique bounded linear operator satisfying:
    \[
    \varphi \varphi^+ \varphi = \varphi, \quad
    \varphi^+ \varphi \varphi^+ = \varphi^+, \quad
    (\varphi \varphi^+)^\adjoint = \varphi \varphi^+, \quad
    (\varphi^+ \varphi)^\adjoint = \varphi^+ \varphi.
    \]
\end{remark}

\begin{remark}[Uniqueness of the pseudoinverse]
\label[remark]{rmk:uniqueness_pseudoinverse}
According to \cref{def:pseudoinverse}, Condition 1 defines \(\varphi^+\) on \(\image(\varphi)\) and Condition 2 on \(\image(\varphi)^\perp\).
Uniqueness of the pseudoinverse of a linear operator then follows from the decomposition \(Y= \overline{\image(\varphi)} \oplus \overline{\image(\varphi)^\perp}\) and the fact that linear operators on finite-dimensional Hilbert spaces are bounded and have a closed range.\footnote{These conditions also ensure the uniqueness of the pseudoinverse for linear operators on infinite-dimensional spaces; see~\cite{Taylor1958} for more details.} 
\end{remark}

\begin{remark}[Existence of the pseudoinverse]
\label[remark]{rem:self-adj}
According to \cite[Theorem 1]{DesoerWhalen1963}, every operator $\varphi$ admits a pseudoinverse given by
\begin{equation}\label{eq:pseudoinverse_formula}
    \varphi^+ = (\varphi^* \varphi)^+ \varphi^\adjoint.
\end{equation}
This expression assumes the existence of a pseudoinverse for $\varphi^* \varphi$, which holds because $\varphi^* \varphi$ (as well as any self-adjoint operator) admits a pseudoinverse that can be constructed explicitly from the operator's spectral decomposition, as outlined below.

If \(\phi\) is self-adjoint, it admits a spectral representation
\[
\phi = \sum \lambda_i \phi_{E_i},
\]
where the sum is taken over all non-zero eigenvalues \(\lambda_i\) of \(\phi\), and \(\phi_{E_i}\) denotes the orthogonal projection onto the corresponding eigenspace \(E_i\).
Then, a direct computation shows that setting
\begin{equation} \label{eq:pseudoinverse of self-adjoint}
    \phi^+= \sum \lambda_i^{-1} \phi_{E_i}
\end{equation}
satisfies all the defining conditions of a pseudoinverse of \(\phi\).
\end{remark}

Directly from the definition of the pseudoinverse, we derive the following properties. Recall that a linear operator $\phi$ is an \emph{orthogonal projection} onto some subspace $V$ if and only if it is self-adjoint ($\phi^*=\phi$), idempotent ($\phi^2=\phi$) and has $V$ as its image. 

\begin{proposition}[Corollaries 1, 2, and 3 from \cite{DesoerWhalen1963}]
\label[proposition]{prop:pseudoinverse-properties}
    For \(\varphi: X\to Y\) a bounded linear operator and \(\varphi^+\) its pseudoinverse operator, we have:
    \begin{enumerate}[noitemsep]
        \item[(1)] \(\image(\varphi^+) = \image(\varphi^\adjoint)= \kernel(\varphi)^\perp\);
        \item[(2)] \(\kernel(\varphi^+) = \kernel(\varphi^\adjoint) = \image(\varphi)^\perp\);
        \item[(3)] \((\varphi^+)^+ = \varphi\);
        \item[(4)] \(\varphi^+ = \varphi^+ \varphi \varphi^+\) and \(\varphi = \varphi \varphi^+ \varphi\);
        \item[(5)] \(\varphi^+ \varphi\) is the orthogonal projection of \(X\) onto \(\kernel(\varphi)^\perp = \image(\varphi^\adjoint)\).
        Similarly, \(\varphi \varphi^+\) is the orthogonal projection of \(Y\) onto \(\kernel(\varphi^+)^\perp\).
        In particular, \(\varphi^+ \varphi\) and \(\varphi \varphi^+\) are self-adjoint and idempotent:
        \begin{itemize}[noitemsep]
            \item \((\varphi^+ \varphi)^\adjoint = \varphi^+ \varphi\) and \((\varphi \varphi^+)^\adjoint = \varphi \varphi^+\);
            \item \((\varphi \varphi^+)^2 = \varphi \varphi^+\) and \((\varphi^+\varphi)^2 = \varphi^+ \varphi\).
        \end{itemize}
    \end{enumerate}
\end{proposition}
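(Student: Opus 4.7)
The plan is to derive all five items directly from the three defining conditions in \cref{def:pseudoinverse}, relying only on the standard orthogonal decompositions $X = \kernel(\varphi) \oplus \image(\varphi^\adjoint)$ and $Y = \image(\varphi) \oplus \kernel(\varphi^\adjoint)$, which hold in our finite-dimensional setting. The key observation that powers everything else is: for any $x \in X$, writing $x = x_1 + x_2$ with $x_1 \in \kernel(\varphi)^\perp = \image(\varphi^\adjoint)$ and $x_2 \in \kernel(\varphi)$, we have $\varphi(x) = \varphi(x_1)$, and Condition 1 gives $\varphi^+\varphi(x) = \varphi^+\varphi(x_1) = x_1$. Hence $\varphi^+\varphi$ is the orthogonal projection onto $\kernel(\varphi)^\perp$, which already yields the first half of item (5).

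From there I would establish (1) and (2) in parallel. For (1), the equality $\image(\varphi^\adjoint)=\kernel(\varphi)^\perp$ is standard; for $\image(\varphi^+) = \kernel(\varphi)^\perp$, decompose any $y = y_1 + y_2$ with $y_1 \in \image(\varphi)$ and $y_2 \in \kernel(\varphi^\adjoint)$: Conditions 2 and 3 give $\varphi^+(y) = \varphi^+(y_1)$, and writing $y_1 = \varphi(x_1)$ with $x_1 \in \kernel(\varphi)^\perp$, the key observation yields $\varphi^+(y) = x_1 \in \kernel(\varphi)^\perp$; the reverse inclusion is immediate because $x_1 = \varphi^+\varphi(x_1) \in \image(\varphi^+)$. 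For (2), Condition 2 gives $\kernel(\varphi^\adjoint) \subseteq \kernel(\varphi^+)$; conversely, if $\varphi^+(y)=0$, the same decomposition forces $x_1 = 0$, hence $y = y_2 \in \kernel(\varphi^\adjoint)$.

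Items (3) and (4) then follow mechanically. For (3), I verify the three defining conditions of the pseudoinverse with the roles of $\varphi$ and $\varphi^+$ swapped: Condition 1 for $(\varphi^+)^+ = \varphi$ amounts to checking $\varphi\varphi^+(y) = y$ for $y \in \kernel(\varphi^+)^\perp = \image(\varphi)$, which is direct from writing $y = \varphi(x_1)$ with $x_1 \in \kernel(\varphi)^\perp$ and using $\varphi^+(y)=x_1$; Condition 2 follows from (1) since $\image(\varphi^+)^\perp = \kernel(\varphi)$; linearity is immediate. For (4), the key observation and $\image(\varphi^+) \subseteq \kernel(\varphi)^\perp$ give $\varphi^+\varphi\varphi^+(y) = \varphi^+(y)$, and similarly $\varphi\varphi^+\varphi(x) = \varphi(x_1) = \varphi(x)$.

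For (5), the first half was established at the outset. The second half follows by applying the key observation to $\varphi^+$ in place of $\varphi$ and using $(\varphi^+)^+ = \varphi$ from (3). Self-adjointness and idempotence of both $\varphi^+\varphi$ and $\varphi\varphi^+$ are then automatic properties of orthogonal projections onto closed subspaces. There is no genuine obstacle here; the only care required is bookkeeping the orthogonal decompositions on the correct side ($X$ versus $Y$) when invoking each of the three defining conditions.
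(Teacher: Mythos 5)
Your proof is correct. There is no internal argument in the paper to compare against: the proposition is quoted as Corollaries 1, 2, and 3 of Desoer--Whalen, and the paper's only gesture toward a proof is the sentence ``Directly from the definition of the pseudoinverse, we derive the following properties.'' Your derivation from the three conditions of \cref{def:pseudoinverse}, with the orthogonal decompositions \(X=\kernel(\varphi)\oplus\image(\varphi^\adjoint)\) and \(Y=\image(\varphi)\oplus\kernel(\varphi^\adjoint)\) as the engine, is exactly the route the cited source takes, and your ordering is sound: you need (1) and (2) before (3), since verifying condition (1) of the definition for \((\varphi^+)^+\) requires knowing \(\kernel(\varphi^+)^\perp=\image(\varphi)\), and condition (2) requires \(\image(\varphi^+)^\perp=\kernel(\varphi)\). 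The one step you should make explicit is the end of item (3): verifying that \(\varphi\) satisfies the three defining conditions shows only that \(\varphi\) \emph{is a} pseudoinverse of \(\varphi^+\); to conclude \((\varphi^+)^+=\varphi\) you must invoke uniqueness of the pseudoinverse, which the paper records in \cref{rmk:uniqueness_pseudoinverse} (and which itself uses the finite-dimensional closed-range setting). With that citation added, every step checks out, including the surjectivity of \(\varphi\) restricted to \(\kernel(\varphi)^\perp\) onto \(\image(\varphi)\) that underlies your ``key observation.''
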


\begin{proposition}\label[proposition]{prop:Lchain_Lcochain}
    Let \(\varphi: X \to X\) be a bounded linear operator and \(\psi: X \to Y\) an isomorphism that preserves inner products. 
    Then 
    \[
    \big(\psi \varphi \psi^{-1}\big)^+ = \psi \varphi^+ \psi^{-1}.
    \]
\end{proposition}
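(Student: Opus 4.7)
The plan is to verify that $\psi \varphi^+ \psi^{-1}$ satisfies the defining axioms of the Moore--Penrose pseudoinverse of $\psi \varphi \psi^{-1}$, as stated in \cref{rmk:pseudoinverse 2nd definition}, and then invoke uniqueness (\cref{rmk:uniqueness_pseudoinverse}).

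The first key observation I would record is that an inner-product-preserving isomorphism $\psi \colon X \to Y$ satisfies $\psi^\adjoint = \psi^{-1}$. Indeed, for all $x \in X$ and $y \in Y$, writing $y = \psi x'$ for a unique $x' \in X$, one has $\langle \psi x, y\rangle_Y = \langle \psi x, \psi x'\rangle_Y = \langle x, x'\rangle_X = \langle x, \psi^{-1} y\rangle_X$, so $\psi^\adjoint = \psi^{-1}$. By an identical argument $(\psi^{-1})^\adjoint = \psi$.

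Next, setting $\Phi \coloneqq \psi \varphi \psi^{-1}$ and $\Psi \coloneqq \psi \varphi^+ \psi^{-1}$, I would check the four Moore--Penrose conditions one by one, using \cref{prop:pseudoinverse-properties} for $\varphi$ and the identity $\psi^\adjoint = \psi^{-1}$ throughout:
\begin{align*}
    \Phi \Psi \Phi &= \psi \varphi \varphi^+ \varphi \psi^{-1} = \psi \varphi \psi^{-1} = \Phi,\\
    \Psi \Phi \Psi &= \psi \varphi^+ \varphi \varphi^+ \psi^{-1} = \psi \varphi^+ \psi^{-1} = \Psi,\\
    (\Phi \Psi)^\adjoint &= (\psi \varphi \varphi^+ \psi^{-1})^\adjoint = (\psi^{-1})^\adjoint (\varphi \varphi^+)^\adjoint \psi^\adjoint = \psi (\varphi \varphi^+) \psi^{-1} = \Phi \Psi,\\
    (\Psi \Phi)^\adjoint &= (\psi \varphi^+ \varphi \psi^{-1})^\adjoint = \psi (\varphi^+ \varphi) \psi^{-1} = \Psi \Phi.
\end{align*}
By uniqueness of the pseudoinverse, $\Psi = \Phi^+$, which gives the claim.

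I do not anticipate a real obstacle here: the argument is essentially bookkeeping once $\psi^\adjoint = \psi^{-1}$ is in hand. The only subtle point worth emphasising is that this identity is precisely where the hypothesis that $\psi$ preserves inner products (as opposed to being an arbitrary isomorphism) enters; without it, the two self-adjointness conditions would fail. One could alternatively verify the three conditions in \cref{def:pseudoinverse} using that $\psi$ maps $\ker(\varphi)$, $\image(\varphi)$, and their orthogonal complements bijectively onto the corresponding subspaces for $\Phi$, but the Moore--Penrose route above is the most direct.
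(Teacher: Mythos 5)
Your proposal is correct and coincides with the paper's own argument: the paper first checks the three conditions of \cref{def:pseudoinverse} directly, but then gives exactly your verification of the four Moore--Penrose identities from \cref{rmk:pseudoinverse 2nd definition} as an alternative proof. If anything, your write-up is slightly more careful, since you explicitly justify the adjoint steps via the identity \(\psi^\adjoint = \psi^{-1}\) (the precise point where inner-product preservation is used), which the paper leaves implicit.
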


\begin{proof}
    Let \(\Phi := \psi \varphi \psi^{-1}\) and define \(\Phi' := \psi \varphi^+ \psi^{-1}\). We will verify that \(\Phi'\) satisfies the three conditions of Definition~\ref{def:pseudoinverse} for the pseudoinverse of \(\Phi\).
    First, we have
    \[
    \Phi' \Phi = \psi \varphi^+ \psi^{-1} \psi \varphi \psi^{-1} = \psi \varphi^+ \varphi \psi^{-1}.
    \]

    Take \(x \in \ker(\varphi)^\perp = \image(\varphi^\adjoint)\) and let \(y = \psi(x)\). Then, because $\varphi^+ \varphi$ is an orthogonal projection onto $\kernel (\varphi)^{\perp}\ni x$, we have
    \[
    \Phi' \Phi(y) = \psi \varphi^+ \varphi \psi^{-1}\psi(x) 
    = \psi (\varphi^+ \varphi (x))
    = \psi(x) = y,
    \]
    so \(\Phi' \Phi(y) = y\) for all \(y \in \ker(\Phi)^\perp = \image(\Phi^\adjoint)\). Hence, condition (1) is satisfied.

    Take \(y \in \image(\Phi)^\perp = \ker(\Phi^\adjoint)\) and let $x = \psi^{-1}(y)$. Then we have \(y = \psi(x)\) and 
    \[\varphi^\adjoint(x)
    = \varphi^\adjoint\psi^{-1}(y)
    = \psi^{-1}\psi\varphi^\adjoint\psi^{-1}(y)
    = \psi^{-1}\Phi^\adjoint(y) = 0
    \implies x \in \ker(\varphi^\adjoint) = \ker(\varphi^+).\] 
    Then
    \[
    \Phi'(y) = \psi \varphi^+ \psi^{-1}(y) = \psi \varphi^+(x) = \psi(0) = 0.
    \]
    Thus, condition (2) is satisfied.

Let \(y_1 \in \image(\Phi)\) and \(y_2 \in \ker(\Phi^\adjoint)\). Then \(y_1 = \psi(x_1)\) for \(x_1 \in \image(\varphi)\) and \(y_2 = \psi(x_2)\) for \(x_2 \in \ker(\varphi^\adjoint)\). Then
    \[
    \Phi'(y_1 + y_2) = \psi \varphi^+ \psi^{-1}(y_1 + y_2) = \psi \varphi^+(x_1 + x_2) = \psi \left( \varphi^+(x_1) + \varphi^+(x_2) \right).
    \]
    By the definition of pseudoinverse, \(\varphi^+(x_2) = 0\), and \(\varphi^+\) is linear on \(\image(\varphi)\). So
    \[
    \Phi'(y_1 + y_2) = \psi \varphi^+(x_1) = \psi \varphi^+(x_1) + \psi \varphi^+(x_2) = \Phi'(y_1) + \Phi'(y_2).
    \]
    Hence, condition (3) is satisfied.

    Therefore, \(\Phi' = \psi \varphi^+ \psi^{-1}\) is the pseudoinverse of \(\Phi = \psi \varphi \psi^{-1}\).\\

Alternatively, we verify that \(\Phi'\) satisfies the four Moore--Penrose conditions from \cref{rmk:pseudoinverse 2nd definition}:
\begin{align*}
\Phi \Phi' \Phi &= \psi \varphi \psi^{-1} \psi \varphi^+ \psi^{-1} \psi \varphi \psi^{-1} 
= \psi \varphi \varphi^+ \varphi \psi^{-1} = \psi \varphi \psi^{-1} = \Phi, \\
\Phi' \Phi \Phi' &= \psi \varphi^+ \varphi \varphi^+ \psi^{-1} = \psi \varphi^+ \psi^{-1} = \Phi', \\
(\Phi \Phi')^\adjoint &= (\psi \varphi \varphi^+ \psi^{-1})^\adjoint = \psi \varphi \varphi^+ \psi^{-1} = \Phi \Phi', \\
(\Phi' \Phi)^\adjoint &= (\psi \varphi^+ \varphi \psi^{-1})^\adjoint = \psi \varphi^+ \varphi \psi^{-1} = \Phi' \Phi.
\end{align*}
Thus, \(\Phi'\) satisfies the Moore--Penrose pseudoinverse conditions as well.
\end{proof}

The pseudoinverse of an operator generalizes the concept of the matrix pseudoinverse. Below, we review the definition of the matrix pseudoinverse and explain the connection. 

\begin{definition}[Matrix pseudoinverse]
For a matrix $A \in \Rspace^{m \times n}$, the \emph{Moore--Penrose inverse} of $A$ is defined as a matrix $A^{+} \in \Rspace^{n \times m}$ satisfying four criteria: 
\begin{align}\label{eq:psuedoinv-eq}
A A^{+} A = A, \;
A^{+} A A^{+} = A^{+},\;
\left(AA^{+}\right)^\top =AA^{+}, \;
\left(A^{+}A\right)^\top =A^{+}A, 
\end{align}
In particular, $AA^{+}$ and $A^{+}A$ are symmetric. 
\end{definition}

Criteria from \cref{eq:psuedoinv-eq} are equivalent to the properties in \cref{prop:pseudoinverse-properties} (4) and (5). 
Penrose \cite{Penrose1955} proved that \cref{eq:psuedoinv-eq} has a unique solution which he defined as the pseudoinverse. The \emph{Gram matrix} of a basis \(\mathcal{B} = \{b_1, b_2, \ldots, b_n\}\) of a finite-dimensional Hilbert space \((X, \langle \cdot, \cdot \rangle_X)\) is the matrix \(G_X \in \Rspace^{n \times n}\) defined by \(G_X(i,j) = \langle b_i, b_j \rangle_X\). 

\begin{proposition}
\label[proposition]{prop:pseudo_inverse_matrix_from_op}
    Let \(\mathcal{B}_X\) and \(\mathcal{B}_Y\) be orthogonal bases of Hilbert spaces \(X\) and \(Y\) with Gram matrices \(G_X\) and \(G_Y\), respectively. Let \( \varphi: X \to Y \) be a linear operator with matrix representation \(A\) in these bases. Then the matrix representation of the pseudoinverse \( \varphi^+ \) is given by  
    \begin{equation}\label{eq:pseudo_inverse_matrix_repr}
        [\varphi^+]_{\mathcal{B}_Y}^{\mathcal{B}_X} = G_X^{-1/2} \left(G_Y^{1/2} A G_X^{-1/2}\right)^+ G_Y^{1/2}.
    \end{equation}
\end{proposition}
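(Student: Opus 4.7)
The plan is to reduce the statement to the orthonormal-basis case, where the matrix of the pseudoinverse of an operator coincides with the matrix pseudoinverse, and then transport the identity back to the original orthogonal bases by a change-of-basis computation.

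First, I would normalize each basis. Let $\widetilde{\Bcal}_X$ and $\widetilde{\Bcal}_Y$ denote the orthonormal bases obtained by rescaling the elements of $\Bcal_X$ and $\Bcal_Y$ by the inverse square roots of their self-inner-products; since $\Bcal_X$ is orthogonal, $G_X$ is diagonal with positive entries, so $G_X^{1/2}$ is well defined, and similarly for $G_Y$. A direct coordinate computation (analogous to the one in \cref{sec:matrix-representations}) shows that the change-of-basis matrices satisfy
\[
[\mathrm{Id}]_{\widetilde{\Bcal}_X}^{\Bcal_X} = G_X^{-1/2}, \qquad [\mathrm{Id}]_{\Bcal_X}^{\widetilde{\Bcal}_X} = G_X^{1/2},
\]
with the analogous formulas for $Y$. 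Consequently, the matrix $\widetilde{A}$ of $\varphi$ with respect to the orthonormal bases is
\[
\widetilde{A} = [\varphi]_{\widetilde{\Bcal}_X}^{\widetilde{\Bcal}_Y} = [\mathrm{Id}]_{\Bcal_Y}^{\widetilde{\Bcal}_Y} \,A\, [\mathrm{Id}]_{\widetilde{\Bcal}_X}^{\Bcal_X} = G_Y^{1/2}\, A\, G_X^{-1/2}.
\]

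Next, I would argue that in orthonormal bases the matrix representation of $\varphi^+$ coincides with the matrix pseudoinverse of $\widetilde{A}$. The key fact is that with respect to orthonormal bases, the matrix of the adjoint of any linear operator equals the transpose of its matrix (this is a special case of \cref{eq:adjoint-matrix-expression} with Gram matrices equal to the identity). Therefore, the four Moore--Penrose identities satisfied by $\varphi$ and $\varphi^+$ (from \cref{rmk:pseudoinverse 2nd definition}) translate verbatim into the four matrix Moore--Penrose identities relating $\widetilde{A}$ and $[\varphi^+]_{\widetilde{\Bcal}_Y}^{\widetilde{\Bcal}_X}$. By uniqueness of the matrix pseudoinverse, this forces
\[
[\varphi^+]_{\widetilde{\Bcal}_Y}^{\widetilde{\Bcal}_X} = \widetilde{A}^+ = \bigl(G_Y^{1/2} A G_X^{-1/2}\bigr)^+.
\]

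Finally, transporting back to the original orthogonal bases yields
\[
[\varphi^+]_{\Bcal_Y}^{\Bcal_X} = [\mathrm{Id}]_{\widetilde{\Bcal}_X}^{\Bcal_X}\, [\varphi^+]_{\widetilde{\Bcal}_Y}^{\widetilde{\Bcal}_X}\, [\mathrm{Id}]_{\Bcal_Y}^{\widetilde{\Bcal}_Y} = G_X^{-1/2} \bigl(G_Y^{1/2} A G_X^{-1/2}\bigr)^+ G_Y^{1/2},
\]
which is the desired formula. The only nontrivial point in the argument is the identification in the orthonormal setting; that step is essentially the observation that Moore--Penrose conditions are transposition-symmetric, combined with the uniqueness statement recalled in \cref{rmk:uniqueness_pseudoinverse}. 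Everything else is bookkeeping with change-of-basis matrices, which is straightforward because $\Bcal_X$ and $\Bcal_Y$ are assumed orthogonal (so $G_X$ and $G_Y$ are diagonal and their square roots are unambiguous).
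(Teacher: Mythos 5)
Your proof is correct and follows essentially the same route as the paper's: normalize the orthogonal bases, observe that the matrix of \(\varphi\) in the resulting orthonormal bases is \(G_Y^{1/2} A G_X^{-1/2}\), identify the operator pseudoinverse with the matrix pseudoinverse in the orthonormal setting (which you justify slightly more explicitly, via the transpose--adjoint correspondence and Moore--Penrose uniqueness, where the paper simply asserts it), and change basis back. One small citation quibble: the uniqueness invoked in the orthonormal step is that of the \emph{matrix} Moore--Penrose inverse (stated after \cref{eq:psuedoinv-eq}) rather than the operator uniqueness of \cref{rmk:uniqueness_pseudoinverse}, though the two are interchangeable here since orthonormal coordinates realize the matrix as an operator on Euclidean space with the standard inner product.
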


\begin{proof}
    When \(\mathcal{B}_X\) and \(\mathcal{B}_Y\) are orthonormal, \(G_X = G_Y = I\), reducing \cref{eq:pseudo_inverse_matrix_repr} to \([\varphi^+]_{\mathcal{B}_Y}^{\mathcal{B}_X} = A^+\), which holds since \(A^+\) induces a linear operator satisfying the pseudoinverse conditions for \(\varphi\). 
    
    For general orthogonal bases, let 
    \(\widetilde{\mathcal{B}}_X = \{\tilde{x}_i\}\) and 
    \(\widetilde{\mathcal{B}}_Y = \{\tilde{y}_j\}\)
    be the normalized (orthonormal) bases defined by
    \(
    \tilde{x}_i = x_i/\|x_i\| \) and \(\tilde{y}_j = y_j/ \|y_j\|
    \). 
        The matrix representation of \(\varphi\) in these normalized bases is 
    \([\varphi]_{\tilde{\mathcal{B}}_X}^{\tilde{\mathcal{B}}_Y} = G_Y^{1/2} A G_X^{-1/2},\) and by the orthonormal case, the pseudoinverse \(\varphi^+\) has the matrix representation
    \[[\varphi^+]_{\tilde{\mathcal{B}}_Y}^{\tilde{\mathcal{B}}_X} = \left(G_Y^{1/2} A G_X^{-1/2}\right)^+.\] 
    
    Changing back to the original (orthogonal) bases 
    yields
    \[
    [\varphi^+]_{\mathcal{B}_Y}^{\mathcal{B}_X}
        = G_X^{-1/2}\!\left(G_Y^{1/2} A G_X^{-1/2}\right)^{\!+}\! G_Y^{1/2},
    \]
    as claimed.
\end{proof}

\bibliographystyle{plain}
\bibliography{refs-resistance.bib}

\end{document}